\documentclass[smallextended,natbib,runningheads]{svjour3}
\journalname{Annals of the Institute of Statistical Mathematics}
\smartqed



\usepackage[utf8]{inputenc}
\usepackage{amssymb}
\usepackage{amsmath}
\usepackage{url}
\usepackage{graphicx}
\usepackage{color}
\usepackage[colorlinks=true,linkcolor=blue,citecolor=blue,pdfborder={0 0 0}]{hyperref}
\usepackage{bm}
\usepackage[]{mdframed}
\usepackage{hyperref}
\usepackage{enumerate}
\usepackage{subcaption}
\usepackage[normalem]{ulem}
\usepackage{tikz-cd}
\usepackage{mathdots}


\newcommand{\change}[1]{{\color{black}{#1}}}

\newcommand{\N}{\mathbb{N}}
\newcommand{\R}{\mathbb{R}}
\newcommand{\Bc}{\mathcal{B}}

\newcommand{\Mc}{\mathcal{M}}
\newcommand{\Nc}{\mathcal{N}}
\newcommand{\Sc}{\mathcal{S}}
\newcommand{\Xc}{\mathcal{X}}
\newcommand{\Yc}{\mathcal{Y}}
\newcommand{\Hc}{\mathcal{H}}
\newcommand{\Ic}{\mathcal{I}}

\newcommand{\1}{\mathbf{1}}
\renewcommand{\Pr}{\mathbb{P}}

\newcommand{\supp}{\operatorname{supp}}
\newcommand{\un}[1]{\underline{#1}}
\newcommand{\ol}[1]{\overline{#1}}
\newcommand{\vect}{\mathrm{vec}}
\newcommand{\phidivergence}{$\phi$\nobreakdash-di\-ver\-gen\-ce}
\newcommand{\phidivergences}{$\phi$\nobreakdash-di\-ver\-gen\-ces}

\DeclareMathOperator*{\arginf}{arg\,inf}
\DeclareMathOperator*{\diag}{diag}
\DeclareMathOperator*{\im}{Im}

\let\emptyset\varnothing

\newtheorem{prop}{Proposition}
\newtheorem{defn}[prop]{Definition}
\newtheorem{thm}[prop]{Theorem}
\newtheorem{cor}[prop]{Corollary}
\newtheorem{lem}[prop]{Lemma}
\newtheorem{cond}[prop]{Condition}



\spnewtheorem*{xproof}{}{\itshape}{\rmfamily}

\renewenvironment{proof}[1][\proofname]
{\xproof}
{\endxproof}

\hyphenation{non-empty}
\hyphenation{non-negative}


\begin{document}

\title{On the differentiability of $\phi$-projections in the discrete finite case}

\author{Gery Geenens \and Ivan Kojadinovic \and Tommaso Martini}

\institute{
Gery Geenens \at
School of Mathematics and Statistics, UNSW Sydney, Australia \\
\email{ggeenens@unsw.edu.au}  
\and
Ivan Kojadinovic \at
Laboratoire de math\'ematiques et de leurs applications, CNRS / Universit\'e de Pau et des Pays de l'Adour, France \\
\email{ivan.kojadinovic@univ-pau.fr}    
\and
Tommaso Martini \at
Laboratoire de math\'ematiques et de leurs applications, CNRS / Universit\'e de Pau et des Pays de l'Adour, France, and Department of Mathematics G.\ Peano, Università degli Studi di Torino, Italy \\
\email{tommaso.martini@unito.it}
}

\date{} 

\maketitle

\begin{abstract}
  In the case of finite measures on finite spaces, we state conditions under which $\phi$-projections are continuously differentiable. When the set on which one wishes to $\phi$-project is convex, we show that the required assumptions are implied by easily verifiable conditions. In particular, for input probability vectors and a rather large class of \phidivergences, we obtain that $\phi$-projections are continuously differentiable when projecting on a set defined by linear equalities. The obtained results are applied to $\phi$-projection estimators (that is, minimum \phidivergence\ estimators). \change{A first application, rooted in robust statistics, concerns the computation of the influence functions of such estimators. In a second set of applications, we derive their asymptotics} when projecting on parametric sets of probability vectors, on sets of probability vectors generated from distributions with certain moments fixed and on Fréchet classes of bivariate probability arrays. The resulting asymptotics hold whether the element to be $\phi$-projected belongs to the set on which one wishes to $\phi$-project or not.
  
  \keywords{asymptotics \and $\phi$-projection / minimum \phidivergence\ estimators \and continuity and continuous differentiability of $\phi$-projections \and \change{influence function} \and moment problem \and parametric estimation \and $\phi$-projections on Fréchet classes \and strong convexity of \phidivergences.}
\end{abstract}


\section{Introduction}

The concept of $I$-projection due to Csisz\'ar \citep[see, e.g.,][]{Csi75, CsiShi04} plays a crucial role in many areas of probability and statistics. Informally, given a probability distribution $q_0$ of interest and a set $\Mc$ of probability distributions, it consists of finding an element of $\Mc$, if it exists, that is the ``closest'' to $q_0$ in the sense of the Kullback--Leibler (or information) divergence. A well-known class of alternatives to the Kullback--Leibler divergence (containing the latter) are the so-called \phidivergences\ \citep[see, e.g.,][and the references therein]{AliSil66,Csi67,LieVaj87,CsiShi04} and $\phi$-projections are merely the analogs of $I$-projections based on \phidivergences.

Restricting attention to (probability) measures on finite spaces, the main purpose of this work is to provide conditions under which $\phi$-projections are continuously differentiable with respect to the input distribution. From the point of view of statistical inference, such results allow one to immediately obtain \change{the influence function,} the consistency and the limiting distribution of a $\phi$-projection estimator (also known as minimum \phidivergence\ estimator in the context of parametric inference). \change{For instance}, assume that $q_n$ is a consistent estimator of a probability vector $q_0$ of interest and that the latter can be uniquely $\phi$-projected on a set $\Mc$ of probability vectors of interest. Then, under conditions that imply the continuity of $\phi$-projections, the consistency of the $\phi$-projection of $q_n$ on $\Mc$ is merely a consequence of the continuous mapping theorem and, under conditions that imply the continuous differentiability of $\phi$-projections, the limiting distribution of a properly scaled version of the $\phi$-projection of $q_n$ on $\Mc$ immediately follows from the delta method.

In the considered discrete finite setting, first differentiability results were obtained for $I$-projections on Fréchet classes of bivariate probability arrays by \citet[Section~2.4]{KojMar24} by exploiting, among other things, continuity results for $\phi$-projections on convex sets obtained by \cite{GieRef13,GieRef17}. When the set $\Mc$ of probability vectors on which one wishes to $\phi$-project is not necessarily convex, related results (that are not differentiability results \emph{per se}) were obtained by \citet[Section~2]{JimPinAlbMor11}. When $\Mc$ is a set of probability vectors obtained from a given parametric distribution and $q_0$ is assumed to belong to $\Mc$, the previous results entail the well-known asymptotics of minimum \phidivergence\ estimators \citep[see, e.g.,][and the reference therein]{ReaCre88, MorParVaj95, BasShiPar11}. Since the results of \cite{JimPinAlbMor11} were actually derived regardless of whether $q_0$ belongs to $\Mc$ or not (that is, also potentially under model misspecification), they open the path to the development of many interesting inference procedures related to goodness-of-fit testing and model selection as discussed, for example, in Sections~3 and 4 of \cite{JimPinAlbMor11} (see also the references therein).

The specific aim of this work is to attempt to unify and extend those previous results. Following \cite{GieRef13,GieRef17}, we shall deal with finite measures on finite spaces (and thus not only with probability measures on finite spaces). Then, under a slight strengthening of the main condition considered in \cite{JimPinAlbMor11} and under a key additional assumption missing from that reference (see Remark~\ref{rem:invertibility} in Section~\ref{sec:param}), we shall first state a general result on the differentiability of $\phi$-projections. Under an additional assumption of convexity of the set~$\Mc$, we shall show that the above conditions are implied by simpler ones particularly amenable to verification. For example, we obtain that that for many common choices of \phidivergences, $\phi$-projections are automatically continuously differentiable when~$\Mc$ is a subset defined by linear equalities. Our proofs rely, among others, on results of \cite{GieRef17} and \cite{Rus87}, as well as on the fact that \phidivergences\, constructed from strongly convex functions $\phi$ (in a certain sense), are strongly convex in their first argument.

Note that when one wishes to study the asymptotics of $\phi$-projection and/or minimum \phidivergence\ estimators outside of the finite discrete setup, another line of research consists of using the dual representation of \phidivergences\, \cite[see, e.g.,][]{Kez03, BroKez06, BroKez09}. \change{In the continuous case, this way of proceeding has been successfully used for estimating semi-parametric models  \citep{BroKez10} and for studying the robustness of minimum \phidivergence\ estimators \citep{TomBro11}. As noted by a reviewer, such investigations could also be based on the theory of constrained M-estimators \cite[see, e.g.,][]{Gey94}. We do not elaborate further on these more complex approaches as they do not seem necessary in the considered discrete finite setting, particularly because, as it will become clearer from Section~\ref{sec:framework},  our results are developed within a parametric framework analog to the one adopted, for instance, in \cite{JimPinAlbMor11}}.

The paper is organized as follows. In the first section, we define \phidivergences\, for finite measures on finite spaces, recall their main properties stated in \cite{GieRef17}, provide conditions under which they are strongly convex in their first argument and define $\phi$-projections. Next, we state conditions under which $\phi$-projections are continuous and continuously differentiable and we show that these can be replaced by substantially simpler conditions when the set $\Mc$ on which one wishes to $\phi$-project is convex. Finally, we illustrate how the \change{obtained results can be used to compute the influence functions of minimum \phidivergence\ estimators and derive their asymptotics} when projecting on parametric sets of probability vectors, on sets of probability vectors generated from distributions with certain moments fixed and on Fréchet classes of bivariate probability arrays.

\section{Preliminaries on \phidivergences\, and $\phi$-projections}
\label{sec:prelim}

\subsection{Notation}

Let $m \in \N^+$ be fixed and let $\Xc = \{x_1,\dots,x_m\}$ be a set of interest. We shall study \phidivergences\, and $\phi$-projections for finite measures on $(\Xc,2^\Xc)$. Clearly, any finite measure $\mu$ on $(\Xc,2^\Xc)$ is discrete and can be represented by a collection of $m$ point masses since
$$
\mu(A) = \sum_{x_i \in A} \mu(\{x_i\}), \qquad A \subset \{x_1,\dots,x_m\}.
$$
As shall become clear from the next subsections, the forthcoming developments depend only on $m = |\Xc|$ and the values that finite measures on $\Xc$ take on subsets of $\Xc$, that is, on the singletons $\{x_1\}$, \dots, $\{x_m\}$. For that reason, in the context of this work, a finite measure $\mu$ on $\Xc$ will be simply represented by a vector $s \in [0,\infty)^m$ defined by $s_i = \mu(\{x_i\})$, $i \in \{1,\dots,m\}$. We will use the letters $s,t$ for denoting such point mass vectors in the case of general finite measures, and reserve the letters $p,q$ for point mass vectors of probability measures, that is, whose components sum up to one -- these will be simply referred to as probability vectors in the rest of this work. We define the support of any vector $s \in [0,\infty)^m$ as $\supp(s) = \{ i \in \{1,\dots,m\} : s_i > 0 \}$. Finally, for any set $S \subset [0,\infty)^m$ and any $\Ic \subset \{1,\dots,m\}$, $S_\Ic = \{s \in S : \supp(s) \subset \mathcal{I} \}$. For instance, given $t \in [0,\infty)^m$, $[0,\infty)^m_{\supp(t)}$ denotes the set $\{s \in [0,\infty)^m : \supp(s) \subset \supp(t) \}$, that is, the set of nonnegative vectors whose support is included in that of $t$.

\subsection{\phidivergences\, for finite measures on finite spaces}

Let $\phi:[0,\infty) \to \R$ be a convex function that is continuous at 0. Then, let $f:[0,\infty)^2 \to \R \cup \{\infty\}$ be the function defined by
\begin{equation}
  \label{eq:f}
  f(v,w) =
  \begin{cases}
    \displaystyle w \phi \left( \frac{v}{w} \right) & \text{if } v \in [0,\infty), \, w \in (0,\infty), \\
    \\
    \displaystyle v \lim_{x \to \infty} \frac{\phi(x)}{x} & \text{if } v \in [0,\infty), \, w = 0, \\
    \\
    0 &  \text{if } v = 0, \, w = 0.
  \end{cases}
\end{equation}
Note that, by Proposition A.24 of \cite{LieVaj87}, $\lim_{x \to \infty} \phi(x)/x$ exists in $\R \cup \{\infty\}$ and that, by Proposition A.35 in the same reference, the function $f$ is lower semicontinuous on $[0,\infty)^2$. Following \citet[Section 2]{GieRef17}, it can further be verified that, for any fixed $v \geq 0$, $f(v,\cdot)$ is continuous.

The following additional condition on $\phi$ is common in the literature.

\begin{cond}[Strict convexity of $\phi$]
  \label{cond:phi:strict:convex}
  The function $\phi$ is strictly convex.
\end{cond}

It is easy to verify that, for any fixed $w > 0$, $f(\cdot,w)$ is strictly convex if and only if Condition~\ref{cond:phi:strict:convex} holds.

\begin{defn}[\phidivergence]
For any $s,t \in [0,\infty)^m$, the \phidivergence\ of $s$ relative to $t$ is defined by
\begin{equation}
  \label{eq:phi:div}
D_\phi(s \mid t) = \sum_{i=1}^m f(s_i,t_i).
\end{equation}
\end{defn}

It can verified from~\eqref{eq:f} that, if $\supp(s) \subset \supp(t)$, $D_\phi(s \mid t) < \infty$. Note that because $s$ and $t$ are not necessarily probability vectors, $D_\phi(s \mid t)$ can be negative (take for instance $\phi(x) = x \log(x)$, $x \in (0,\infty)$, $\phi(0) = 0$ and $s_i \leq t_i$, $i \in \{1,\dots,m\}$, with at least one $s_i < t_i$). Furthermore, the following properties \cite[see][Theorem 2.1]{GieRef17} hold as a consequence of the properties of the function $f$ in~\eqref{eq:f} discussed above.

\begin{prop}
  \label{prop:properties:D:phi}
  The following statements hold:
  \begin{enumerate}[(i)]
  \item The function $D_\phi$ is lower semicontinuous on $[0,\infty)^m \times [0,\infty)^m$.
  \item For fixed $s \in [0,\infty)^m$, $D_\phi(s \mid \cdot)$ is continuous on $[0,\infty)^m$.
  \item For fixed $t \in [0,\infty)^m$, $t \neq 0_{\R^m}$, $D_\phi(\cdot \mid t)$ is strictly convex on $[0,\infty)^m_{\supp(t)}$ if and only if Condition~\ref{cond:phi:strict:convex} holds.
\end{enumerate}
\end{prop}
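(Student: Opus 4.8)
The plan is to exploit the additive, coordinatewise structure $D_\phi(s\mid t)=\sum_{i=1}^m f(s_i,t_i)$ and to deduce each of the three properties from the corresponding property of the single scalar building block $f$ recalled above. In every case one composes $f$ with the continuous coordinate projections $(s,t)\mapsto(s_i,t_i)$ and then uses that the relevant property is preserved under finite sums; the only recurring point requiring care is that the sum be well defined, which holds here because $f$ never takes the value $-\infty$.

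For (i), each map $(s,t)\mapsto f(s_i,t_i)$ is the composition of the lower semicontinuous function $f$ with a continuous (linear) projection, hence is itself lower semicontinuous with values in $\R\cup\{\infty\}$. A finite sum of lower semicontinuous $\R\cup\{\infty\}$-valued functions, none of which attains $-\infty$, is again lower semicontinuous, which gives (i). For (ii), fix $s$; then $t\mapsto f(s_i,t_i)$ is the composition of the continuous function $f(s_i,\cdot)$ with the continuous projection $t\mapsto t_i$, hence continuous as an $\R\cup\{\infty\}$-valued map, and a finite sum of such continuous functions (again free of any $\infty-\infty$ indeterminacy) is continuous, giving (ii).

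The substance of the proposition is (iii), which I would establish as two implications. For the ``if'' part, assume Condition~\ref{cond:phi:strict:convex}. On $[0,\infty)^m_{\supp(t)}$ every vector $s$ satisfies $s_j=0$ for $j\notin\supp(t)$, so the terms with $j\notin\supp(t)$ equal $f(0,0)=0$ and one has
\[
D_\phi(s\mid t)=\sum_{i\in\supp(t)} f(s_i,t_i)=\sum_{i\in\supp(t)} t_i\,\phi\!\left(\frac{s_i}{t_i}\right),
\]
a sum over indices with $t_i>0$. By the recalled equivalence, each summand $s_i\mapsto f(s_i,t_i)$ is strictly convex in the single variable $s_i$. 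I would then invoke the elementary fact that a finite sum of functions, each strictly convex in its own coordinate, is jointly strictly convex: given distinct $s,s'\in[0,\infty)^m_{\supp(t)}$ and $\lambda\in(0,1)$, any coordinate in which they differ must belong to $\supp(t)$, so adding the strict convexity inequality in that coordinate to the ordinary convexity inequalities in the remaining coordinates yields a strict inequality for $D_\phi(\cdot\mid t)$.

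For the ``only if'' part, assume $D_\phi(\cdot\mid t)$ is strictly convex on $[0,\infty)^m_{\supp(t)}$. Since $t\neq 0_{\R^m}$, fix $k\in\supp(t)$ and restrict attention to the segment of vectors supported on $\{k\}$; along it $D_\phi$ reduces, up to an additive constant, to $s_k\mapsto t_k\,\phi(s_k/t_k)=f(s_k,t_k)$. As the restriction of a strictly convex function to a nondegenerate segment is strictly convex, $f(\cdot,t_k)$ is strictly convex, and the recalled equivalence (with $w=t_k>0$) forces Condition~\ref{cond:phi:strict:convex}. I expect the main obstacle to be the bookkeeping in (iii): correctly discarding the off-support coordinates on $[0,\infty)^m_{\supp(t)}$, verifying that two distinct admissible points must differ in a coordinate belonging to $\supp(t)$, and using $t\neq 0_{\R^m}$ to guarantee at least one strictly convex direction.
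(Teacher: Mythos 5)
Your proof is correct and follows exactly the route the paper intends: the paper does not prove this proposition itself but cites Theorem~2.1 of Gietl and Reffel and notes that the three statements follow from the coordinatewise properties of $f$ (lower semicontinuity, continuity of $f(v,\cdot)$, and strict convexity of $f(\cdot,w)$ iff Condition~\ref{cond:phi:strict:convex}) combined with the additive structure of $D_\phi$, which is precisely your argument. The care you take with the off-support coordinates and with the absence of $-\infty$ values in the sums is exactly what is needed.
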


\subsection{Existence of $\phi$-projections on compact subsets}

\begin{defn}[$\phi$-projection]
Let $t \in [0,\infty)^m$ and let $\Mc \subset [0,\infty)^m$, $\Mc \neq \emptyset$. A $\phi$-projection of $t$ on $\Mc$, if it exists, is an element $s^* \in \Mc$ satisfying
\begin{equation*}
  \label{eq:phi:proj}
  D_\phi(s^* \mid t) = \inf_{s \in \Mc} D_\phi(s \mid t).
\end{equation*}
\end{defn}

The following result \citep[see][Lemma~3.1~(i)]{GieRef17} shows that $\phi$-projections exist as soon as the set $\Mc$ on which one wishes to $\phi$-project is compact.

\begin{prop}[Existence of $\phi$-projection]
  \label{prop:existence:phi:proj}
  Let $t \in [0,\infty)^m$ and let $\Mc$ be a nonempty compact subset of $[0,\infty)^m$. Then, there exists a $\phi$-projection of $t$ on $\Mc$.
\end{prop}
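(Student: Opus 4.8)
The plan is to reduce this to the classical Weierstrass extreme value theorem for lower semicontinuous functions. First I would fix $t \in [0,\infty)^m$ and consider the map $s \mapsto D_\phi(s \mid t)$. By Proposition~\ref{prop:properties:D:phi}(i), $D_\phi$ is lower semicontinuous on the product $[0,\infty)^m \times [0,\infty)^m$; restricting the second argument to the fixed value $t$ yields a lower semicontinuous function of $s$ alone. Indeed, if $s_n \to s$ then $(s_n, t) \to (s, t)$, and joint lower semicontinuity gives $\liminf_n D_\phi(s_n \mid t) \geq D_\phi(s \mid t)$. I would also record that $D_\phi(s \mid t)$ takes values in $\R \cup \{\infty\}$, since each summand $f(s_i, t_i)$ lies in $\R \cup \{\infty\}$ by~\eqref{eq:f}, so that no ill-defined difference $\infty - \infty$ arises and the infimum is meaningful.

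The core step is then a minimizing-sequence argument. Set $L = \inf_{s \in \Mc} D_\phi(s \mid t)$. If $L = +\infty$, then every element of the nonempty set $\Mc$ attains the infimum and is a $\phi$-projection. Otherwise, I pick a minimizing sequence $(s_n)$ in $\Mc$ with $D_\phi(s_n \mid t) \to L$; compactness of $\Mc$ yields a subsequence $s_{n_k} \to s^* \in \Mc$; lower semicontinuity then gives $D_\phi(s^* \mid t) \leq \liminf_k D_\phi(s_{n_k} \mid t) = L$, while $s^* \in \Mc$ forces $D_\phi(s^* \mid t) \geq L$. Hence $D_\phi(s^* \mid t) = L$, and $s^*$ is the desired $\phi$-projection.

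As for difficulty, there is no genuine obstacle here, which is presumably why the result is cited rather than proved in detail. The only points requiring a modicum of care are the passage from joint to separate lower semicontinuity and the bookkeeping around the possibly infinite value of $D_\phi$; the latter is handled by the explicit split on whether $L = +\infty$. An equivalent and perhaps cleaner formulation would be to note that lower semicontinuity of $D_\phi(\cdot \mid t)$ is equivalent to closedness of all its sublevel sets $\{ s \in \Mc : D_\phi(s \mid t) \leq \alpha \}$; for finite $L$, each such set with $\alpha > L$ is a nonempty closed subset of the compact set $\Mc$, hence compact, and this nested family has nonempty intersection by the finite intersection property, any point of which achieves $D_\phi(\cdot \mid t) = L$ and is therefore a $\phi$-projection.
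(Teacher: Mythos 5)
Your argument is correct and is exactly the standard one underlying this result: the paper does not prove it but cites Lemma~3.1~(i) of \cite{GieRef17}, whose proof is the same lower-semicontinuity-plus-compactness (direct method) argument you give, using Proposition~\ref{prop:properties:D:phi}~(i). The only unstated point is that $L=-\infty$ cannot occur, but your own inequality $D_\phi(s^*\mid t)\leq L$ together with $D_\phi(s^*\mid t)\in\R\cup\{\infty\}$ already rules this out, so the proof is complete as written.
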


We end this subsection by noting that, if one intends to $\phi$-project only on compact sets of nonnegative vectors, without loss of generality, it is enough to study $\phi$-projections of vectors of $[0,1]^m$ on compact subsets of $[0,1]^m$. To see this, let $t \in [0,\infty)^m$ and let $\Mc$ be a nonempty compact subset of $[0,\infty)^m$. Then, there exists $M \in (0,\infty)$ such $t \in [0,M]^m$ and $\Mc \subset [0,M]^m$. Let $t' = t/M \in [0,1]^m$ and let $\Mc' = \{s/M : s \in \Mc \} \subset [0,1]^m$. Clearly, $\Mc'$ is also compact. Furthermore,
\begin{equation*}
  \arginf_{s' \in \Mc'} D_\phi(s' \mid t') = \frac{1}{M} \arginf_{s \in \Mc} D_\phi(s/M \mid t') =  \frac{1}{M} \arginf_{s \in \Mc} D_\phi(s \mid t),
\end{equation*}
where the second equality is a consequence of the fact that, from~\eqref{eq:f} and~\eqref{eq:phi:div}, for any $s,t \in [0,M]^2$,
$$
D_\phi(s \mid t)  = \sum_{i=1}^m f(s_i,t_i) = M \sum_{i=1}^m f(s_i/M,t_i/M) = M D_\phi(s/M \mid t/M).
$$
Hence, as we shall only be interested in $\phi$-projecting on compact sets of nonnegative vectors, without loss of generality, we shall only consider vectors in $[0,1]^m$ in the remainder of this work.

\subsection{Unicity of $\phi$-projections on compact and convex sets}

Another common condition on $\phi$ appearing in the literature is the following.

\begin{cond}
  \label{cond:phi:lim}
  The function $\phi$ satisfies $\lim_{x \to \infty} \frac{\phi(x)}{x} = \infty$.
\end{cond}

As a direct corollary of Lemma~3.1 of \cite{GieRef17} (with a slight modification of statement (ii) coming from its proof), we have:

\begin{prop}[Unicity of $\phi$-projection]
  \label{prop:phi:proj:convex}
  Let $t \in [0,1]^m$ and $\Mc$ be a nonempty closed and convex subset of $[0,1]^m$. Then, under Condition~\ref{cond:phi:strict:convex}:
  \begin{enumerate}[(i)]
  \item If $\Mc \subset [0,1]^m_{\supp(t)}$, the $\phi$-projection of $t$ on $\Mc$ is unique.
  \item If $\Mc \cap [0,1]^m_{\supp(t)} \neq \emptyset$ and Condition~\ref{cond:phi:lim} holds, the $\phi$-projection of $t$ on $\Mc$ is unique and coincides with the $\phi$-projection of $t$ on $\Mc \cap [0,1]^m_{\supp(t)}$.
  \end{enumerate}
\end{prop}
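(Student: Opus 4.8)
The plan is to observe first that, in both parts, existence is already settled, so that only uniqueness (and, in part (ii), the identification of the minimizer) remains to be established. Indeed, a closed subset of $[0,1]^m$ is bounded, hence compact, so Proposition~\ref{prop:existence:phi:proj} guarantees at least one $\phi$-projection of $t$ on $\Mc$, and likewise on any nonempty closed subset of $\Mc$. It therefore suffices to exclude the existence of two distinct minimizers of $D_\phi(\cdot \mid t)$.

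For part (i), I would simply appeal to strict convexity. If $t = 0_{\R^m}$, then $[0,1]^m_{\supp(t)} = \{0_{\R^m}\}$, forcing $\Mc = \{0_{\R^m}\}$, and the claim is immediate; otherwise $t \neq 0_{\R^m}$ and Proposition~\ref{prop:properties:D:phi}(iii) shows that, under Condition~\ref{cond:phi:strict:convex}, $D_\phi(\cdot \mid t)$ is strictly convex on $[0,1]^m_{\supp(t)} \supset \Mc$. A strictly convex function restricted to the convex set $\Mc$ admits at most one minimizer (two distinct minimizers would, by convexity of $\Mc$ and strict convexity, make their midpoint a strictly smaller value), which together with existence yields uniqueness.

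For part (ii), the strategy is to reduce to part (i) by showing that every $\phi$-projection of $t$ on $\Mc$ automatically has its support contained in $\supp(t)$, so that the projection on $\Mc$ is in fact a projection on the smaller set $\Nc := \Mc \cap [0,1]^m_{\supp(t)}$. This support-containment step is where Condition~\ref{cond:phi:lim} is used and is, to my mind, the main obstacle. The set $\Nc$ is nonempty by assumption and is closed and convex, being the intersection of $\Mc$ with $[0,1]^m_{\supp(t)}$ (itself the intersection of the box $[0,1]^m$ with the subspace $\{s : s_i = 0,\ i \notin \supp(t)\}$); any $s_0 \in \Nc$ satisfies $\supp(s_0) \subset \supp(t)$ and hence $D_\phi(s_0 \mid t) < \infty$. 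On the other hand, if some $s \in \Mc$ has an index $i$ with $s_i > 0$ and $t_i = 0$, then by~\eqref{eq:f} the corresponding summand is $f(s_i,0) = s_i \lim_{x \to \infty} \phi(x)/x$, which equals $+\infty$ under Condition~\ref{cond:phi:lim}; consequently $D_\phi(s \mid t) = +\infty > D_\phi(s_0 \mid t)$, so such an $s$ cannot minimize $D_\phi(\cdot \mid t)$ over $\Mc$. Hence every minimizer over $\Mc$ lies in $\Nc$.

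It then remains to assemble the pieces. Applying Proposition~\ref{prop:existence:phi:proj} and part (i) to the nonempty compact convex set $\Nc \subset [0,1]^m_{\supp(t)}$, the $\phi$-projection of $t$ on $\Nc$ exists and is unique; denote it $s^*$. Since $\Nc \subset \Mc$, we have $\inf_{s \in \Mc} D_\phi(s \mid t) \leq D_\phi(s^* \mid t) < \infty$, and by the previous paragraph any minimizer of $D_\phi(\cdot \mid t)$ over $\Mc$ belongs to $\Nc$; such a minimizer then also minimizes $D_\phi(\cdot \mid t)$ over $\Nc$ and, by uniqueness there, must equal $s^*$. Therefore the $\phi$-projection of $t$ on $\Mc$ is unique and coincides with that on $\Nc = \Mc \cap [0,1]^m_{\supp(t)}$, which is precisely the assertion of (ii).
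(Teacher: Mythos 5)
Your proof is correct. Note that the paper itself does not write out a proof of this proposition: it presents it as a direct corollary of Lemma~3.1 of Gietl and Reffel (2017), with the refinement in part (ii) extracted from that lemma's proof. Your self-contained argument supplies exactly what that citation hides, and it does so soundly: part (i) is the standard uniqueness-from-strict-convexity argument on the convex set $\Mc \subset [0,1]^m_{\supp(t)}$ (where $D_\phi(\cdot \mid t)$ is finite and, by Proposition~\ref{prop:properties:D:phi}(iii), strictly convex), including the degenerate case $t = 0_{\R^m}$; and part (ii) correctly isolates the role of Condition~\ref{cond:phi:lim}, namely that any $s \in \Mc$ with $s_i > 0$ and $t_i = 0$ has $D_\phi(s \mid t) = +\infty$, so that all minimizers over $\Mc$ are forced into $\Mc \cap [0,1]^m_{\supp(t)}$, reducing (ii) to (i). This is the same mechanism used in the cited lemma, so there is no substantive divergence in approach, only in the level of detail.
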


In the previous proposition, Condition~\ref{cond:phi:strict:convex} is essential: as illustrated in Example 3.2 of \cite{GieRef17}, if $\phi$ is convex but not strictly convex, unicity of $\phi$-projections on compact and convex sets may not hold.

\subsection{\phidivergences\, strongly convex in their first argument}

We end this section by stating conditions under which \phidivergences\, are strong\-ly convex in their first argument. Let $\Yc$ be a convex subset of $\R^d$ ($d \in \N^+$) and recall that a function $H$ from $\Yc$ to $\R$ is strongly convex if and only if there exists a constant $\kappa_H \in (0,\infty)$ such that, for any $x,y \in \Yc$ and $\alpha \in [0,1]$,
$$
H(\alpha x + (1-\alpha)y) \leq \alpha H(x) + (1-\alpha) H(y)  - \frac{\kappa_H}{2} \alpha (1-\alpha) \| x - y\|_2^2.
$$
When $H$ is twice continuously differentiable on $\mathring \Yc$, the interior of $\Yc$, we have \citep[see, e.g.,][Theorem~2.1.11]{Nes04} that $H$ is strongly convex on $\Yc$ if and only if, for any $x \in \mathring \Yc$, $\Hc(x) - \kappa_H I_d$ is positive semi-definite, where $\Hc(x)$ is the Hessian matrix of $H$ at $x$ and $I_d$ is the $d \times d$ identity matrix. We shall exploit this equivalent characterization in Section~\ref{sec:convex}.

The following condition on $\phi$ will prove essential in the developments below.

\begin{cond}[Strong convexity of $\phi$]
  \label{cond:phi:strong:convex}
For any $w \in (0,\infty)$, the restriction of $\phi$ to $[0,1/w]$ is strongly convex.
\end{cond}

Note that many functions $\phi$ involved in the definition of classical \phidivergences\, satisfy this condition. The equivalent characterization of strong convexity mentioned above allows the validity of Condition~\ref{cond:phi:strong:convex} to be easily verified for those functions $\phi$ satisfying:
\begin{cond}[Differentiability of $\phi$]
  \label{cond:phi:diff}
  The function $\phi$ is twice continuously differentiable on $(0,\infty)$.
\end{cond}

\renewcommand{\arraystretch}{2}
\begin{table}[t!]
  \centering
  \caption{The function $\phi$ of several classical \phidivergences\, satisfying Conditions~\ref{cond:phi:strong:convex} and~\ref{cond:phi:diff} and the associated strong convexity constant.}
  \label{tab:phi}
    \begin{tabular}{lll}
      \hline
      \hline
      \textbf{Divergence} & $\phi(x)$ &$\kappa_{\phi}(w)$ \\
      \hline
      \hline
      Kullback--Leibler & $x \log x$ & $w$ \\
      Pearson's $\chi^2$ & $(x-1)^2$ & 2 \\
      Squared Hellinger & $2(1-\sqrt{x})$ & $\frac{w^{\frac{3}{2}}}{2}$ \\
      Reverse relative entropy & $-\log x$ & $w^2$ \\
      Vincze--Le Cam & $\frac{(x-1)^2}{x+1}$ & $8 \big(\frac{1}{w}+1 \big)^{-3}$ \\
      Jensen--Shannon & $(x+1) \log \frac{2}{x+1}+x \log x$ & $\frac{w^2}{w+1}$ \\
      Neyman's $\chi^2$ & $\frac{1}{x}-1$ & $2 w^3$ \\
      $\alpha$-divergence & $\frac{4 \big(1-x^{\frac{1+\alpha}{2}} \big)}{1-\alpha^2}, \alpha<3 , \alpha \neq \pm 1$ & $w^{\frac{3-\alpha}{2}}$ \\
      \hline
      \hline
    \end{tabular}
\end{table}
\renewcommand{\arraystretch}{1.3}

Indeed, Condition~\ref{cond:phi:strong:convex} is then equivalent to $\phi''(x) \geq \kappa_\phi(w)$ for all $x \in (0,1/w)$, where $\kappa_\phi(w)$ is the strong convexity constant of the restriction of $\phi$ to $[0,1/w]$. Note that \cite{Mel20} considered \phidivergences\, constructed from such functions $\phi$, but did not investigate their strong convexity in their first argument. Table~\ref{tab:phi} shows the function~$\phi$ and the associated strong convexity constant for several classical \phidivergences\, satisfying Condition~\ref{cond:phi:diff}.

The following result is proven in Section~\ref{proofs:prelim}.

\begin{prop}
  \label{prop:D:phi:strong:convex}
  Under Condition~\ref{cond:phi:strong:convex}, the function $D_\phi(\cdot \mid t)$ is strongly convex on $[0,1]^m_{\supp(t)}$ for all $t \in [0,1]^m$, $t \neq 0_{\R^m}$.
\end{prop}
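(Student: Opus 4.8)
The plan is to exploit the separable structure of $D_\phi(\cdot \mid t)$ on $[0,1]^m_{\supp(t)}$ and to reduce the claim to a one-dimensional strong convexity statement for each coordinate in $\supp(t)$. First I would observe that for $s \in [0,1]^m_{\supp(t)}$, any index $i$ with $t_i = 0$ forces $s_i = 0$, so the corresponding term $f(s_i,t_i) = f(0,0) = 0$ by~\eqref{eq:f}; hence
\begin{equation*}
D_\phi(s \mid t) = \sum_{i \in \supp(t)} t_i\, \phi\!\left( \frac{s_i}{t_i} \right), \qquad s \in [0,1]^m_{\supp(t)}.
\end{equation*}
This writes $D_\phi(\cdot \mid t)$ as a finite sum of functions each depending on a single coordinate.

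Next I would show that, for each $i \in \supp(t)$, the map $g_i : [0,1] \to \R$ defined by $g_i(s_i) = t_i\, \phi(s_i/t_i)$ is strongly convex. Since $t \in [0,1]^m$ we have $t_i \in (0,1]$, so $1/t_i \geq 1$ and the argument $s_i/t_i$ ranges over $[0,1/t_i]$ as $s_i$ ranges over $[0,1]$. Applying Condition~\ref{cond:phi:strong:convex} with $w = t_i$, the restriction of $\phi$ to $[0,1/t_i]$ is strongly convex with constant $\kappa_\phi(t_i) > 0$; substituting $u = s_i/t_i$ and $v = s_i'/t_i$ into the defining inequality of strong convexity and multiplying through by $t_i$ then yields that $g_i$ is strongly convex on $[0,1]$ with modulus $\kappa_\phi(t_i)/t_i$.

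Finally I would sum these coordinatewise inequalities over $i \in \supp(t)$. Setting $\kappa = \min_{i \in \supp(t)} \kappa_\phi(t_i)/t_i$, which is strictly positive as the minimum of finitely many positive numbers, and using that both $s$ and $s'$ vanish outside $\supp(t)$ so that $\sum_{i \in \supp(t)} (s_i - s_i')^2 = \| s - s' \|_2^2$, I would obtain the strong convexity inequality for $D_\phi(\cdot \mid t)$ with constant $\kappa_{D_\phi} = \kappa$. The argument is essentially routine; the only points requiring care are the support bookkeeping (checking that the off-support coordinates contribute nothing to either the divergence or the squared norm) and the verification that the per-coordinate rescaling keeps the argument $s_i/t_i$ inside the interval $[0,1/t_i]$ on which Condition~\ref{cond:phi:strong:convex} guarantees strong convexity.
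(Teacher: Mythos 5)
Your proposal is correct and follows essentially the same route as the paper's proof: reduce to the one-dimensional strong convexity of $s_i \mapsto t_i\,\phi(s_i/t_i)$ on $[0,1]$ via Condition~\ref{cond:phi:strong:convex} with $w = t_i$ (yielding modulus $\kappa_\phi(t_i)/t_i$), discard the zero summands off $\supp(t)$, and sum with constant $\min_{i \in \supp(t)} \kappa_\phi(t_i)/t_i$. The only cosmetic difference is that the paper handles the off-support coordinates by a ``without loss of generality $t \in (0,1]^m$'' reduction (and explicitly notes the convexity of $[0,1]^m_{\supp(t)}$ up front), whereas you track them directly through the squared norm; both are fine.
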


We end this subsection with the statement of a short lemma, proven in Section~\ref{proofs:prelim}, that we shall use in Section~\ref{sec:convex}.

\begin{lem}
  \label{lem:phi:strict:convex}
  Condition~\ref{cond:phi:strong:convex} implies Condition~\ref{cond:phi:strict:convex}.
\end{lem}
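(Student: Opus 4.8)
The plan is to deduce the global strict-convexity inequality defining Condition~\ref{cond:phi:strict:convex} from the local strong-convexity estimates guaranteed by Condition~\ref{cond:phi:strong:convex}, exploiting the fact that the intervals $[0,1/w]$ exhaust $[0,\infty)$ as $w$ ranges over $(0,\infty)$.

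First I would fix arbitrary $x,y \in [0,\infty)$ with $x \neq y$ and $\alpha \in (0,1)$; these are exactly the ingredients entering the definition of strict convexity, and the goal is to show that $\phi(\alpha x + (1-\alpha) y) < \alpha \phi(x) + (1-\alpha)\phi(y)$. Next I would pick $w \in (0,\infty)$ small enough that $\max(x,y) \leq 1/w$ --- any $0 < w \leq 1/\max(x,y)$ does the job --- so that the entire segment joining $x$ and $y$ lies inside $[0,1/w]$. Condition~\ref{cond:phi:strong:convex} then provides a constant $\kappa \in (0,\infty)$, depending on this $w$, such that
\[
\phi(\alpha x + (1-\alpha) y) \leq \alpha \phi(x) + (1-\alpha)\phi(y) - \frac{\kappa}{2}\,\alpha(1-\alpha)\,(x-y)^2 .
\]

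Finally, because $\alpha(1-\alpha) > 0$ for $\alpha \in (0,1)$, $(x-y)^2 > 0$ since $x \neq y$, and $\kappa > 0$, the correction term is strictly negative, which upgrades the displayed inequality to the strict one required. As $x,y,\alpha$ were arbitrary, $\phi$ is strictly convex on $[0,\infty)$, that is, Condition~\ref{cond:phi:strict:convex} holds.

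I do not expect a genuine obstacle here: the only point requiring a moment's thought is that the covering of the domain by the intervals $[0,1/w]$ allows one to apply a \emph{local} strong-convexity estimate to any prescribed pair $x,y$, after which strictness follows immediately from the positivity of the strong-convexity correction term. One should merely note that the constant $\kappa$ may depend on the chosen $w$ (equivalently, on how large $x$ and $y$ are), but this causes no difficulty, since only the existence of \emph{some} positive $\kappa$ for the fixed pair $x,y$ is needed.
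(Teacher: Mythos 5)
Your proof is correct and follows essentially the same route as the paper's: fix $x\neq y$, choose $w\leq 1/\max\{x,y\}$ so that both points lie in $[0,1/w]$, and observe that the strictly positive strong-convexity correction term $\tfrac{\kappa}{2}\alpha(1-\alpha)|x-y|^2$ forces the strict inequality. No issues.
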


\section{On the continuous differentiability of $\phi$-projections}
\label{sec:diff}

\subsection{Framework}
\label{sec:framework}

Let $t_0 \in [0,1]^m$ be the nonnegative vector to be $\phi$-projected on a nonempty subset $\Mc$ of $[0,1]^m$. Since we are interested in stating differentiability results at $t_0$, we naturally impose that $t_0 \in (0,1)^m$. The aim of this section is to provide conditions under which the function 
	\begin{equation}
		\label{eq:S:*:map}
		\Sc^*(t) = \arginf_{s \in \Mc} D_\phi(s \mid t)
	\end{equation}
is well-defined over an open neighborhood $\Nc(t_0) \subset (0,1)^m$ of $t_0$ and continuously differentiable at $t_0$.

In the rest of this work, we shall assume that the set $\Mc$ is constructed as follows.

\begin{cond}[Construction of $\Mc$]
  \label{cond:M}
  There exists a bounded subset $\Theta$ of $\R^k$ for some strictly positive integer $k \leq m$ with $\mathring \Theta \neq \emptyset$ and a continuous injective function $\Sc$ from $\bar \Theta$ to $[0,1]^m$ that is twice continuously differentiable on $\mathring \Theta$ such that $\Mc = \Sc(\bar \Theta)$ and $\Sc(\mathring \Theta) \subset (0,1)^m$.
\end{cond}

\begin{remark}
  Assuming  $\Theta$ bounded in Condition~\ref{cond:M} is not restrictive, as an unbounded parameter space can always be reparametrized using a suitable bijection into a bounded subset of $\R^k$. Also, we do not define $\Theta$ as a closed set as we find it more explicit to write $\bar \Theta$ when necessary. Assuming $\Sc$ injective is particularly natural in a statistical framework as it guarantees the identifiability of the ``parametric'' model $\Sc(\bar \Theta) = \{ \Sc(\theta) : \theta \in \bar \Theta\}$. Furthermore, note that Condition~\ref{cond:M} implies that $\Mc = \Sc(\bar \Theta)$ is compact (since $\bar \Theta$ is compact and $\Sc$ is continuous) and nonempty (because $\mathring \Theta \neq \emptyset$). Hence, by Proposition~\ref{prop:existence:phi:proj}, for any $t \in [0,1]^m$, a $\phi$-projection of $t$ on $\Mc$ exists (but may not be unique). Finally, the assumption that $\Sc(\mathring \Theta) \subset (0,1)^m$ is necessary, on the one hand, so that, for any $t \in (0,1)^m$, the function $D_\phi(\Sc(\cdot) \mid t)$ is twice continuously differentiable on $\mathring \Theta$, and, on the other hand, so that $\Mc \cap (0,1)^m \neq \emptyset$ (which are both needed to obtain the desired differentiability results as we shall see in the next subsections). \qed
\end{remark}

The previous setting is very general. To illustrate this, consider for a moment that the vectors of interest in $[0,1]^m$ are probability vectors. Then, for some $k < m$, $\Mc = \Sc(\bar \Theta)$ could represent a parametric family of probability vectors. For example, the first application in \change{Section~\ref{sec:app:asym}} considers the class of probability vectors obtained from the binomial distributions with parameters $m-1$ and $\theta \in \bar \Theta = [0,1]$. Some thought reveals that most families of discrete distributions with finite support could actually be used. Note, however, that $\Mc$ is typically not convex in such parametric situations. As shall be illustrated in the second and third application in \change{Section~\ref{sec:app:asym}}, Condition~\ref{cond:M} can also be tailored to scenarios in which one wishes to $\phi$-project on a set $\Mc$ of probability vectors defined by linear equalities, a common situation in applications implying the convexity of $\Mc$. In such convex cases, the key conditions necessary for the differentiability of $\phi$-projections are substantially simpler and automatically verified for a rather large class of \phidivergences. This will be established in Section~\ref{sec:convex}. Before that, Section~\ref{sec:param} will first state general conditions under which $\Sc^*$ in~\eqref{eq:S:*:map} is well-defined and continuously differentiable at $t_0$. In what follows, Conditions~\ref{cond:phi:strict:convex},~\ref{cond:phi:diff} and~\ref{cond:M} will always be assumed to hold.

\subsection{The general case of $\Mc$ compact}
\label{sec:param}

We shall first investigate the differentiability of the function $\Sc^*$ in~\eqref{eq:S:*:map} at $t_0 \in (0,1)^m$ without assuming that the set $\Mc = \Sc(\bar \Theta)$ is convex. Specifically, we consider the following (nested) conditions.

\begin{cond}
  [$\phi$-projections in a neighborhood of $t_0$ I]
  \label{cond:unicity:t0}
  There exists an open neighborhood $\Nc(t_0) \subset (0,1)^m$ of $t_0$ such that, for any $t \in \Nc(t_0)$, the $\phi$-projection $s^*$ of $t$ on $\Mc = \Sc(\bar \Theta)$ (exists and) is unique.
\end{cond}

\begin{cond}
  [$\phi$-projections in a neighborhood of $t_0$ II]
  \label{cond:unicity:interior:t0}
Condition~\ref{cond:unicity:t0} holds and, for any $t \in \Nc(t_0)$, the unique $\phi$-projection $s^*= \Sc^*(t)$ satisfies $s^* = \Sc(\theta^*)$ for some (unique) $\theta^* \in \mathring \Theta$.
\end{cond}

Clearly, Condition~\ref{cond:unicity:interior:t0} is a strengthening of Condition~\ref{cond:unicity:t0}, as it requires $\theta^*$ to belong to the interior of $\Theta$. Also, when restricted to probability vectors, Condition~\ref{cond:unicity:interior:t0} is a slight strengthening of Assumption~3 of \cite{JimPinAlbMor11} which is recovered by setting $\Nc(t_0) = \{t_0\}$. That reference notes that this type of condition is common in the literature \cite[see, e.g.,][]{Whi82,Lin94,BroKez09}.

We shall first determine additional conditions under which the function $\vartheta^*:\Nc(t_0) \to \bar \Theta$ defined by
\begin{equation}
  \label{eq:theta:map}
  \vartheta^*(t) = \arginf_{\theta \in \bar \Theta} D_\phi(\Sc(\theta) \mid t)
\end{equation}
is continuously differentiable at $t_0$. Note that, under Condition~\ref{cond:unicity:interior:t0}, \eqref{eq:theta:map} can be equivalently expressed as
\begin{equation}
  \label{eq:theta:map:2}
  \vartheta^*(t) = \arginf_{\theta \in \mathring \Theta} D_\phi(\Sc(\theta) \mid t), \qquad t \in \Nc(t_0),
\end{equation}
implying that it is then a function from $\Nc(t_0)$ to $\mathring \Theta$. As $\Sc^*$ in~\eqref{eq:S:*:map} can be expressed as
\begin{equation}
  \label{eq:S:*:vartheta}
\Sc^*(t) = \Sc(\vartheta^*(t)),\quad t \in \Nc(t_0),
\end{equation}
we see that conditions under which $\Sc^*$ is continuously differentiable at $t_0$ will follow from conditions under which $\vartheta^*$ in~\eqref{eq:theta:map:2} is continuously differentiable at~$t_0$.

The following continuity result is proven in Section~\ref{proofs:param}.

\begin{prop}[Continuity of $\vartheta^*$ at $t_0$]
  \label{prop:cont:vartheta:t0}
Under Condition~\ref{cond:unicity:t0}, the function $\vartheta^*$ in~\eqref{eq:theta:map} is continuous at $t_0$.
\end{prop}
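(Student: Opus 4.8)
The plan is to prove continuity of $\vartheta^*$ at $t_0$ by the standard subsequence (argmax-theorem) argument, exploiting the compactness of $\bar\Theta$ together with the mixed semicontinuity properties of $D_\phi$ recorded in Proposition~\ref{prop:properties:D:phi}. First I would note that, under Condition~\ref{cond:unicity:t0}, the map $\vartheta^*$ is well-defined on $\Nc(t_0)$: for $t \in \Nc(t_0)$ the $\phi$-projection $s^*$ of $t$ on the compact set $\Mc = \Sc(\bar\Theta)$ exists and is unique, and since $\Sc$ is injective there is a unique $\theta^* \in \bar\Theta$ with $\Sc(\theta^*) = s^*$; this $\theta^*$ is $\vartheta^*(t)$. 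I would then reduce continuity at $t_0$ to the sequential statement: for every sequence $(t_n)$ in $\Nc(t_0)$ with $t_n \to t_0$, one has $\vartheta^*(t_n) \to \vartheta^*(t_0)$. Writing $\theta_n = \vartheta^*(t_n)$ and $\theta_0 = \vartheta^*(t_0)$, it suffices, by compactness of $\bar\Theta$ and the usual subsequence principle, to show that every convergent subsequence of $(\theta_n)$ has limit $\theta_0$.

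So let $(\theta_{n_k})$ be a subsequence converging to some $\theta' \in \bar\Theta$. The key step combines three ingredients. First, the optimality of $\theta_{n_k}$ for $t_{n_k}$ gives $D_\phi(\Sc(\theta_{n_k}) \mid t_{n_k}) \le D_\phi(\Sc(\theta_0) \mid t_{n_k})$; since $t_0 \in (0,1)^m$ has full support all divergences here are finite, and the continuity of the fixed-first-argument map $D_\phi(\Sc(\theta_0) \mid \cdot)$ at $t_0$ (Proposition~\ref{prop:properties:D:phi}(ii)) yields $\limsup_k D_\phi(\Sc(\theta_{n_k}) \mid t_{n_k}) \le D_\phi(\Sc(\theta_0) \mid t_0)$. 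Second, since $(\theta, t) \mapsto D_\phi(\Sc(\theta) \mid t)$ is lower semicontinuous, as the composition of the continuous map $(\theta, t) \mapsto (\Sc(\theta), t)$ with the jointly lower semicontinuous $D_\phi$ (Proposition~\ref{prop:properties:D:phi}(i)), and since $(\theta_{n_k}, t_{n_k}) \to (\theta', t_0)$, we obtain $D_\phi(\Sc(\theta') \mid t_0) \le \liminf_k D_\phi(\Sc(\theta_{n_k}) \mid t_{n_k})$. Chaining these with the optimality of $\theta_0$ at $t_0$, namely $D_\phi(\Sc(\theta_0) \mid t_0) \le D_\phi(\Sc(\theta') \mid t_0)$, forces all four quantities to coincide, so $\Sc(\theta')$ is itself a $\phi$-projection of $t_0$ on $\Mc$. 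Uniqueness of that projection (Condition~\ref{cond:unicity:t0} at $t_0$) together with injectivity of $\Sc$ then gives $\theta' = \theta_0$, which closes the subsequence argument and hence proves continuity.

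The main obstacle is the careful bookkeeping of the asymmetry in the continuity properties of $D_\phi$: we have only joint lower semicontinuity plus separate continuity in the second argument, so the argument must deploy each on the correct side of the optimality inequality, lower semicontinuity to bound $D_\phi(\Sc(\theta') \mid t_0)$ from above by the $\liminf$ of the objective along the subsequence, and continuity in $t$ to bound the objective from above by the value at the fixed comparison point $\theta_0$. Apart from this, one should check finiteness of all divergences involved, which holds because $t_0$ and the $t_n$ lie in $(0,1)^m$ and therefore have full support, so that no $\infty - \infty$ indeterminacies arise in the chain of inequalities.
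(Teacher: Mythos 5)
Your proposal is correct and follows essentially the same route as the paper's proof: extract a convergent subsequence of $(\vartheta^*(t_{n}))$ via compactness of $\bar\Theta$, combine the joint lower semicontinuity of $D_\phi$, its continuity in the second argument, and the optimality inequalities to show the subsequential limit parametrizes a $\phi$-projection of $t_0$, then invoke uniqueness and the injectivity of $\Sc$. The only cosmetic difference is that you compare against the specific minimizer $\vartheta^*(t_0)$ while the paper compares against an arbitrary $\theta \in \bar\Theta$; the argument is otherwise identical.
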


The next corollary is an immediate consequence of the previous proposition, the continuity of $\Sc$ on $\bar \Theta$ and~\eqref{eq:S:*:vartheta}.

\begin{cor}[Continuity of $\Sc^*$ at $t_0$]
  \label{cor:cont:S:*:t0}
Under Condition~\ref{cond:unicity:t0}, the function $\Sc^*$ in~\eqref{eq:S:*:map} is continuous at $t_0$.
\end{cor}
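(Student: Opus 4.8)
The plan is to exploit the factorization \eqref{eq:S:*:vartheta} and reduce the claim to the continuity of $\vartheta^*$ already delivered by Proposition~\ref{prop:cont:vartheta:t0}. First I would recall that, under Condition~\ref{cond:unicity:t0}, the $\phi$-projection of $t$ on $\Mc$ is unique for every $t$ in the neighborhood $\Nc(t_0)$, so that both $\vartheta^*$ in~\eqref{eq:theta:map} and $\Sc^*$ in~\eqref{eq:S:*:map} are well-defined single-valued maps on $\Nc(t_0)$, and that on this neighborhood they are linked precisely by $\Sc^*(t) = \Sc(\vartheta^*(t))$.

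The core step is then a one-line composition argument. By Proposition~\ref{prop:cont:vartheta:t0}, the map $\vartheta^* : \Nc(t_0) \to \bar\Theta$ is continuous at $t_0$. By Condition~\ref{cond:M}, the map $\Sc$ is continuous on all of $\bar\Theta$, and in particular at the point $\vartheta^*(t_0) \in \bar\Theta$. Hence $\Sc^* = \Sc \circ \vartheta^*$ is continuous at $t_0$, being the composition of a map continuous at $t_0$ with a map continuous at the corresponding image point $\vartheta^*(t_0)$.

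The only point to verify is that the composition is legitimate, namely that the range of $\vartheta^*$ is contained in the domain $\bar\Theta$ on which $\Sc$ is known to be continuous; this is immediate from the very definition~\eqref{eq:theta:map} of $\vartheta^*$. Since all the substance of the argument has been absorbed into Proposition~\ref{prop:cont:vartheta:t0}, there is no genuine obstacle remaining at this stage: the corollary follows at once from the fact that a composition of continuous maps is continuous.
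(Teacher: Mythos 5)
Your argument is correct and is exactly the paper's: the authors also derive the corollary as an immediate consequence of Proposition~\ref{prop:cont:vartheta:t0}, the continuity of $\Sc$ on $\bar\Theta$ from Condition~\ref{cond:M}, and the factorization $\Sc^* = \Sc \circ \vartheta^*$ in~\eqref{eq:S:*:vartheta}. Nothing is missing.
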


As shall be verified in the proof of Lemma~\ref{lem:J2} (given in Section~\ref{proofs:param}), the differentiability assumptions made on $\phi$ and $\Sc$ from Section~\ref{sec:diff} onwards imply that, for any $t \in (0,1)^m$, the function $D_\phi(\Sc(\cdot) \mid t)$ is twice continuously differentiable on $\mathring \Theta$. The continuous differentiability of the function $\vartheta^*$ in~\eqref{eq:theta:map} will be shown under Condition~\ref{cond:unicity:interior:t0} and the following additional condition.

\begin{cond}[Invertibility condition]
  \label{cond:invertibility}
  Under Condition~\ref{cond:unicity:interior:t0}, the $k \times k$ matrix whose elements are
  $$
  \frac{\partial^2 D_\phi(\Sc(\theta) \mid t_0)}{\partial \theta_i \partial \theta_j} \Big|_{\theta = \vartheta^*(t_0)}, \qquad i, j \in \{1,\dots,k\},
  $$
  is positive definite.
\end{cond}

Note that the matrix appearing in the previous condition is well-defined as, under Condition~\ref{cond:unicity:interior:t0},~\eqref{eq:theta:map:2} implies that $\vartheta^*(t_0) \in \mathring \Theta$.

 \begin{remark}
  \label{rem:invertibility}
  The matrix defined in Condition~\ref{cond:invertibility} corresponds to the matrix given in Eq.~(3) of \cite{JimPinAlbMor11}. In that reference, it is claimed that, when restricted to probability vectors, Condition~\ref{cond:unicity:interior:t0} with $\Nc(t_0) = \{t_0\}$ (which corresponds to their Assumption~3) implies Condition~\ref{cond:invertibility}. However, Condition~\ref{cond:unicity:interior:t0} with $\Nc(t_0) = \{t_0\}$ is equivalent to saying that the function $D_\phi(\Sc(\cdot) \mid t_0)$ has a unique minimum at $\theta_0 = \vartheta^*(t_0) \in \mathring \Theta$ which, from second order necessary optimality conditions, only implies the positive semi-definiteness of the matrix in Condition~\ref{cond:invertibility}.

  We provide here a simple counterexample, illustrating that Condition~\ref{cond:invertibility} cannot be dispensed with. Let $\phi(x) = (x - 1)^4$, $x \in [0, \infty)$. The function $\phi$ is strictly convex on its domain and twice continuously differentiable on $(0, \infty)$. We restrict our attention to probability vectors. Let $q_0 = (1/3, 1/3, 1/3)$ be the element to be $\phi$-projected on the subset of probability vectors $\Mc = \Sc(\bar \Theta)$ where $\Theta = (0,1/2)$ and $\Sc(\theta) = (\theta, \theta, 1 - 2\theta)$, $\theta \in \bar \Theta$. The function $\Sc$ is clearly twice continuously differentiable on $\mathring \Theta = \Theta$. Let $h$ be the function from $\bar \Theta$ to $\R$ defined by
$$
h(\theta) = D_\phi( \Sc(\theta) \mid q_0), \qquad \theta \in \bar \Theta.
$$
It is then easy to verify that
$$
h(\theta) = \frac{1}{3}( 3 \theta- 1)^4 + \frac{1}{3}( 3 \theta- 1)^4 + \frac{1}{3} ( 3 (1 - 2\theta) - 1 )^4, \qquad \theta \in \bar \Theta,
$$
and that $h$ attains its unique minimum at $\theta_0 = 1/3  \in \mathring \Theta$. Standard calculations show that
$$
h''(\theta) = 36( 3 \theta- 1)^2 + 36( 3 \theta- 1)^2 + 144 [ 3 (1 - 2\theta) - 1 ]^2, \qquad \theta \in \mathring \Theta,
$$
and that $h''(\theta_0) = h''(\vartheta^*(q_0)) = 0$.  \qed
\end{remark}

Before stating one of the main results of this work, let us express the matrix defined in Condition~\ref{cond:invertibility} under the elegant form given in Eq.~(3) of \cite{JimPinAlbMor11}. To do so, additional definitions are needed to be able to properly deal with the differentiability of matrix-valued functions. Let $a,b,c,d \in \N^+$. For any function $g : U \rightarrow \R^{b \times c}$, where $U$ is an open subset of $\R^a$, we shall denote by $g_{i,j}$, $i \in \{1,\dots,b\}$, $j \in \{1,\dots, c\}$, its $b c$ component functions. Furthermore, we shall say that $g$ is $r$-times continuously differentiable on $U$, $r \geq 1$, if, for any $i \in \{1,\dots,b\}$, $j \in \{1,\dots, c\}$, the $r$-order partial derivatives of $g_{i,j}$ exist and are continuous on $U$. Next, consider the (column-major) vectorization operator which maps any function $g : \R^a \rightarrow \R^{b \times c}$ to its vectorized version $\ol{g} : \R^a \rightarrow \R^{bc}$ whose $b c$ components functions $\ol{g}_1,\dots,\ol{g}_{bc}$ are defined by $\ol{g}_{i + (j - 1) b} = g_{i,j}$, $(i,j) \in \{1,\dots,b\} \times \{1,\dots, c\}$. For any continuously differentiable function $g : U \rightarrow \R^{b \times c}$, where $U$ is an open subset of $\R^a$, we then define its Jacobian matrix at $y \in U \subset \R^a$ as the Jacobian matrix $J[{\ol{g}}](y)$ of the vectorized version $\ol{g}$ of $g$ at~$y$. In other words,
$$
J[g](y) = J[{\ol{g}}](y) =
\begin{bmatrix}
\frac{\partial \ol{g}_{1}(x)}{\partial x_1} \Big |_{x = y} & \dots & \frac{\partial \ol{g}_{1}(x)}{\partial x_a} \Big |_{x = y} \\
\vdots &  & \vdots \\
\frac{\partial \ol{g}_{b c}(x)}{\partial x_1} \Big |_{x = y} & \dots & \frac{\partial \ol{g}_{b c}(x)}{\partial x_a} \Big |_{x = y} \\
\end{bmatrix}, \qquad y \in U \subset \R^a.
$$
Note that $J[g] = J[{\ol{g}}]$ is a function from $U$ to $\R^{b c \times a}$. If $g$ is twice continuously differentiable on $U$, then, for any $y \in U$, we define $J_2[g](y)$ to be the Jacobian of the function $\ol{J[g]}$ at $y$, that is, $J_2[g](y) = J[\ol{J[g]}](y)$, $y \in U$. Some thought reveals that, with the previous definitions, it makes no difference in terms of Jacobian whether a given function is regarded as taking its values in $\R^{b \times 1}$, $\R^{1 \times b}$ or $\R^b$. In a related way, we follow the usual convention that vectors, when they appear in matrix expressions, are regarded as $1$-column matrices.

The following result, proven in Section~\ref{proofs:param}, can be used to obtain an explicit expression of the matrix appearing in Condition~\ref{cond:invertibility}.

\begin{lem}
  \label{lem:J2}
For any $t \in (0,1)^m$ and $\theta \in \mathring \Theta$, we have that
\begin{multline}
  \label{eq:J2}
  J_2[D_\phi(\Sc(\cdot) \mid t)](\theta) = \\  J[\Sc](\theta)^\top  \diag \left(\frac{1}{t_1}\phi'' \left( \frac{\Sc_1(\theta)}{t_1} \right), \dots, \frac{1}{t_m}\phi'' \left( \frac{\Sc_m(\theta)}{t_m} \right)  \right) J[\Sc](\theta) \\
+ \left( I_k \otimes \left(\phi' \left( \frac{\Sc_1(\theta)}{t_1} \right), \dots, \phi' \left( \frac{\Sc_m(\theta)}{t_m} \right) \right)^\top \right) J_2[\Sc] (\theta),
\end{multline}
where $\Sc_1, \dots, \Sc_m$ are the $m$ component functions of $\Sc$ and the symbol $\otimes$ denotes the Kronecker product.
\end{lem}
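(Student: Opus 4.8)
The plan is to reduce the statement to a direct computation: I would write out the scalar map $g(\theta) = D_\phi(\Sc(\theta) \mid t)$ explicitly, differentiate it twice by the chain rule, and then recast the two resulting sums in the matrix and Kronecker-product notation introduced just before the lemma. Since $t \in (0,1)^m$, every $t_i > 0$, so the first branch of~\eqref{eq:f} applies and, together with~\eqref{eq:phi:div},
$$
g(\theta) = D_\phi(\Sc(\theta) \mid t) = \sum_{i=1}^m t_i\, \phi\!\left( \frac{\Sc_i(\theta)}{t_i} \right), \qquad \theta \in \mathring\Theta.
$$
Because Condition~\ref{cond:M} makes $\Sc$ twice continuously differentiable on $\mathring\Theta$ with $\Sc(\mathring\Theta) \subset (0,1)^m$, and Condition~\ref{cond:phi:diff} makes $\phi$ twice continuously differentiable on $(0,\infty)$, each argument $\Sc_i(\theta)/t_i$ lies in $(0,\infty)$ and each summand is twice continuously differentiable on $\mathring\Theta$. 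Hence $g$ is twice continuously differentiable on $\mathring\Theta$, which incidentally establishes the regularity claim made just before the lemma and ensures that $J_2[g]$, the Jacobian of $\ol{J[g]}$ (i.e.\ the Hessian of $g$), is well-defined.

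Differentiating by the chain rule, the inner factor $1/t_i$ cancels the prefactor $t_i$, giving for $a \in \{1,\dots,k\}$
$$
\frac{\partial g}{\partial \theta_a}(\theta) = \sum_{i=1}^m \phi'\!\left( \frac{\Sc_i(\theta)}{t_i} \right) \frac{\partial \Sc_i}{\partial \theta_a}(\theta),
$$
and differentiating once more, applying the chain rule to $\phi'(\Sc_i/t_i)$,
$$
\frac{\partial^2 g}{\partial \theta_a \partial \theta_b}(\theta) = \sum_{i=1}^m \frac{1}{t_i}\phi''\!\left( \frac{\Sc_i(\theta)}{t_i} \right) \frac{\partial \Sc_i}{\partial \theta_a}(\theta) \frac{\partial \Sc_i}{\partial \theta_b}(\theta) + \sum_{i=1}^m \phi'\!\left( \frac{\Sc_i(\theta)}{t_i} \right) \frac{\partial^2 \Sc_i}{\partial \theta_a \partial \theta_b}(\theta).
$$

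It then remains to identify the $(a,b)$ entry of $J_2[g](\theta)$ with the expression above and to match its two sums to the two terms of~\eqref{eq:J2}. Since $(J[\Sc])_{i,a} = \partial \Sc_i/\partial \theta_a$, the first sum is precisely the $(a,b)$ entry of $J[\Sc]^\top \diag(t_i^{-1}\phi''(\Sc_i(\theta)/t_i)) J[\Sc]$. For the second sum I would invoke the column-major vectorization convention: $\ol{J[\Sc]}_{i+(a-1)m} = \partial \Sc_i/\partial \theta_a$, hence $(J_2[\Sc])_{i+(a-1)m,\,b} = \partial^2 \Sc_i/\partial \theta_b \partial \theta_a$. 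Expanding the product $\bigl(I_k \otimes v^\top\bigr) J_2[\Sc](\theta)$ with $v = (\phi'(\Sc_1(\theta)/t_1), \dots, \phi'(\Sc_m(\theta)/t_m))^\top$ block by block, the factor $I_k$ selects exactly the block of rows indexed by the output coordinate $a$, leaving $\sum_i \phi'(\Sc_i(\theta)/t_i)\, \partial^2 \Sc_i/\partial \theta_b \partial \theta_a$ in position $(a,b)$, as required; symmetry of mixed partials (valid since $\Sc$ is $C^2$) reconciles the order of differentiation.

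The computation itself is routine, and no analytic difficulty arises. The only delicate point, and the one I expect to demand the most care, is the bookkeeping of the vectorization and Kronecker-product indices in the second term: one must verify that the row coordinate $a$ of the Hessian is paired with the correct $m$-block of $J_2[\Sc]$. For that reason I would carry out the block expansion of $I_k \otimes v^\top$ explicitly rather than rely on a Kronecker identity, so that the index matching stays fully transparent.
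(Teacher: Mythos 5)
Your proof is correct, and the index bookkeeping in the Kronecker term — the pairing of row block $a$ of $J_2[\Sc]$ (rows $(a-1)m+1$ through $am$, holding $\partial^2\Sc_i/\partial\theta_b\partial\theta_a$) with row $a$ of $I_k\otimes v^\top$ — checks out against the paper's column-major vectorization convention, as does the reduction to $D_\phi(\Sc(\theta)\mid t)=\sum_i t_i\,\phi(\Sc_i(\theta)/t_i)$ and the regularity argument. The route is genuinely different from the paper's, though: the paper stays at the matrix level throughout, writing $J[D_\phi(\Sc(\cdot)\mid t)](\theta)=\Phi_t'(\Sc(\theta))^\top J[\Sc](\theta)$ with $\Phi_t'(s)=(\phi'(s_1/t_1),\dots,\phi'(s_m/t_m))$ and then differentiating this product via the matrix product rule $J[gh](x)=(h(x)^\top\otimes I_b)J[g](x)+(I_d\otimes g(x))J[h](x)$ (cited from Brewer) combined with the chain rule, so that the Kronecker structure of~\eqref{eq:J2} emerges automatically rather than being verified entrywise. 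Your entrywise computation is more elementary and self-contained — it requires no external matrix-calculus identity and makes the symmetry of the Hessian manifest — at the cost of the index verification you rightly flag as the delicate step. The paper's approach buys compactness and reusability: the same product-rule identity and the auxiliary map $\Phi_t'$ reappear in the proof of Theorem~\ref{thm:diff:vartheta} to differentiate $F(t,\theta)$ in $t$, so establishing the machinery once pays off twice. Either proof is acceptable.
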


We can now state one of our main results. Its proof is given in Section~\ref{proofs:param}.

\begin{thm}[Differentiability of $\vartheta^*$ at $t_0$]
  \label{thm:diff:vartheta}
  Under Conditions \ref{cond:unicity:interior:t0} and \ref{cond:invertibility}, $\vartheta^*$ in \eqref{eq:theta:map:2} is continuously differentiable at $t_0 = (t_{0,1}, \dots, t_{0,m}) \in (0,1)^m$ with Jacobian matrix at $t_0$ given by
  \begin{equation}
    \label{eq:J:vartheta:map}
    J[\vartheta^*](t_0) = J_2[D_\phi(\Sc(\cdot) \mid t_0)](\vartheta^*(t_0))^{-1} J[\Sc](\vartheta^*(t_0))^\top \Delta(t_0) ,
  \end{equation}
  where the matrix-valued function $J_2[D_\phi(\Sc(\cdot) \mid t_0)]$ is as in~\eqref{eq:J2} with $t = t_0$ and
  \begin{multline}
    \label{eq:Delta:t0}
  \Delta(t_0) = \diag\left( \frac{\Sc_1(\vartheta^*(t_0))}{t_{0,1}^2} \phi''\left(\frac{\Sc_1(\vartheta^*(t_0))}{t_{0,1}} \right), \dots \right. \\ \left. \dots, \frac{\Sc_m(\vartheta^*(t_0))}{t_{0,m}^2} \phi''\left(\frac{\Sc_m(\vartheta^*(t_0))}{t_{0,m}} \right) \right).
  \end{multline}
\end{thm}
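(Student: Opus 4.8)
The plan is to recognise $\vartheta^*$ as the solution of the stationarity equation associated with the inner minimisation in~\eqref{eq:theta:map:2} and to apply the implicit function theorem. Under Condition~\ref{cond:unicity:interior:t0}, the unique minimiser $\vartheta^*(t)$ lies in the open set $\mathring\Theta$ for every $t\in\Nc(t_0)$, and, as recorded before the statement of Lemma~\ref{lem:J2}, the map $\theta\mapsto D_\phi(\Sc(\theta)\mid t)$ is twice continuously differentiable on $\mathring\Theta$ for every $t\in(0,1)^m$. Writing $F(\theta,t)=\ol{J[D_\phi(\Sc(\cdot)\mid t)]}(\theta)\in\R^k$ for the $\theta$-gradient of $D_\phi(\Sc(\cdot)\mid t)$, the first-order optimality conditions therefore give
\begin{equation*}
F(\vartheta^*(t),t)=0_{\R^k},\qquad t\in\Nc(t_0).
\end{equation*}
First I would check that $F$ is continuously differentiable on the open set $\mathring\Theta\times(0,1)^m$: on this set $\Sc(\theta)\in(0,1)^m$ by Condition~\ref{cond:M}, so every ratio $\Sc_i(\theta)/t_i$ lies in $(0,\infty)$ where $\phi'$ and $\phi''$ are continuous by Condition~\ref{cond:phi:diff}, while $\Sc$ is twice continuously differentiable; hence $F$ is of class $C^1$.

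Next I would identify the two partial Jacobians of $F$ at $(\theta_0,t_0)$, with $\theta_0=\vartheta^*(t_0)$. Differentiating $F$ in $\theta$ reproduces, directly from the definition of $J_2$, the Hessian matrix $J_2[D_\phi(\Sc(\cdot)\mid t_0)](\theta_0)$ of~\eqref{eq:J2}, which is positive definite, hence invertible, by Condition~\ref{cond:invertibility}. Differentiating $F$ in $t$ is the only genuine computation: from $F_j(\theta,t)=\sum_{i=1}^m \phi'(\Sc_i(\theta)/t_i)\,\partial_{\theta_j}\Sc_i(\theta)$ one obtains, using $\partial_{t_l}\big(\Sc_l(\theta)/t_l\big)=-\Sc_l(\theta)/t_l^2$, that $\partial_t F(\theta_0,t_0)=-\,J[\Sc](\theta_0)^\top\Delta(t_0)$, where $\Delta(t_0)$ is the diagonal matrix in~\eqref{eq:Delta:t0}.

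The core of the argument is then the implicit function theorem applied to $F=0$ at $(\theta_0,t_0)$: since $\partial_\theta F(\theta_0,t_0)$ is invertible, there is an open neighbourhood of $t_0$ on which a unique continuously differentiable map $t\mapsto\theta(t)$ solves $F(\theta(t),t)=0$ with $\theta(t_0)=\theta_0$. To conclude that $\vartheta^*$ coincides with this branch I would invoke the continuity of $\vartheta^*$ at $t_0$ (Proposition~\ref{prop:cont:vartheta:t0}): because $\vartheta^*(t)\to\theta_0$ as $t\to t_0$, for $t$ sufficiently close to $t_0$ the zero $\vartheta^*(t)$ of $F(\cdot,t)$ falls inside the local uniqueness neighbourhood supplied by the theorem, forcing $\vartheta^*(t)=\theta(t)$. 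Hence $\vartheta^*$ is continuously differentiable at $t_0$, and the implicit function theorem formula
\begin{equation*}
J[\vartheta^*](t_0)=-\big(\partial_\theta F(\theta_0,t_0)\big)^{-1}\partial_t F(\theta_0,t_0),
\end{equation*}
combined with the two Jacobians just computed, yields~\eqref{eq:J:vartheta:map}.

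I expect the main obstacle to be precisely this identification step. The implicit function theorem delivers only the local uniqueness of a stationary branch of $F=0$, so some care is required to glue it to the globally unique $\phi$-projection $\vartheta^*$; this is where Condition~\ref{cond:unicity:interior:t0} (interiority, which produces the stationarity equation in the first place) and Proposition~\ref{prop:cont:vartheta:t0} (which keeps $\vartheta^*$ inside the local neighbourhood near $t_0$) are both indispensable. The remaining computations, namely reading off $\partial_\theta F$ from Lemma~\ref{lem:J2} and evaluating $\partial_t F$, are routine and introduce no further difficulty.
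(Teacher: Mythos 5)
Your proposal is correct and follows essentially the same route as the paper's proof: setting up the first-order condition $F(\vartheta^*(t),t)=0_{\R^k}$, applying the implicit function theorem at $(\vartheta^*(t_0),t_0)$ using the invertibility supplied by Condition~\ref{cond:invertibility}, identifying $\vartheta^*$ with the implicit branch via the continuity of $\vartheta^*$ at $t_0$ (Proposition~\ref{prop:cont:vartheta:t0}), and computing $\partial_t F(\vartheta^*(t_0),t_0)=-J[\Sc](\vartheta^*(t_0))^\top\Delta(t_0)$ to obtain~\eqref{eq:J:vartheta:map}. You correctly single out the branch-identification step as the point where Condition~\ref{cond:unicity:interior:t0} and the continuity result are both needed, which is exactly how the paper handles it.
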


The next corollary is an immediate consequence of~\eqref{eq:S:*:vartheta}, Theorem~\ref{thm:diff:vartheta} and the chain rule.

\begin{cor}[Differentiability of $\Sc^*$ at $t_0$]
  \label{cor:diff:S:*}
  Assume that Conditions~\ref{cond:unicity:interior:t0} and~\ref{cond:invertibility} hold. Then, the function $\Sc^*$ in~\eqref{eq:S:*:map} is continuously differentiable at $t_0 \in (0,1)^m$ with Jacobian matrix at $t_0$ given by
  \begin{equation}
    \label{eq:J:S:*:map}
    J[\Sc^*](t_0) = J[\Sc](\vartheta^*(t_0))  J[\vartheta^*](t_0),
  \end{equation}
where $J[\vartheta^*](t_0)$ is given in~\eqref{eq:J:vartheta:map}.
\end{cor}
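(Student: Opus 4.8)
The plan is to exploit the factorization $\Sc^* = \Sc \circ \vartheta^*$ provided by~\eqref{eq:S:*:vartheta} and to differentiate it via the chain rule, all the differentiability ingredients having already been assembled in the preceding results. First I would recall that, under Condition~\ref{cond:unicity:interior:t0}, the equivalent expression~\eqref{eq:theta:map:2} shows that $\vartheta^*$ maps the whole neighborhood $\Nc(t_0)$ into $\mathring \Theta$; in particular $\vartheta^*(t_0) \in \mathring \Theta$. Since $\Sc$ is twice continuously differentiable on the open set $\mathring \Theta$ by Condition~\ref{cond:M}, it is in particular continuously differentiable on a neighborhood of $\vartheta^*(t_0)$, with Jacobian $J[\Sc]$ continuous there.

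Next I would invoke Theorem~\ref{thm:diff:vartheta}: under Conditions~\ref{cond:unicity:interior:t0} and~\ref{cond:invertibility}, $\vartheta^*$ is continuously differentiable at $t_0$, meaning that it is differentiable on a neighborhood of $t_0$ (which, shrinking $\Nc(t_0)$ if necessary, I may assume to be contained in the set where $\vartheta^*$ takes values in $\mathring \Theta$) with Jacobian $J[\vartheta^*]$ continuous at $t_0$ and given by~\eqref{eq:J:vartheta:map}. At this point both $\vartheta^*$ and $\Sc$ are differentiable on the relevant (nested) neighborhoods, so the composition $\Sc^* = \Sc \circ \vartheta^*$ is differentiable on a neighborhood of $t_0$ and the chain rule yields $J[\Sc^*](t) = J[\Sc](\vartheta^*(t))\, J[\vartheta^*](t)$ there; evaluating at $t = t_0$ gives~\eqref{eq:J:S:*:map}.

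Finally I would verify the continuity of this Jacobian at $t_0$, which is exactly what upgrades plain differentiability to continuous differentiability at $t_0$. This follows by composing continuous maps: $\vartheta^*$ is continuous at $t_0$ (Proposition~\ref{prop:cont:vartheta:t0}, or simply from its differentiability) and $J[\Sc]$ is continuous at $\vartheta^*(t_0)$, hence $t \mapsto J[\Sc](\vartheta^*(t))$ is continuous at $t_0$; since $J[\vartheta^*]$ is continuous at $t_0$ by Theorem~\ref{thm:diff:vartheta}, the matrix product $J[\Sc](\vartheta^*(\cdot))\, J[\vartheta^*](\cdot)$ is continuous at $t_0$ as a product of continuous matrix-valued maps.

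The computation being essentially mechanical once the hypotheses are in place, I do not expect any substantive obstacle here. The only points requiring care are purely of a bookkeeping nature: ensuring that $\vartheta^*$ indeed sends a neighborhood of $t_0$ into the \emph{open} set $\mathring \Theta$, so that $\Sc$ is differentiable all along the composition, and reading ``continuously differentiable at $t_0$'' precisely as differentiability on a neighborhood together with continuity of the Jacobian at the point — which is exactly what the chain-rule argument above delivers.
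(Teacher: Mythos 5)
Your argument is correct and is exactly the route the paper takes: the paper states this corollary as an immediate consequence of the factorization $\Sc^* = \Sc \circ \vartheta^*$ in~\eqref{eq:S:*:vartheta}, Theorem~\ref{thm:diff:vartheta} and the chain rule, which is precisely what you spell out (with the additional, correct, bookkeeping that $\vartheta^*$ maps a neighborhood of $t_0$ into $\mathring\Theta$ where $\Sc$ is continuously differentiable). No gaps.
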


\begin{remark}
  \label{rem:conds}
  When $t_0$ is known, one can attempt to verify the conditions of Theorem~\ref{thm:diff:vartheta} or Corollary~\ref{cor:diff:S:*} analytically, or at least empirically. For Condition~\ref{cond:unicity:interior:t0}, one could choose a few vectors $t$ in a ``neighborhood'' of $t_0$ and attempt to verify (at least numerically) that the function $D_\phi(\Sc(\cdot) \mid t)$ has a unique minimum on $\mathring \Theta \subset \R^k$. When $k \in \{1,2\}$ in particular, the preceding verification could be graphical (see Section~\ref{sec:binom}). As to Condition~\ref{cond:invertibility}, one could simply compute the matrix $J_2[D_\phi(\Sc(\cdot) \mid t_0)](\vartheta^*(t_0))$ and attempt to invert it. In a statistical context, $t_0$ is typically unknown, though. Naturally, one can simply replace $t_0$ by a consistent estimator $t_n$ in the previous strategies. However, when the set $\Mc$ is convex, there is typically no need for such approximate checks as Conditions~\ref{cond:unicity:interior:t0} and~\ref{cond:invertibility} are automatically verified for a rather large class of \phidivergences\, and many situations of practical interest, as exposed in Section~\ref{sec:convex}. \qed
\end{remark}

\begin{remark}
  Under Conditions~\ref{cond:unicity:interior:t0} and~\ref{cond:invertibility}, Theorem~\ref{thm:diff:vartheta} and Corollary~\ref{cor:diff:S:*} imply that the functions $J[\vartheta^*]$ and $J[\Sc^*]$ are continuous at $t_0$. In statistical applications, given a consistent estimator $t_n$ of (the unknown) $t_0$, we thus immediately obtain from the continuous mapping theorem that $J[\vartheta^*](t_0)$ (resp.\ $J[\Sc^*](t_0)$) is consistently estimated by $J[\vartheta^*](t_n)$ (resp.\ $J[\Sc^*](t_n)$). \qed
\end{remark}

We end this subsection by stating two simple conditions which imply Condition~\ref{cond:unicity:interior:t0} when combined with Condition~\ref{cond:unicity:t0}.

\begin{cond}[Interior]
  \label{cond:interior}
  The function $\Sc$ in Condition~\ref{cond:M} satisfies $\Sc(\mathring \Theta) = \Mc \cap (0,1)^m$.
\end{cond}

\begin{cond}[Same support] 
  \label{cond:support}
   For any $t \in (0,1)^m$, all $\phi$-projections of~$t$ on $\Mc = \Sc(\bar \Theta)$ belong to $(0,1)^m$.
\end{cond}

Condition~\ref{cond:interior} is satisfied in many applications (see \change{Section~\ref{sec:app:asym}}) while the verification of Condition~\ref{cond:support} is discussed in the forthcoming subsection. The proof of the next result is given in Section~\ref{proofs:param}.

\begin{lem}
  \label{lem:cond:unicity:interior}
  Conditions~\ref{cond:unicity:t0}, \ref{cond:interior} and~\ref{cond:support} imply Condition~\ref{cond:unicity:interior:t0}.
\end{lem}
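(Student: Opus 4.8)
The plan is to reuse the very neighborhood $\Nc(t_0)$ supplied by Condition~\ref{cond:unicity:t0} and to verify that, on this neighborhood, the unique $\phi$-projection already lands in $\Sc(\mathring \Theta)$. I would fix an arbitrary $t \in \Nc(t_0)$ and write $s^* = \Sc^*(t)$ for the $\phi$-projection of $t$ on $\Mc = \Sc(\bar \Theta)$, whose existence and uniqueness are guaranteed by Condition~\ref{cond:unicity:t0}. The entire argument then reduces to locating $s^*$ precisely within $\Mc$.

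First I would observe that, since $\Nc(t_0) \subset (0,1)^m$, we have $t \in (0,1)^m$, so that Condition~\ref{cond:support} applies: every $\phi$-projection of $t$ on $\Mc$, and in particular $s^*$, belongs to $(0,1)^m$. Combining $s^* \in (0,1)^m$ with $s^* \in \Mc$ yields $s^* \in \Mc \cap (0,1)^m$. At this point I would invoke Condition~\ref{cond:interior}, which identifies $\Mc \cap (0,1)^m$ with $\Sc(\mathring \Theta)$; hence $s^* \in \Sc(\mathring \Theta)$, meaning that there exists $\theta^* \in \mathring \Theta$ with $s^* = \Sc(\theta^*)$. The uniqueness of such a $\theta^*$ is then immediate from the injectivity of $\Sc$ on $\bar \Theta$ assumed in Condition~\ref{cond:M}: if $\Sc(\theta_1) = \Sc(\theta_2) = s^*$ with $\theta_1, \theta_2 \in \mathring \Theta \subset \bar \Theta$, then $\theta_1 = \theta_2$. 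Since $t \in \Nc(t_0)$ was arbitrary, this establishes Condition~\ref{cond:unicity:interior:t0}.

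There is no real obstacle of substance here, as the lemma is a direct chaining of its three hypotheses; the only point requiring a moment's care is to notice that the neighborhood $\Nc(t_0)$ furnished by Condition~\ref{cond:unicity:t0} is contained in $(0,1)^m$, which is exactly what licenses the use of Condition~\ref{cond:support} (stated for all $t \in (0,1)^m$). The injectivity clause of Condition~\ref{cond:M} then supplies the uniqueness of $\theta^*$ at no extra cost.
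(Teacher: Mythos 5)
Your proof is correct and follows essentially the same route as the paper's: use the neighborhood from Condition~\ref{cond:unicity:t0}, apply Condition~\ref{cond:support} to place the unique projection in $(0,1)^m$, invoke Condition~\ref{cond:interior} to identify $\Mc \cap (0,1)^m$ with $\Sc(\mathring \Theta)$, and get uniqueness of $\theta^*$ from the injectivity of $\Sc$ in Condition~\ref{cond:M}. No gaps.
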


\subsection{The case of $\Mc$ convex}
\label{sec:convex}

Recall that Conditions~\ref{cond:phi:strict:convex},~\ref{cond:phi:diff} and~\ref{cond:M} are supposed to hold. Below, using results of \cite{GieRef17}, we first prove (see Section~\ref{proofs:convex}) that Condition~\ref{cond:unicity:t0} is automatically satisfied under an additional assumption of convexity of the set $\Mc$ on which we wish to $\phi$-project the vector $t_0 \in (0,1)^m$.

\begin{prop}
  \label{prop:cond:unicity:t0}
  Assume that $\Mc = \Sc(\bar \Theta) \subset [0,1]^m$ is convex. Then, for any $t \in (0,1)^m$, the $\phi$-projection $s^*$ of $t$ on $\Mc = \Sc(\bar \Theta)$ (exists and) is unique, which implies that Condition~\ref{cond:unicity:t0} holds.
\end{prop}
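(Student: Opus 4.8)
The plan is to reduce the statement directly to the unicity result already available for convex sets, Proposition~\ref{prop:phi:proj:convex}, by exploiting the fact that every $t \in (0,1)^m$ has full support. The crucial observation is that if $t \in (0,1)^m$, then $t_i > 0$ for all $i$, so $\supp(t) = \{1,\dots,m\}$ and consequently $[0,1]^m_{\supp(t)} = [0,1]^m$. This means that the set on which we project always satisfies $\Mc \subset [0,1]^m = [0,1]^m_{\supp(t)}$, so that part~(i) of Proposition~\ref{prop:phi:proj:convex} applies with no need to invoke Condition~\ref{cond:phi:lim} or to pass to an intersection with $[0,1]^m_{\supp(t)}$.

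Concretely, I would proceed as follows. First, fix $t \in (0,1)^m$ and record that $\supp(t) = \{1,\dots,m\}$, whence $\Mc \subset [0,1]^m_{\supp(t)}$. Second, I would check the hypotheses of Proposition~\ref{prop:phi:proj:convex}: by Condition~\ref{cond:M} the set $\Mc = \Sc(\bar \Theta)$ is nonempty and compact, hence closed, and it is convex by assumption and contained in $[0,1]^m$; moreover Condition~\ref{cond:phi:strict:convex} is among the standing assumptions. Existence of a $\phi$-projection then follows from Proposition~\ref{prop:existence:phi:proj} (since $\Mc$ is compact), while part~(i) of Proposition~\ref{prop:phi:proj:convex} yields its uniqueness. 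This establishes that for every $t \in (0,1)^m$ the $\phi$-projection of $t$ on $\Mc$ exists and is unique.

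Finally, to obtain Condition~\ref{cond:unicity:t0}, I would note that the uniqueness just proved holds for \emph{all} $t \in (0,1)^m$, and that $(0,1)^m$ is open and contains $t_0$. Hence any open neighborhood of $t_0$ contained in $(0,1)^m$ --- for instance $\Nc(t_0) = (0,1)^m$ itself --- witnesses Condition~\ref{cond:unicity:t0}. There is no genuine obstacle here: the entire content of the argument is the support identity $\supp(t) = \{1,\dots,m\}$, which turns the conditional statement~(i) of Proposition~\ref{prop:phi:proj:convex} into an unconditional one on $(0,1)^m$. The only point requiring a moment's care is to keep existence (from compactness, via Proposition~\ref{prop:existence:phi:proj}) separate from uniqueness (from strict convexity together with convexity of $\Mc$, via Proposition~\ref{prop:phi:proj:convex}(i)).
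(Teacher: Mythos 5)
Your proposal is correct and follows essentially the same route as the paper's proof: both arguments rest on the observation that $[0,1]^m_{\supp(t)}=[0,1]^m$ for $t\in(0,1)^m$, combined with the compactness of $\Mc=\Sc(\bar\Theta)$ for existence (Proposition~\ref{prop:existence:phi:proj}) and Proposition~\ref{prop:phi:proj:convex}~(i) under Condition~\ref{cond:phi:strict:convex} for uniqueness. Your explicit remark that $\Nc(t_0)=(0,1)^m$ itself witnesses Condition~\ref{cond:unicity:t0} is a slightly more careful spelling-out of the paper's parenthetical conclusion, but it is not a different argument.
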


Since Condition~\ref{cond:unicity:t0} holds when $\Mc$ is convex, from Lemma~\ref{lem:cond:unicity:interior}, it suffices to verify Conditions~\ref{cond:interior} and~\ref{cond:support} to obtain Condition~\ref{cond:unicity:interior:t0}. 

\begin{remark}
  \label{rem:KL:support}
  Restricting attention to probability vectors, Condition~\ref{cond:support} holds for the Kullback--Leibler divergence (that is, for $I$-projections)  since $\Mc = \Sc(\bar \Theta)$ contains probability vectors in $(0,1)^m$ by construction, as $\Sc(\mathring \Theta) \subset \Mc \cap (0,1)^m$ and $\Sc(\mathring \Theta) \neq \emptyset$ by Condition~\ref{cond:M}. Indeed, when one wishes to $I$-project a probability vector $q_0$ on a closed convex subset containing at least one probability vector with the same support as $q_0$, it is known from \citet[Theorem~2.1, Theorem~2.2 and the subsequent remark]{Csi75} that the support of the unique $I$-projection will be equal to that of $q_0$.  \qed 
\end{remark}

Let us now turn to Condition~\ref{cond:invertibility}, the other key condition in Theorem~\ref{thm:diff:vartheta}. To verify it when $t_0$ is unknown (see Remark~\ref{rem:conds}), one could rely on more general conditions that imply Condition~\ref{cond:invertibility}. One such condition is as follows.

\begin{cond}[Strong convexity of $D_\phi(\Sc(\cdot) \mid t)$]
  \label{cond:S:strong}
  The set $\bar \Theta$ is convex and, for any $t \in (0,1)^m$, the function $D_\phi(\Sc(\cdot) \mid t)$ from $\bar \Theta$ to $\R$ is strongly convex.
\end{cond}

The next proposition is proven in Section~\ref{proofs:convex}.

\begin{prop}
  \label{prop:S:strong:invert}
  Condition~\ref{cond:S:strong} implies that, for any $t \in (0,1)^m$ and $\theta \in \mathring \Theta$, the $k \times k$ matrix $J_2[D_\phi(\Sc(\cdot) \mid t)](\theta)$ is positive definite, and thus that Condition~\ref{cond:invertibility} holds. 
\end{prop}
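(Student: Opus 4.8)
The plan is to recognize that, for fixed $t \in (0,1)^m$, the matrix $J_2[D_\phi(\Sc(\cdot) \mid t)](\theta)$ is nothing but the Hessian matrix of the scalar function $H_t : \bar\Theta \to \R$ defined by $H_t(\theta) = D_\phi(\Sc(\theta) \mid t)$, and then to invoke the second-order characterization of strong convexity recalled in Section~\ref{sec:prelim}.

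First I would observe that, by the definition of $J_2$ given just before Lemma~\ref{lem:J2}, the matrix $J_2[H_t](\theta) = J[\overline{J[H_t]}](\theta)$ is precisely the $k \times k$ matrix whose $(i,j)$ entry is $\partial^2 H_t(\theta)/\partial\theta_i\partial\theta_j$; in particular, for $t = t_0$ and $\theta = \vartheta^*(t_0)$, it coincides with the matrix appearing in Condition~\ref{cond:invertibility}. As noted in the text preceding Lemma~\ref{lem:J2} and verified in its proof, Conditions~\ref{cond:phi:diff} and~\ref{cond:M} guarantee that $H_t$ is twice continuously differentiable on $\mathring\Theta$, so this Hessian is well-defined and continuous there, and the explicit expression~\eqref{eq:J2} may be used if desired.

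Next I would apply the equivalence stated in the strong-convexity discussion of Section~\ref{sec:prelim} (following Nesterov). Under Condition~\ref{cond:S:strong}, the set $\bar\Theta$ is convex and $H_t$ is strongly convex on $\bar\Theta$; taking $\Yc = \bar\Theta$, so that $\mathring\Yc = \mathring\Theta$, the cited characterization yields a constant $\kappa_{H_t} \in (0,\infty)$ such that $J_2[H_t](\theta) - \kappa_{H_t} I_k$ is positive semi-definite for every $\theta \in \mathring\Theta$. Consequently, for any nonzero $v \in \R^k$,
$$
v^\top J_2[H_t](\theta)\, v \geq \kappa_{H_t} \| v \|_2^2 > 0,
$$
which is exactly the assertion that $J_2[D_\phi(\Sc(\cdot) \mid t)](\theta)$ is positive definite. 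Specializing to $t = t_0$ and $\theta = \vartheta^*(t_0)$ — which lies in $\mathring\Theta$ under Condition~\ref{cond:unicity:interior:t0} — gives the positive definiteness of the matrix in Condition~\ref{cond:invertibility}, so that condition holds.

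The argument is essentially a dictionary between three descriptions of the same object, so there is no single hard computation to carry out. The only point requiring care is the passage between strong convexity on the closed set $\bar\Theta$ and the Hessian inequality on its interior $\mathring\Theta$: the characterization recalled in Section~\ref{sec:prelim} presupposes a function that is twice continuously differentiable on the interior of a convex set and strongly convex on the set itself. I must therefore check that Condition~\ref{cond:S:strong} supplies exactly these hypotheses — convexity of $\bar\Theta$ and strong convexity of $H_t$ on $\bar\Theta$ are assumed directly, while the twice continuous differentiability on $\mathring\Theta$ is furnished by Conditions~\ref{cond:phi:diff} and~\ref{cond:M} through Lemma~\ref{lem:J2} — after which the conclusion is immediate for every $t \in (0,1)^m$ and every $\theta \in \mathring\Theta$.
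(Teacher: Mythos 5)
Your proposal is correct and follows essentially the same route as the paper's own proof: it invokes the twice continuous differentiability of $D_\phi(\Sc(\cdot)\mid t)$ established in the proof of Lemma~\ref{lem:J2}, applies the second-order characterization of strong convexity (Nesterov's Theorem~2.1.11) to get $J_2[D_\phi(\Sc(\cdot)\mid t)](\theta) - \kappa\, I_k$ positive semi-definite on $\mathring\Theta$, and deduces positive definiteness before specializing to $\theta = \vartheta^*(t_0) \in \mathring\Theta$. No gaps.
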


We have (from Proposition~\ref{prop:D:phi:strong:convex} and Table~\ref{tab:phi}) that, for many \phidivergences\, and any $t \in (0,1)^m$, the function $D_\phi(\cdot \mid t)$ is strongly convex on $[0,1]^m$. Thus, Proposition~\ref{prop:S:strong:invert} suggests to consider conditions on $\Sc$ and $\Theta$ under which Condition~\ref{cond:S:strong} holds. A simple such condition is as follows.

\begin{cond}[$\Sc$ is affine]
  \label{cond:S:affine}
  The set $\bar \Theta$ is convex and the function $\Sc$ from $\bar \Theta$ to $[0,1]^m$ is affine, that is, there exists an $m \times k$ matrix $A$ and $\gamma \in \R^m$ such that $\Sc(\theta) = A \theta + \gamma$, $\theta \in \bar \Theta$.
\end{cond}

The following lemma is proven in Section~\ref{proofs:convex}.

\begin{lem}[Strong convexity of $D_\phi(\Sc(\cdot) \mid t)$]
  \label{lem:cond:S}
Conditions~\ref{cond:phi:strong:convex} and~\ref{cond:S:affine} imply that Condition~\ref{cond:S:strong} holds.
\end{lem}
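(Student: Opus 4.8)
The plan is to combine the strong convexity of $D_\phi(\cdot \mid t)$ on $[0,1]^m$, guaranteed by Proposition~\ref{prop:D:phi:strong:convex}, with the affine structure of $\Sc$, taking care to exploit the injectivity of $\Sc$ imposed by Condition~\ref{cond:M}. First I would observe that, since $t \in (0,1)^m$, one has $\supp(t) = \{1,\dots,m\}$ and thus $[0,1]^m_{\supp(t)} = [0,1]^m$; hence Condition~\ref{cond:phi:strong:convex} together with Proposition~\ref{prop:D:phi:strong:convex} provides a constant $\kappa > 0$ such that $D_\phi(\cdot \mid t)$ is strongly convex on $[0,1]^m$ with constant $\kappa$. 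The convexity of $\bar \Theta$ demanded by Condition~\ref{cond:S:strong} is immediate from Condition~\ref{cond:S:affine}.

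Next I would establish that the matrix $A$ in $\Sc(\theta) = A\theta + \gamma$ has full column rank. This is the crux: strong convexity is generally \emph{not} preserved under composition with an arbitrary affine map, failing precisely when $A$ has a nontrivial kernel. Here the injectivity of $\Sc$ (from Condition~\ref{cond:M}) combined with $\mathring \Theta \neq \emptyset$ rules this out. Indeed, if $Av = 0$ for some $v \neq 0$, then picking $\theta \in \mathring \Theta$ and $\eps \neq 0$ small enough that $\theta + \eps v \in \bar \Theta$, affinity gives $\Sc(\theta + \eps v) = \Sc(\theta) + \eps A v = \Sc(\theta)$, contradicting injectivity since $\theta \neq \theta + \eps v$. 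Consequently $A$ is injective as a linear map, so $A^\top A$ is positive definite; let $\lambda_{\min} > 0$ denote its smallest eigenvalue.

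The conclusion then follows from a short computation. For $\theta_1, \theta_2 \in \bar \Theta$ and $\alpha \in [0,1]$, affinity yields $\Sc(\alpha \theta_1 + (1-\alpha)\theta_2) = \alpha \Sc(\theta_1) + (1-\alpha)\Sc(\theta_2)$, so applying the strong convexity inequality for $D_\phi(\cdot \mid t)$ at the points $\Sc(\theta_1), \Sc(\theta_2) \in [0,1]^m$ produces the error term $-\frac{\kappa}{2}\alpha(1-\alpha)\|\Sc(\theta_1) - \Sc(\theta_2)\|_2^2$. Writing $\Sc(\theta_1) - \Sc(\theta_2) = A(\theta_1 - \theta_2)$ and using the bound $\|A(\theta_1 - \theta_2)\|_2^2 = (\theta_1 - \theta_2)^\top A^\top A (\theta_1 - \theta_2) \geq \lambda_{\min} \|\theta_1 - \theta_2\|_2^2$ gives exactly the strong convexity inequality for $D_\phi(\Sc(\cdot) \mid t)$ on $\bar \Theta$ with constant $\kappa \lambda_{\min} > 0$. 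This establishes Condition~\ref{cond:S:strong}.

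The main obstacle is the full-column-rank step: without it the composition argument collapses (the error term can vanish along $\ker A$), so the heart of the proof is recognizing that Condition~\ref{cond:S:affine} by itself is insufficient and that the injectivity baked into Condition~\ref{cond:M} is precisely what makes the affine change of variables preserve strong convexity.
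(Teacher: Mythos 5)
Your proposal is correct and follows essentially the same route as the paper: compose the strong convexity of $D_\phi(\cdot\mid t)$ on $[0,1]^m$ (Proposition~\ref{prop:D:phi:strong:convex}) with the affine map and lower-bound $\|A(\theta-\theta')\|_2^2$ by a positive multiple of $\|\theta-\theta'\|_2^2$ using the injectivity of $\Sc$. The only cosmetic difference is that the paper obtains the positive constant as $\min_{x\in S^{k-1}}\|Ax\|_2^2$ via a contradiction argument, whereas you phrase it as the smallest eigenvalue of $A^\top A$; your explicit use of $\mathring\Theta\neq\emptyset$ to pass from injectivity of $\Sc$ on $\bar\Theta$ to injectivity of the linear map is, if anything, slightly more careful than the paper's.
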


Note that, from Lemma~\ref{lem:phi:strict:convex}, Condition~\ref{cond:phi:strong:convex} can be viewed as a strengthening of Condition~\ref{cond:phi:strict:convex}. The previous derivations lead to the following result, proven in Section~\ref{proofs:convex}.

\begin{cor}[Differentiability of $\vartheta^*$ and $\Sc^*$ at $t_0$]
  \label{cor:diff:convex}
  Assume that $\Mc = \Sc(\bar \Theta)$ is convex and that Conditions~\ref{cond:phi:strong:convex},~\ref{cond:interior},~\ref{cond:support} and~\ref{cond:S:affine} hold. Then, the function $\vartheta^*$ in~\eqref{eq:theta:map:2} is continuously differentiable at $t_0 = (t_{0,1},\dots,t_{0,m}) \in (0,1)^m$ with Jacobian matrix at $t_0$ given by
  \begin{multline}
    \label{eq:J:vartheta:map:affine}
    J[\vartheta^*](t_0) = \left[ A^\top \diag \left(\frac{1}{t_{0,1}}\phi'' \left( \frac{\Sc_1(\vartheta^*(t_0))}{t_{0,1}} \right), \dots \right. \right. \\ \left. \left. \dots, \frac{1}{t_{0,m}}\phi'' \left( \frac{\Sc_m(\vartheta^*(t_0))}{t_{0,m}} \right)  \right)   A \right]^{-1} A^\top \Delta(t_0),
\end{multline}
where $\Delta(t_0)$ is defined in~\eqref{eq:Delta:t0}, and the function $\Sc^*$ in~\eqref{eq:S:*:map} is continuously differentiable at $t_0$ with Jacobian matrix at $t_0$ given by $J[\Sc^*](t_0) = A  J[\vartheta^*](t_0)$.
\end{cor}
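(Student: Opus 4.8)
The plan is to recognize Corollary~\ref{cor:diff:convex} as an assembly of the results already established in Sections~\ref{sec:param} and~\ref{sec:convex}: under the stated hypotheses I would verify that both Condition~\ref{cond:unicity:interior:t0} and Condition~\ref{cond:invertibility} hold, invoke Theorem~\ref{thm:diff:vartheta} to obtain the continuous differentiability of $\vartheta^*$ at $t_0$ together with the Jacobian formula~\eqref{eq:J:vartheta:map}, and finally simplify that formula using the affine structure of $\Sc$ imposed by Condition~\ref{cond:S:affine}. The differentiability of $\Sc^*$ and its Jacobian will then follow directly from Corollary~\ref{cor:diff:S:*}.

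First I would establish Condition~\ref{cond:unicity:interior:t0}. Since $\Mc = \Sc(\bar \Theta)$ is assumed convex, Proposition~\ref{prop:cond:unicity:t0} guarantees that Condition~\ref{cond:unicity:t0} holds. Combining this with the hypotheses Condition~\ref{cond:interior} and Condition~\ref{cond:support}, Lemma~\ref{lem:cond:unicity:interior} yields Condition~\ref{cond:unicity:interior:t0}.

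Next I would establish Condition~\ref{cond:invertibility}. By hypothesis, Conditions~\ref{cond:phi:strong:convex} and~\ref{cond:S:affine} hold, so Lemma~\ref{lem:cond:S} gives Condition~\ref{cond:S:strong}, namely that $\bar \Theta$ is convex and $D_\phi(\Sc(\cdot) \mid t)$ is strongly convex on $\bar \Theta$ for every $t \in (0,1)^m$. Proposition~\ref{prop:S:strong:invert} then ensures that the matrix $J_2[D_\phi(\Sc(\cdot) \mid t_0)](\vartheta^*(t_0))$ is positive definite, which is precisely Condition~\ref{cond:invertibility}; in particular this matrix is invertible, so that~\eqref{eq:J:vartheta:map} is well defined.

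With Conditions~\ref{cond:unicity:interior:t0} and~\ref{cond:invertibility} in hand, Theorem~\ref{thm:diff:vartheta} applies and gives the continuous differentiability of $\vartheta^*$ at $t_0$ with Jacobian~\eqref{eq:J:vartheta:map}. It then remains to specialize~\eqref{eq:J2} to the affine case. Under Condition~\ref{cond:S:affine}, $\Sc(\theta) = A \theta + \gamma$, so that $J[\Sc](\theta) = A$ is constant and $J_2[\Sc](\theta) = 0$; substituting these into~\eqref{eq:J2} makes the Kronecker-product term vanish and reduces $J_2[D_\phi(\Sc(\cdot) \mid t_0)](\vartheta^*(t_0))$ to $A^\top \diag(\cdots) A$, which yields~\eqref{eq:J:vartheta:map:affine} after inserting $J[\Sc](\vartheta^*(t_0))^\top = A^\top$ and $\Delta(t_0)$ from~\eqref{eq:Delta:t0}. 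Finally, Corollary~\ref{cor:diff:S:*} gives $J[\Sc^*](t_0) = J[\Sc](\vartheta^*(t_0)) J[\vartheta^*](t_0) = A\, J[\vartheta^*](t_0)$. The main obstacle is essentially bookkeeping: one must check carefully, under the column-major vectorization conventions introduced before Lemma~\ref{lem:J2}, that an affine $\Sc$ indeed has constant Jacobian $A$ and vanishing second-order Jacobian $J_2[\Sc] = 0$, since all the analytical content has already been isolated in the cited propositions and lemmas.
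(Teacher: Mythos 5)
Your proposal is correct and follows essentially the same route as the paper's own proof: Proposition~\ref{prop:cond:unicity:t0} plus Lemma~\ref{lem:cond:unicity:interior} to get Condition~\ref{cond:unicity:interior:t0}, then Lemma~\ref{lem:cond:S} plus Proposition~\ref{prop:S:strong:invert} to get Condition~\ref{cond:invertibility}, followed by Theorem~\ref{thm:diff:vartheta} and Corollary~\ref{cor:diff:S:*}. Your explicit specialization of~\eqref{eq:J2} to the affine case ($J[\Sc] = A$, $J_2[\Sc] = 0$) is a detail the paper leaves implicit, and it is carried out correctly.
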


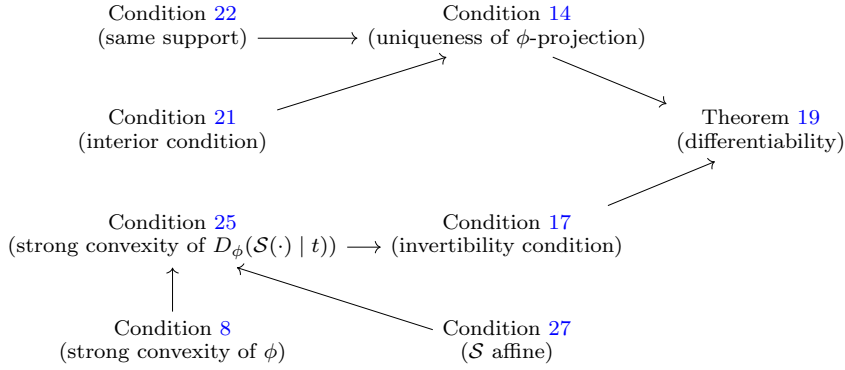
\begin{figure}[t!]
  \centering
  \begin{equation*}
    \begin{tikzcd}[row sep=2em, column sep=0.5em]
      |[alias=condsupport]|  \shortstack[c]{ \text{Condition \ref{cond:support}} \\ (same support)}  & |[alias=conduniqproj]| \shortstack{\text{Condition \ref{cond:unicity:interior:t0}}  \\ (unicity of $\phi$-projection)} \\
      |[alias=condint]| \shortstack{\text{Condition \ref{cond:interior}} \\ (interior condition)} & & |[alias=thmdiff]| \shortstack{\text{Theorem \ref{thm:diff:vartheta}} \\ (differentiability)}  \\
      |[alias=condSstrong]|  \shortstack{\text{Condition \ref{cond:S:strong}} \\ (strong convexity of $D_\phi(\Sc(\cdot) \mid t)$)} & |[alias=condinvert]| \shortstack{\text{Condition \ref{cond:invertibility}} \\ (invertibility condition)}\\
      |[alias=condphistrongconvex]|  \shortstack{\text{Condition \ref{cond:phi:strong:convex}} \\ (strong convexity of $\phi$)} &  |[alias=condSaffine]|  \shortstack{\text{Condition \ref{cond:S:affine}} \\ ($\Sc$ affine)} 
      %
      \arrow[from = conduniqproj, to=thmdiff]
      \arrow[from = condinvert, to=thmdiff]
      \arrow[from = condint, to=conduniqproj]
      \arrow[from= condsupport, to = conduniqproj ]
      \arrow[from= condSstrong, to = condinvert ]
      \arrow[from= condphistrongconvex, to = condSstrong ]
      \arrow[from= condSaffine, to = condSstrong ]
    \end{tikzcd}
  \end{equation*}
  \caption{Diagram summarizing the various implications of conditions for differentiability when the set $\Mc$ of interest is convex.}
  \label{fig:diagram}
\end{figure}

Figure~\ref{fig:diagram} summarizes the various implications of conditions leading to Corollary~\ref{cor:diff:convex}. The only condition in that result that may not be easily verifiable is Condition~\ref{cond:support}. Its verification would obviously be immediate if $t_0$ was known and its $\phi$-projection on $\Mc$ could be computed. However, as discussed in Remark~\ref{rem:conds}, this is usually not the case in a statistical context. From Remark~\ref{rem:KL:support}, though, we know that Condition~\ref{cond:support} is automatically verified for the Kullback--Leibler divergence when we restrict attention to probability vectors. Using results of \cite{Rus87}, we can prove (see Section~\ref{proofs:convex}) that this may also be the case for other \phidivergences\, provided that Condition~\ref{cond:S:affine} holds and $\Theta$ is defined by linear inequalities. 

\begin{prop}
  \label{prop:cond:support}
  Assume that Condition~\ref{cond:S:affine} holds, $\lim_{x \to 0^+} \phi'(x) = - \infty$, $\Theta \subset \R^k$ is defined by linear inequalities and $\Mc = \Sc(\bar \Theta) \subset [0,1]^m$ is a convex subset of probability vectors. Then, for any probability vector $q \in (0,1)^m$, the (unique) $\phi$-projection $p^*$ of $q$ on $\Mc = \Sc(\bar \Theta)$ belongs to $(0,1)^m$.
\end{prop}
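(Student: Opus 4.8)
The plan is to argue by contradiction, using a one-parameter perturbation of the projection towards a full-support point of $\Mc$ and exploiting the steepness hypothesis $\lim_{x \to 0^+} \phi'(x) = -\infty$. First I would record the structural facts available for free. By Proposition~\ref{prop:cond:unicity:t0}, since $\Mc$ is convex and $q \in (0,1)^m$, the $\phi$-projection $p^*$ of $q$ on $\Mc$ exists and is unique. By Condition~\ref{cond:M}, the set $\Sc(\mathring \Theta) \subset \Mc \cap (0,1)^m$ is nonempty, so we may fix a full-support vector $\tilde p \in \Mc$ with $\tilde p \in (0,1)^m$. Moreover, the existence of a probability vector $q \in (0,1)^m$ forces $m \geq 2$, so that any probability vector with all components strictly positive automatically lies in $(0,1)^m$; it therefore suffices to prove $\supp(p^*) = \{1,\dots,m\}$.

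Suppose, for contradiction, that $p^*_{i_0} = 0$ for some index $i_0$. Consider the segment $p_\lambda = (1-\lambda)p^* + \lambda \tilde p$, $\lambda \in [0,1]$, which lies in $\Mc$ by convexity and consists of probability vectors with $p_{\lambda,i} > 0$ for all $i$ whenever $\lambda \in (0,1]$ (because $\tilde p_i > 0$). Using $q_i > 0$ and the form of $f$ in~\eqref{eq:f}, define $g(\lambda) = D_\phi(p_\lambda \mid q) = \sum_{i=1}^m q_i \phi(p_{\lambda,i}/q_i)$. I would then check that $g$ is right-continuous at $0$ (by continuity of $\phi$ on $[0,\infty)$) and continuously differentiable on $(0,1)$, with
\[
g'(\lambda) = \sum_{i=1}^m \phi'\left(\frac{p_{\lambda,i}}{q_i}\right)(\tilde p_i - p^*_i),
\]
the interchange of differentiation and the finite sum being justified by Condition~\ref{cond:phi:diff}.

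The crux is the behaviour of $g'(\lambda)$ as $\lambda \to 0^+$. For each index $i$ with $p^*_i > 0$ one has $p_{\lambda,i}/q_i \to p^*_i/q_i > 0$, so by continuity of $\phi'$ those terms converge to finite limits. For each index $i$ with $p^*_i = 0$ (there is at least $i_0$) one has $p_{\lambda,i}/q_i = \lambda \tilde p_i/q_i \to 0^+$, so $\phi'(p_{\lambda,i}/q_i) \to -\infty$ by hypothesis, while the coefficient $\tilde p_i - p^*_i = \tilde p_i > 0$; hence each such term tends to $-\infty$. Since every blow-up term carries a strictly positive coefficient, no cancellation can occur, and $g'(\lambda) \to -\infty$. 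In particular $g'(\lambda) < 0$ on some interval $(0,\delta)$, so $g$ is strictly decreasing there; combined with right-continuity at $0$ this gives $g(\lambda) < g(0) = D_\phi(p^* \mid q)$ for small $\lambda > 0$, contradicting the optimality of $p^*$ since $p_\lambda \in \Mc$. This forces $p^* \in (0,1)^m$.

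The main subtlety is turning the pointwise blow-up $\phi'(\cdot) \to -\infty$ into a genuine strict decrease of $g$ near $\lambda = 0$: $g$ is only differentiable on the open interval, and the summands with $p^*_i = 0$ approach the boundary value $\phi(0)$, so one must justify passing from ``$g'(0^+)=-\infty$'' to ``$g(\lambda) < g(0)$'' via a mean-value argument on $[\eps,\lambda]$ with $\eps \to 0^+$. I note that this elementary perturbation argument uses only the convexity of $\Mc$, the interior point supplied by Condition~\ref{cond:M}, and the steepness condition, and does not appear to require the affine form of $\Sc$ or the linear-inequality description of $\Theta$; those extra hypotheses are rather the natural setting for the alternative dual route via the Kuhn--Tucker characterization of the $\phi$-projection of \cite{Rus87}, from which positivity of each $p^*_i$ can be read off the stationarity relation expressing $\phi'(p^*_i/q_i)$ as an affine function of the constraint multipliers.
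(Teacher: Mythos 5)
Your argument is correct, but it follows a genuinely different route from the paper. The paper's proof is algebraic and dual in nature: it uses Condition~\ref{cond:S:affine} and the pseudo-inverse formula of Lemma~\ref{lem:S:inv} to rewrite $\Mc$ as a set of probability vectors defined by linear inequalities, then invokes Theorem~2~(c) of \cite{Rus87} to obtain the explicit multiplier representation $p_i^* = q_i\,\phi'^{-1}\bigl(c_0 + \sum_{j=1}^d c_j f_j(i)\bigr)$, from which positivity of every $p_i^*$ is read off because $\phi'^{-1}$ takes strictly positive values when $\lim_{x\to 0^+}\phi'(x)=-\infty$. Your proof instead perturbs a hypothetical boundary minimizer along the segment towards the full-support point $\tilde p \in \Sc(\mathring\Theta)$ guaranteed by Condition~\ref{cond:M}, and shows the one-sided derivative of $\lambda \mapsto D_\phi(p_\lambda \mid q)$ blows down to $-\infty$, contradicting optimality; the mean-value step you flag (passing from $g'<0$ on $(0,\delta)$ to $g(\lambda)<g(0)$ via continuity of $g$ at $0$, which holds since $q\in(0,1)^m$ makes $g(0)$ finite and $\phi$ is continuous at $0$) is routine and closes the gap. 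Your closing observation is accurate and worth retaining: the perturbation argument uses only the convexity of $\Mc$, the interior point from Condition~\ref{cond:M}, Condition~\ref{cond:phi:diff}, and the steepness of $\phi'$, so it dispenses with the hypotheses that $\Sc$ is affine and that $\Theta$ is cut out by linear inequalities, and thus proves a slightly more general statement. What the paper's route buys in exchange is the explicit functional form of the projection (the multiplier representation), which is of independent interest but is not needed merely to establish full support.
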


\begin{remark}
  \label{rem:support}
  For the \phidivergences\, listed in Table~\ref{tab:phi}, $\lim_{x \to 0^+} \phi'(x) = - \infty$ holds for the Kullback--Leibler divergence, the squared Hellinger divergence, the reverse relative entropy, the Jensen--Shannon divergence, Neyman's $\chi^2$ divergence and $\alpha$-divergences with $\alpha < 1$.
  \qed
\end{remark}

The previous proposition leads to the following corollary, of importance for a large class of applications. It is proven in Section~\ref{proofs:convex}.

\begin{cor}
  \label{cor:diff:linear:ineq}
  Assume that Conditions~\ref{cond:phi:strong:convex},~\ref{cond:interior} and~\ref{cond:S:affine} hold, $\lim_{x \to 0^+} \phi'(x) = - \infty$, $\Theta$ is defined by linear inequalities and $\Mc = \Sc(\bar \Theta) \subset [0,1]^m$ is a convex subset of probability vectors. Then, for any probability vector $q \in (0,1)^m$, the function $\vartheta^*$ in~\eqref{eq:theta:map:2} is continuously differentiable at $q$ with Jacobian matrix at $q$ given by~\eqref{eq:J:vartheta:map:affine} with $t_0 = q$, and the function $\Sc^*$ in~\eqref{eq:S:*:map} is continuously differentiable at $q$ with Jacobian matrix at $q$ given by $J[\Sc^*](q) = A  J[\vartheta^*](q)$.
\end{cor}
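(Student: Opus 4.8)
The plan is to re-run the argument behind Corollary~\ref{cor:diff:convex}, since the present hypotheses differ from those of that corollary only in that Condition~\ref{cond:support} is no longer assumed outright but must instead be extracted from Proposition~\ref{prop:cond:support}. Concretely, I would verify that Conditions~\ref{cond:unicity:interior:t0} and~\ref{cond:invertibility} hold at $t_0 = q$, and then read off the differentiability of $\vartheta^*$ and $\Sc^*$ together with their Jacobians from Theorem~\ref{thm:diff:vartheta} and Corollary~\ref{cor:diff:S:*}, specialising the resulting formulas to the affine case.

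First I would secure unicity and continuity. Since $\Mc = \Sc(\bar\Theta)$ is a convex subset of $[0,1]^m$, Proposition~\ref{prop:cond:unicity:t0} shows that every $t \in (0,1)^m$ has a unique $\phi$-projection on $\Mc$, so that Condition~\ref{cond:unicity:t0} holds and $\Sc^*$ is well defined on all of $(0,1)^m$; Corollary~\ref{cor:cont:S:*:t0} then gives that $\Sc^*$ is continuous at $q$.

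The delicate point is upgrading this to Condition~\ref{cond:unicity:interior:t0}. Here Proposition~\ref{prop:cond:support} only furnishes the ``same support'' property at probability vectors, whereas Condition~\ref{cond:unicity:interior:t0} concerns all $t$ in a full-dimensional open neighborhood $\Nc(q) \subset (0,1)^m$, which necessarily contains vectors that are not probability vectors. I would bridge this gap by continuity: Proposition~\ref{prop:cond:support} (applicable because Condition~\ref{cond:S:affine} holds, $\lim_{x\to 0^+}\phi'(x) = -\infty$, $\Theta$ is defined by linear inequalities and $\Mc$ is a convex set of probability vectors) gives $\Sc^*(q) = p^* \in (0,1)^m$; since $(0,1)^m$ is open and $\Sc^*$ is continuous at $q$, there is an open neighborhood $\Nc(q) \subset (0,1)^m$ of $q$ with $\Sc^*(t) \in (0,1)^m$ for every $t \in \Nc(q)$. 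By Condition~\ref{cond:interior}, $\Sc^*(t) \in \Mc \cap (0,1)^m = \Sc(\mathring\Theta)$, and injectivity of $\Sc$ (Condition~\ref{cond:M}) makes the corresponding $\theta^* \in \mathring\Theta$ unique; this is exactly Condition~\ref{cond:unicity:interior:t0}. This continuity step, which replaces the direct appeal to Lemma~\ref{lem:cond:unicity:interior} made in Corollary~\ref{cor:diff:convex}, is the main obstacle, as it is the only place where the restriction of Proposition~\ref{prop:cond:support} to probability vectors must be reconciled with the full-dimensional neighborhood required downstream.

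It then remains to verify Condition~\ref{cond:invertibility} and to identify the Jacobians. Conditions~\ref{cond:phi:strong:convex} and~\ref{cond:S:affine} yield Condition~\ref{cond:S:strong} by Lemma~\ref{lem:cond:S}, and Proposition~\ref{prop:S:strong:invert} then makes $J_2[D_\phi(\Sc(\cdot)\mid q)](\vartheta^*(q))$ positive definite, which is Condition~\ref{cond:invertibility} at $t_0 = q$ (legitimate since $\vartheta^*(q) \in \mathring\Theta$). With Conditions~\ref{cond:unicity:interior:t0} and~\ref{cond:invertibility} in hand, Theorem~\ref{thm:diff:vartheta} and Corollary~\ref{cor:diff:S:*} give continuous differentiability of $\vartheta^*$ and $\Sc^*$ at $q$. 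Finally, because $\Sc$ is affine, $J[\Sc] \equiv A$ and $J_2[\Sc] \equiv 0$, so the second term of~\eqref{eq:J2} vanishes and~\eqref{eq:J:vartheta:map} collapses to~\eqref{eq:J:vartheta:map:affine}, while $J[\Sc^*](q) = J[\Sc](\vartheta^*(q))\,J[\vartheta^*](q) = A\,J[\vartheta^*](q)$, as claimed.
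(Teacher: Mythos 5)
Your proof is correct, and it diverges from the paper's own proof precisely at the one step that matters. The paper's proof is a one-liner: it reduces to Corollary~\ref{cor:diff:convex} and disposes of the only remaining hypothesis, Condition~\ref{cond:support}, by citing Proposition~\ref{prop:cond:support} ``since attention is restricted to probability vectors.'' As you observe, this is not a literal implication: Condition~\ref{cond:support} quantifies over all $t \in (0,1)^m$, and the open neighborhood $\Nc(q)$ demanded by Condition~\ref{cond:unicity:interior:t0} (via Lemma~\ref{lem:cond:unicity:interior}) necessarily contains non-probability vectors, whereas Proposition~\ref{prop:cond:support} controls the support of the projection only at probability vectors. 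Your continuity bridge --- uniqueness on all of $(0,1)^m$ from Proposition~\ref{prop:cond:unicity:t0}, continuity of $\Sc^*$ at $q$ from Corollary~\ref{cor:cont:S:*:t0}, membership $\Sc^*(q) \in (0,1)^m$ from Proposition~\ref{prop:cond:support}, and hence $\Sc^*(t) \in (0,1)^m$ on a neighborhood of $q$ by openness of $(0,1)^m$ --- establishes Condition~\ref{cond:unicity:interior:t0} directly without ever needing Condition~\ref{cond:support} in full. What each approach buys: the paper's route is shorter and keeps the logical structure of Figure~\ref{fig:diagram} intact, but as written it asserts more than Proposition~\ref{prop:cond:support} delivers; your route uses only the single-point conclusion at $q$ plus continuity, so it is the more careful (and self-contained) argument. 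The remaining steps --- Condition~\ref{cond:invertibility} via Lemma~\ref{lem:cond:S} and Proposition~\ref{prop:S:strong:invert}, and the collapse of~\eqref{eq:J2} and~\eqref{eq:J:vartheta:map} to~\eqref{eq:J:vartheta:map:affine} because $J[\Sc] \equiv A$ and $J_2[\Sc] \equiv 0$ --- coincide with those underlying Corollary~\ref{cor:diff:convex}.
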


\subsection{The case of $\Mc$ defined by linear equalities}
\label{sec:lin:eq}

Many applications involve $\phi$-projections on sets defined by linear equalities (see Sections~\ref{sec:moments} and~\ref{sec:Frechet}). The proposition below, which states that in such cases Conditions~\ref{cond:M},~\ref{cond:interior} and~\ref{cond:S:affine} are automatically verified, is therefore of great interest. It is proven in Section~\ref{proofs:convex}.

\begin{prop}
  \label{prop:linear:eq}
  Assume that the set $\Mc$ is defined by linear equalities, that is, that there exist $d \in \N^+$, $\alpha = (\alpha_1, \dots, \alpha_d) \in \R^d$ and $d$ functions $f_1, f_2, \dots, f_d$ on $\{1,\dots,m\}$ such that
  $$
  \Mc = \left\{ s \in [0,1]^m : \sum_{i = 1}^m s_i f_j(i) = \alpha_j, j \in \{1, \dots, d\} \right\}.
  $$
  Moreover, suppose that $\Mc \cap (0,1)^m$ is nonempty and $\Mc$ is not reduced to a singleton. Then Conditions~\ref{cond:M},~\ref{cond:interior} and~\ref{cond:S:affine} hold.    
\end{prop}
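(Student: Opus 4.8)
The plan is to construct explicitly the affine parametrization $\Sc$ demanded by Conditions~\ref{cond:M},~\ref{cond:interior} and~\ref{cond:S:affine} directly from the linear description of $\Mc$. First I would encode the equalities through the $d \times m$ matrix $F$ with entries $F_{j,i} = f_j(i)$ and the vector $\alpha$, so that $\Mc = V \cap [0,1]^m$, where $V = \{s \in \R^m : Fs = \alpha\}$ is an affine subspace. Since $\Mc \cap (0,1)^m \neq \emptyset$, the subspace $V$ is nonempty, its direction space is $W = \ker F$, and I set $k = \dim W = m - \operatorname{rank} F$. Then $k \leq m$ is immediate, while $k \geq 1$ follows from $\Mc$ not being a singleton, since $\operatorname{aff}(\Mc) \subseteq V$ must then have positive dimension.

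Next I would fix a point $s_0 \in \Mc \cap (0,1)^m$ and a matrix $A \in \R^{m \times k}$ whose columns form a basis of $W$, and set $\Sc(\theta) = A\theta + s_0$. Because $A$ has full column rank $k$, the map $\Sc$ is injective and infinitely differentiable, and its image is $\Sc(\R^k) = s_0 + W = V$. I would then take $\Theta = \{\theta \in \R^k : \Sc(\theta) \in (0,1)^m\}$, an intersection of open slabs, hence open and convex, and nonempty since $\Sc(0) = s_0 \in (0,1)^m$; its boundedness follows from the full column rank of $A$, as a bound on $\|A\theta\|$ forces a bound on $\|\theta\|$. Consequently $\bar\Theta$ is compact and convex, which supplies the convexity in Condition~\ref{cond:S:affine}. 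By construction $\mathring\Theta = \Theta$ and $\Sc(\mathring\Theta) = V \cap (0,1)^m = \Mc \cap (0,1)^m$, which is exactly Condition~\ref{cond:interior} and also yields $\Sc(\mathring\Theta) \subset (0,1)^m$.

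It then remains to check $\Sc(\bar\Theta) = \Mc$, completing Condition~\ref{cond:M}. The inclusion $\Sc(\bar\Theta) \subseteq \Mc$ is routine: continuity of $\Sc$ and closedness of $\Mc$ show that limits of points of $\Sc(\Theta) \subset \Mc$ remain in $\Mc$. For the reverse inclusion, the crucial step is the density identity $\Mc = \overline{\Mc \cap (0,1)^m}$, which I would prove by a convexity argument: for any $s \in \Mc$ the segment $s_\lambda = (1-\lambda)s_0 + \lambda s$ lies in $V$, and since $s_0 \in (0,1)^m$ and $s \in [0,1]^m$, each coordinate of $s_\lambda$ stays strictly inside $(0,1)$ for $\lambda \in [0,1)$; letting $\lambda \to 1$ exhibits $s$ as a limit of points of $\Mc \cap (0,1)^m$. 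Since $\Sc(\bar\Theta)$ is compact, hence closed, and contains $\Sc(\Theta) = \Mc \cap (0,1)^m$, it then contains $\overline{\Mc \cap (0,1)^m} = \Mc$, and the two inclusions give $\Sc(\bar\Theta) = \Mc$.

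I expect the main obstacle to be this last verification, and specifically the identity $\Mc = \overline{\Mc \cap (0,1)^m}$: one must ensure that every boundary point of $\Mc$, where some coordinate touches $0$ or $1$, is recovered as the image of a point of $\bar\Theta$ while no spurious points are introduced. The segment argument above, which uses both the affineness of $V$ and the hypothesis $\Mc \cap (0,1)^m \neq \emptyset$, is what makes this work; the same device underlies the identity $\operatorname{aff}(\Mc) = V$ that conceptually pins down the value of $k$.
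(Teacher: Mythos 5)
Your construction is correct and is essentially the one the paper uses: a basis of $\ker(F)$ assembled into the columns of $A$, an anchor point $s_0 \in \Mc \cap (0,1)^m$, and the affine map $\Sc(\theta) = A\theta + s_0$, with injectivity from the full column rank of $A$ and boundedness of $\Theta$ from the same fact. The only divergence is that the paper defines $\Theta$ directly by the closed inequalities $0_{\R^m} \leq s_0 + A\theta \leq 1_{\R^m}$, so that every $s \in \Mc$ algebraically produces a unique $\theta \in \bar\Theta$ and the surjectivity $\Sc(\bar\Theta) = \Mc$ is immediate, whereas your choice of the open preimage $\Sc^{-1}((0,1)^m)$ obliges you to establish the density identity $\Mc = \overline{\Mc \cap (0,1)^m}$ via the segment through $s_0$. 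That extra step is carried out correctly, but the paper's closed parametrization simply sidesteps what you identify as the main obstacle.
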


Note that the previous result does not directly provide the expression of the function $\Sc$ in Condition~\ref{cond:M} which is necessary to compute the Jacobians in Corollaries~\ref{cor:diff:convex} and~\ref{cor:diff:linear:ineq}. It merely guarantees that, whenever $\Mc$ is defined by linear equalities, the framework for studying the differentiability of $\phi$-projections set up in Section~\ref{sec:framework} applies with $\Sc$ affine and Condition~\ref{cond:interior} additionally satisfied.

\section{Applications to minimum \phidivergence\ estimators}
\label{sec:app}

This section presents two applications of the results of Section~\ref{sec:diff} to $\phi$-projection estimators. To indicate that the element of $(0,1)^m$ to be $\phi$-projected is a probability vector, it will be denoted by $q_0$. Given an estimator $q_n$ of $q_0$, we call $\Sc^*(q_n)$ the $\phi$-projection and/or minimum \phidivergence\ estimator of $q_0$ (that is, the estimator of $\Sc^*(q_0)$). \change{The first subsection provides an immediate non-asymptotic application of our differentiability results to robust statistics. The second subsection shows how the results of Section~\ref{sec:diff} can be employed to derive the limiting distributions of properly scaled $\phi$-projection estimators in three different situations.}

\change{
\subsection{Influence functions of $\phi$-projection estimators}

A particularly useful concept in the context of robust statistics is the \emph{influence function} \citep[see, e.g.,][]{Ham74} which quantifies both the first-order robustness and the asymptotic efficiency of an estimator. Minimum \phidivergence\ estimators are also popular for their strong robustness properties \citep[see, e.g.,][]{BasShiPar11}. Although the use of influence functions for these estimators has been questioned \citep{Lin94}, their computation may still be of interest. For example, in the continuous case, as a starting point for the investigation of robustness under pre-specified restrictions of the parameter space (such as those imposed by a composite null hypothesis), \citet[Theorem 2.1]{Gho15} derived a general expression for the influence functions of density-based minimum \phidivergence\ estimators. In this subsection, we demonstrate how an analog result can be obtained for minimum \phidivergence\ estimators in the finite discrete case as a corollary of Theorem~\ref{thm:diff:vartheta}. 

In what follows, we adapt the classical definitions employed in robust statistics \citep[see, e.g.,][]{Lin94, Gho15} to our setting. In accordance with, for example, Definition 2.2 in \cite{Gho15}, the \emph{statistical functional} associated with the minimum \phidivergence\ estimator of a probability vector $q_0 \in (0,1)^m$ is $\vartheta^*$ defined in \eqref{eq:theta:map}. For any $j \in \{1,\dots,m\}$, let $q_0^{[j]}: (-1, 1) \to (0,1)^m$ be the function defined by 
\begin{equation}
  \label{eq:tj}
q_0^{[j]}(\epsilon) = (1 - \epsilon) q_0 + \epsilon e_j, \qquad \epsilon \in (-1, 1),
\end{equation}
where $e_j$ is the vector of $[0,1]^m$ with a 1 at position $j$ and 0 elsewhere. This allows us to define, for any $\epsilon \in [0, 1)$ and $j \in \{1,\dots,m\}$, the $\epsilon$-contaminated at $j$ version of $q_0$ as $q_0^{[j]}(\epsilon)$. The influence function of the minimum \phidivergence\ estimator of $q_0$ is then
\begin{equation}
  \label{eq:infl}
	\textup{IF}(j, \vartheta^*, q_0) = \frac{\partial \vartheta^*(q^{[j]}_{0}(\epsilon))}{\partial \epsilon} \Big |_{\epsilon = 0}, \qquad j \in \{1,\dots,m\}.
\end{equation}
%
%

The following result, proven in Section~\ref{proofs:IF}, is a straightforward consequence of Theorem~\ref{thm:diff:vartheta} and the chain rule.

%
%

\begin{cor}
  \label{cor:IF}
Assume that Conditions~\ref{cond:unicity:interior:t0} and~\ref{cond:invertibility} hold with $t_0 = q_0$. Then,  
$$
\textup{IF}(j, \vartheta^*, q_0) = J[\vartheta^*](q_0) (e_j - q_0), \qquad j \in \{1,\dots,m\},
$$
where $\vartheta^*$ is defined in~\eqref{eq:theta:map:2} and $J[\vartheta^*](q_0)$ is given in \eqref{eq:J:vartheta:map} with $t_0 = q_0$. 
\end{cor}

}


\subsection{Asymptotics of minimum \phidivergence\ estimators}
\label{sec:app:asym}

In the following \change{three examples}, after defining the function $\Sc$ in Condition~\ref{cond:M} and verifying the conditions of Corollary~\ref{cor:diff:S:*}, Corollary~\ref{cor:diff:convex} or Corollary~\ref{cor:diff:linear:ineq}, we shall compute the Jacobian matrix of the function $\Sc^*$ in~\eqref{eq:S:*:map} at $q_0$, that is, in the notation of Section~\ref{sec:diff}, $J[\Sc^*](q_0)$. \change{Recall that $q_n$ is an estimator of $q_0$.} If $\sqrt{n} (q_n - q_0)$ converges weakly as $n$ tends to infinity, it follows immediately with the delta method \citep[see, e.g.,][Theorem~3.1]{Van98} that
\begin{equation}
  \label{eq:delta}
  \sqrt{n} (\Sc^*(q_n) - \Sc^*(q_0)) = J[\Sc^*](q_0) \sqrt{n} (q_n - q_0) + o_\Pr(1).
\end{equation}

For illustration purposes, we shall specifically consider a random variable $X$ taking its values in a set $\Xc = \{x_1,\dots,x_m\}$ of interest such that $\Pr(\{X = x_i\}) = q_{0,i}$, $i \in \{1,\dots,m\}$, and we shall assume that we have at hand $n$ independent copies $X_1,\dots,X_n$ of $X$. A natural estimator of $q_0$ is then the probability vector $q_n$ whose components are
\begin{equation*}
 q_{n,i} = \frac{1}{n} \sum_{j=1}^n \1(X_j = x_i), \qquad i \in \{1,\dots,m\},
\end{equation*}
where $\1$ denotes the indicator function. The multivariate central limit theorem establishes that, as $n$ tends to infinity, $\sqrt{n} (q_n - q_0)$ converges weakly to an $m$-dimensional centered normal distribution with covariance matrix $\Sigma_{q_0} = \diag(q_0) - q_0 q_0^\top$. As a result, we obtain from~\eqref{eq:delta} that, as $n$ tends to infinity, $\sqrt{n} (\Sc^*(q_n) - \Sc^*(q_0))$ converges weakly to an $m$-dimensional centered normal distribution with covariance matrix $\Sigma = J[\Sc^*](q_0) \Sigma_{q_0} J[\Sc^*](q_0)^\top$. To empirically verify the correctness of the asymptotic covariance matrix $\Sigma$, in all three applications, we shall finally compare it with $\Sigma_{n,N}$, the sample covariance matrix computed from $N=5000$ independent replicates of $\sqrt{n} \Sc^*(q_n)$ for $n = 5000$.
The first \change{example} below is prototypical of the use of minimum \phidivergence\ estimators for parametric inference: it is about $\phi$-projecting a probability vector $q_0$ on the set $\Mc$ of probability vectors generated from binomial distributions whose first parameter is fixed to $m-1$. As the set $\Mc$ is not convex in this case, we will have to rely on the general results of Section~\ref{sec:param}, and on Corollary~\ref{cor:diff:S:*} in particular. In the second and third scenarios, the set $\Mc$ is defined by linear equalities (and is thus convex) which allows using the results of Sections~\ref{sec:convex} and~\ref{sec:lin:eq}. Specifically, in the second (resp.\ third) \change{example}, $\Mc$ is the set of probability vectors generated from distributions with certain moments fixed (resp.\ the Fréchet class of all bivariate probability arrays with given univariate margins).

\change{\subsubsection{$\phi$-projection on the set of binomial probability vectors}}
\label{sec:binom}

We first consider a parametric inference scenario in which one wishes to $\phi$-project a probability vector $q_0 \in (0,1)^m$ of interest on a given parametric model. As an example, for $m > 1$, we consider the set of probability vectors generated from binomial distributions whose first parameter is fixed to $m-1$. Specifically, for any $\ell \in \N_0$ and $i \in \{0,\dots,\ell\}$, let
$$
p_{\ell,i}(\theta) = \binom{\ell}{i} \theta^{i} (1 - \theta)^{\ell-i}, \qquad \theta \in [0,1],
$$
and let $\Mc = \{(p_{m-1,0}(\theta),\dots,p_{m-1,m-1}(\theta)) : \theta \in [0,1] \}$. The set $\Theta$ in Condition~\ref{cond:M} is thus $[0,1]$ and the $i$th component function $\Sc_i$, $i \in \{1,\dots,m\}$, of the function $\Sc$ is $p_{m-1,i-1}$. Furthermore, for any $\ell \in \N_0$, $i \in \{0,\dots,\ell\}$ and $\theta \in (0,1)$, 
\begin{align*}
  p_{\ell,i}'(\theta) =& \binom{\ell}{i} \left[\1(i > 0) i \theta^{i-1} (1 - \theta)^{\ell-i} - \1(i < \ell) \theta^{i} (\ell-i) (1 - \theta)^{\ell-i-1}\right] \\
  =& \ell \left[ \1(i \geq 1) p_{\ell-1,i-1}(\theta) - \1(i \leq \ell-1) p_{\ell-1,i}(\theta) \right],
\end{align*}
and 
\begin{align*}
  p_{\ell,i}''(\theta) =  \ell \left[ \1(i \geq 1) p_{\ell-1,i-1}'(\theta) - \1(i \leq \ell-1) p_{\ell-1,i}'(\theta) \right].
\end{align*}
From the above, we obtain that, for any $\theta \in (0,1)$,
$$
J[\Sc](\theta) = (p_{m-1,0}'(\theta),\dots,p_{m-1,m-1}'(\theta)) 
$$
and
$$
J_2[\Sc](\theta) = (p_{m-1,0}''(\theta),\dots,p_{m-1,m-1}''(\theta)).
$$

\begin{figure}[t!]
\begin{subfigure}{.33\textwidth}
\centering
\includegraphics*[width=1\linewidth]{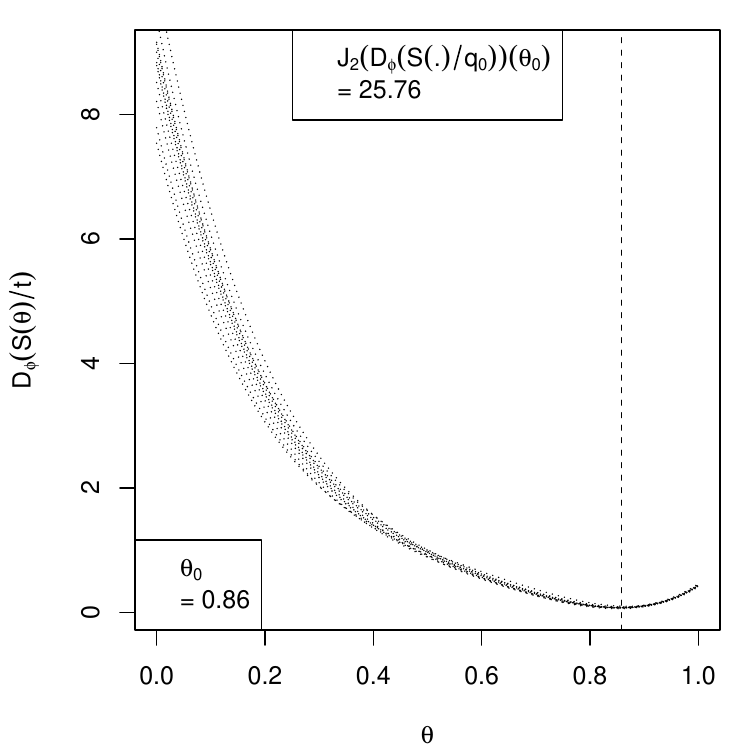}
\end{subfigure}%
\begin{subfigure}{.33\textwidth}
\centering
\includegraphics*[width=1\linewidth]{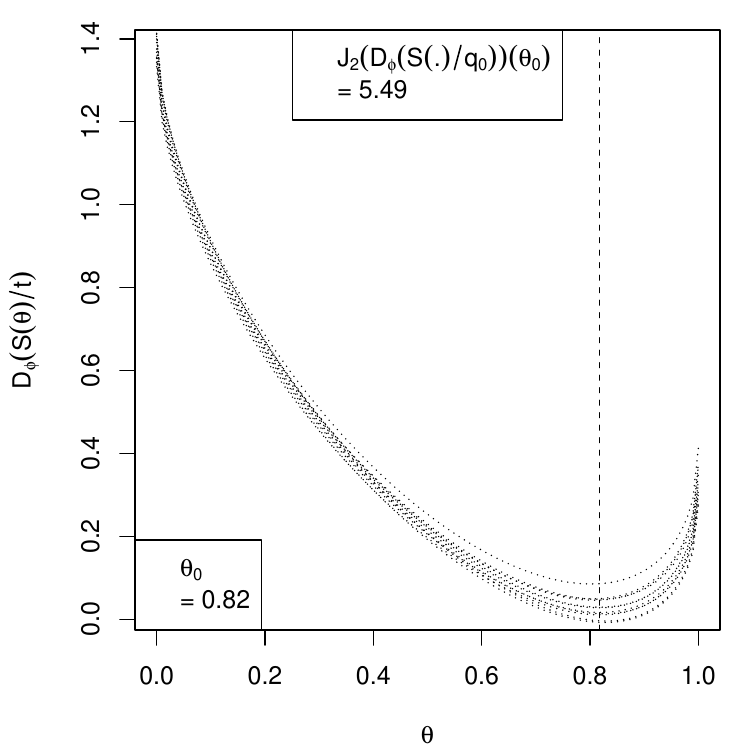}
\end{subfigure}
\begin{subfigure}{.33\textwidth}
\centering
\includegraphics*[width=1\linewidth]{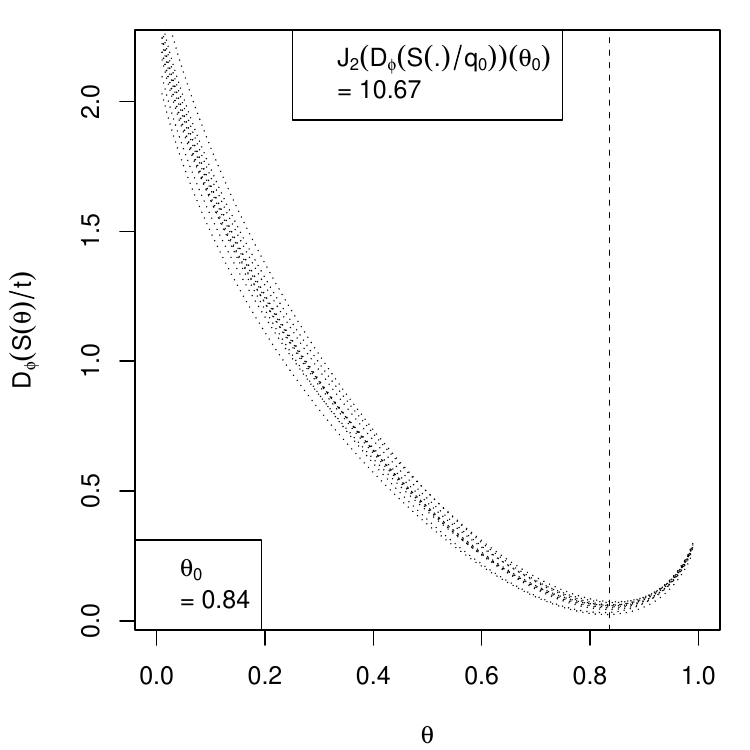}
\end{subfigure}
\caption{For Pearson's $\chi^2$ divergence (left), the squared Hellinger divergence (middle) and the Kullback--Leibler divergence (right), graphs of the functions $D_\phi(\Sc(\cdot) \mid t)$ for 10 vectors $t \in (0,1)^3$ of the form $t = q_0 + (z_1,\dots,z_m)$, where $(z_1,\dots,z_m)$ is drawn from the $m$-dimensional centered normal distribution with covariance matrix $0.01^2 I_m$. In each plot, the vertical dashed line marks the value $\theta_0$ at which the function $D_\phi(\Sc(\cdot) \mid q_0)$ reaches its minimum and the top insert contains the (strictly positive) value of $J_2[D_\phi(\Sc(\cdot) \mid q_0)](\theta_0)$.}
\label{fig:check:cond}
\end{figure}

As an illustration, we take $m=3$ and $q_0 = (0.1, 0.2, 0.7) \not \in \Mc = \Sc(\bar \Theta)$. As \phidivergence, we consider Pearson's $\chi^2$ divergence, the squared Hellinger divergence and the Kullback--Leibler divergence (see Table~\ref{tab:phi}). To be able to apply Corollary~\ref{cor:diff:S:*} and compute $J[\Sc^*](q_0)$ given in~\eqref{eq:J:S:*:map} (to eventually compute $\Sigma = J[\Sc^*](q_0) \Sigma_{q_0} J[\Sc^*](q_0)^\top$), we need to verify Conditions~\ref{cond:unicity:interior:t0} and~\ref{cond:invertibility}. We proceed as suggested in Remark~\ref{rem:conds}. To empirically verify Condition~\ref{cond:unicity:interior:t0}, we plot the function $D_\phi(\Sc(\cdot) \mid t)$ for 10 vectors $t \in (0,1)^3$ in a ``neighborhood'' of~$q_0$. Specifically, we consider vectors $t$ of the form $t = q_0 + (z_1,\dots,z_m)$, where $(z_1,\dots,z_m)$ is drawn from the $m$-dimensional centered normal distribution with covariance matrix $0.01^2 I_m$. Figure~\ref{fig:check:cond} reveals that Condition~\ref{cond:unicity:interior:t0} seems to hold for all three divergences. In all three cases, the value $\theta_0$ at which $D_\phi(\Sc(\cdot) \mid q_0)$ reaches its minimum is found numerically using the \texttt{optim()} function of the \textsf{R} statistical environment \citep{Rsystem} and its rounded value is reported in the lower left corner of each plot. The rounded value of $J_2[D_\phi(\Sc(\cdot) \mid q_0)](\theta_0)$ is also provided in each plot of Figure~\ref{fig:check:cond} and confirms that Condition~\ref{cond:invertibility} is likely to hold as well. For all three \phidivergences, the asymptotic covariance matrix $\Sigma$ of $\sqrt{n} (\Sc^*(q_n) - \Sc^*(q_0))$ and its approximation $\Sigma_{n,N}$ show very good agreement. We print their rounded versions below for Pearson's $\chi^2$ divergence:

$$
\Sigma = \begin{bmatrix}
  0.010 & 0.049 & -0.059 \\ 
  0.049 & 0.246 & -0.295 \\ 
  -0.059 & -0.295 & 0.353 \\ 
   \end{bmatrix}
, \qquad
\Sigma_{n,N} = \begin{bmatrix}
  0.010 & 0.049 & -0.059 \\ 
  0.049 & 0.248 & -0.297 \\ 
  -0.059 & -0.297 & 0.356 \\ 
   \end{bmatrix}
.
$$

\change{\subsubsection{$\phi$-projection on the set of probability vectors generated from distributions with certain moments fixed}\label{sec:moments}}

Let $X$ be a random variable whose support is $\Xc = \{x_1, x_2, \dots, x_m\}$ with $x_1 < \dots < x_m$ and let $q_0 \in (0,1)^m$ be defined by $q_{0,i} = \Pr(\{X = x_i\})$, $i \in \{1,\dots,m\}$. \change{Without loss of generality, assume} that the aim is to $\phi$-project $q_0$ on the set $\Mc$ of probability vectors obtained from distributions on $\Xc$ whose $r < m-1$ first raw (non-centered) moments are fixed to some known values $\mu_1,\dots,\mu_r$, that is,
\begin{equation}
  \label{eq:M:moments}
  \Mc = \left\{ p \in [0,1]^m : \sum_{i = 1}^m p_i = 1, \sum_{i = 1}^m x_i^j p_i = \mu_j, j \in \{ 1,\dots, r \} \right\}.
\end{equation}
For the previous problem to be well-defined, it is further necessary to assume that $\Mc$ is nonempty. Note that the above setting is strongly related to the so-called \emph{moment problem}. In the current finite discrete setting, \cite{Tag00,Tag01} proposed to address it using the maximum entropy principle \citep{Jay57}. It appears that his approach is a particular case of the considered $\phi$-projection approach when $D_\phi$ in~\eqref{eq:phi:div} is the Kullback--Leibler divergence and $q_0$ is the uniform distribution on $\Xc$.

The set $\Mc$ in~\eqref{eq:M:moments} is clearly convex. To illustrate the usefulness of the differentiability results stated in Section~\ref{sec:convex}, we shall further assume that $\Mc \cap (0,1)^m$ is nonempty. Since $\Mc$ is defined by linear equalities, Proposition~\ref{prop:linear:eq} guarantees that Conditions~\ref{cond:M} and~\ref{cond:interior} hold for some set $\Theta$ and some affine function $\Sc$ to be determined. Moreover, let $\mu = (1, \mu_1, \dots, \mu_r) \in \R^{r+1}$ and, for any $\ell \in \{1,\dots,m\}$, let $U_\ell$ be the $\ell \times m$ matrix whose $i$th row is $(x_1^{i-1}, \dots, x_m^{i-1})$, $i \in \{1,\dots,\ell\}$. With the above notation, $\Mc = \{p \in [0,1]^m : U_{r+1} p = \mu \}$. Next, take an element $p$ of $\Mc$ and denote by $\theta_1,\dots,\theta_{m-r-1}$ the corresponding raw (non-centered) moments of order $r+1,\dots,m-1$, respectively. Then, 
$$
U_m p = (\mu, \theta),
$$
where $\theta = (\theta_1,\dots,\theta_{m-r-1}) \in \R^{m-r-1}$ and
$$
(\mu, \theta) = (1, \mu_1, \dots, \mu_r, \theta_1,\dots,\theta_{m-r-1}) \in \R^m.
$$
Since $U_m$ is a Vandermonde matrix with $x_i \neq x_j$ for $i, j \in \{ 1, \dots, m \}$, $i \neq j$, it is invertible \cite[see, e.g.,][p 203]{GolLoa13} and we have
$$
p = U_m^{-1} (\mu, \theta)
$$
confirming that any p.m.f.\ on $\Xc$ is perfectly defined by its $m-1$ raw moments. As a consequence, any $p \in \Mc$ can be parametrized by a vector $\theta \in \R^{m-r-1}$ (of raw moments) in
$$
\Theta = \left \{ \theta \in \R^{m - r -1} : 0_{\R^m} \leq U_m^{-1} (\mu, \theta) \leq 1_{\R^m}  \right\}
$$
and the function $\Sc$ from $\Theta = \bar \Theta$ to $[0,1]^m$ such that $\Mc = \Sc(\bar \Theta)$ is given by
$$
\Sc(\theta) = U_m^{-1}(\mu, \theta), \qquad \theta \in \bar \Theta.
$$
Some thought reveals that $\Sc(\mathring \Theta) = \Mc \cap (0,1)^m$, that is, Condition~\ref{cond:interior} holds as expected.

Let $V_m$ (resp. $W_m$) be the $m \times (r+1)$ (resp.\ $m \times (m - r - 1)$) matrix obtained from $U_m^{-1}$ by keeping its first $r+1$ (resp.\ last $m - r - 1$) columns. Then, note that $\Theta$ and $\Sc$ can be equivalently expressed as
\begin{equation}
  \label{eq:Theta:moments}
  \Theta = \left \{ \theta \in \R^{m - r -1} : - V_m \mu \leq W_m \theta \leq 1_{\R^m} - V_m \mu  \right\}
\end{equation}
and
\begin{equation}
  \label{eq:affine:moments}
\Sc(\theta) = W_m \theta + V_m \mu , \qquad \theta \in \bar \Theta,
\end{equation}
respectively. From the previous expressions, we have that $\bar \Theta$ is convex, $\Sc$ is affine (that is, Condition~\ref{cond:S:affine} holds) and the Jacobian matrix of $\Sc$ at any $\theta \in \mathring \Theta$ is constant and equal to $W_m$.

\begin{figure}[t!]
  \centering
  \includegraphics*[width=0.8\linewidth]{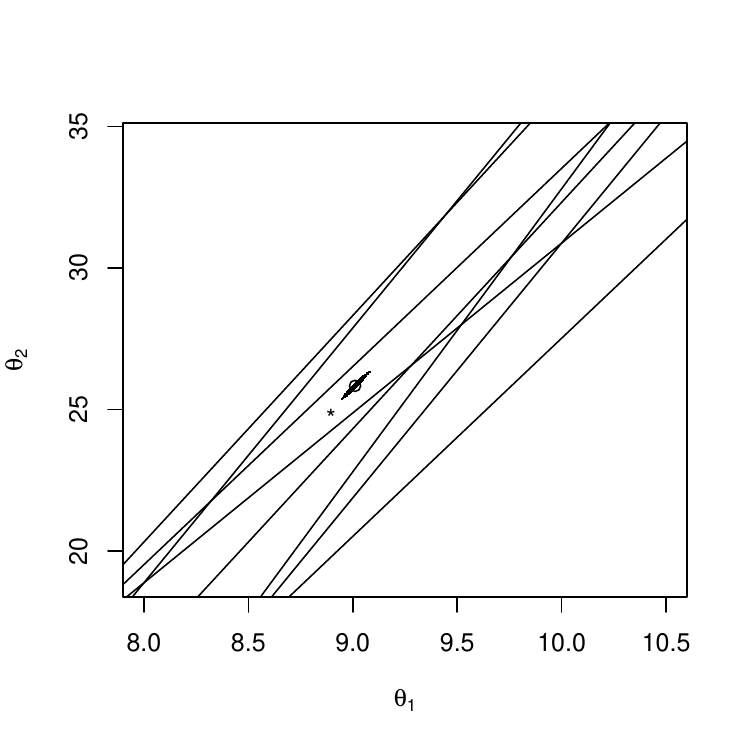}
  \caption{Lines representing the linear inequalities appearing in the definition of $\Theta$ in~\eqref{eq:Theta:moments}. The symbol `$\star$' represents the point $(8.896, 24.8704) \in \mathring \Theta$ corresponding to the probability vector of the binomial distribution with parameters $m-1$ and 0.4. For the squared Hellinger divergence, the vector $\theta_0$ at which $D_\phi(\Sc(\cdot) \mid q_0)$ reaches its minimum is represented by the symbol `o'. The small oblique cloud of points around `o' consists of realizations of the value $\theta_n$ at which the function $D_\phi(\Sc(\cdot) \mid q_n)$ reaches its minimum.}
  \label{fig:feasible}
\end{figure}

As an illustration, we take $m=5$, $\Xc = \{0,\dots,m-1\}$, $r=2$ and $\mu = (1, 1.6, 3.52)$. The linear inequalities appearing in the definition of $\Theta$ in~\eqref{eq:Theta:moments} are then represented in Figure~\ref{fig:feasible}. Note that $\Mc = \Sc(\bar \Theta)$ is nonempty as $\Sc(\mathring \Theta)$ contains the probability vector of the binomial distribution with parameters $m-1$ and 0.4 (whose raw moments of order 1, 2, 3 and 4 are 1.6, 3.52,  8.896 and 24.8704, respectively). This also implies that $(8.896, 24.8704) \in \mathring \Theta$. The latter point is represented by the symbol `$\star$'  in Figure~\ref{fig:feasible}. For $q_0$, we take $(0.35, 0.3, 0.15, 0.1, 0.1)$. Note that $q_0$ does not belong to $\Mc$ as the corresponding raw moments of order 1, 2, 3 and 4 are 1.3, 3.4, 10.6 and 36.4, respectively. As in the previous application, as \phidivergence, we consider Pearson's $\chi^2$ divergence, the squared Hellinger divergence and the Kullback--Leibler divergence. In all three cases, we use the \texttt{constrOptim()} function of the \textsf{R} statistical environment to find the vector $\theta_0$ at which $D_\phi(\Sc(\cdot) \mid q_0)$ reaches its minimum. For Pearson's $\chi^2$ divergence (resp.\ the squared Hellinger divergence, the Kullback--Leibler divergence), after rounding to two decimal places, we obtain $(9.18, 26.84)$ (resp.\ $(9.01, 25.84)$, $(9.07, 26.19)$). For the squared Hellinger divergence, $\theta_0$ is represented by the symbol `o' in Figure~\ref{fig:feasible}. Additional calculations show that, in all three cases, $\Sc(\theta_0) \in (0,1)^m$, which implies that Condition~\ref{cond:support} holds. Note that, from Table~\ref{tab:phi}, Condition~\ref{cond:phi:strong:convex} holds for all three divergences under consideration and that, from~\eqref{eq:Theta:moments} and~\eqref{eq:affine:moments}, Condition~\ref{cond:S:affine} holds as well. Proposition~\ref{prop:cond:support} and Remark~\ref{rem:support} then imply that Condition~\ref{cond:support} automatically holds for the Kullback--Leibler and the squared Hellinger divergences. For Pearson's $\chi^2$ divergence, however, the additional calculations mentioned above cannot be dispensed with.

For the $\chi^2$ divergence (resp.\ the Kullback--Leibler divergence and the squared Hell\-inger divergence), we can apply Corollary~\ref{cor:diff:convex} (resp.\ Corollary~\ref{cor:diff:linear:ineq}) and compute $J[\Sc^*](q_0)$ as given therein as well as $\Sigma = J[\Sc^*](q_0) \Sigma_{q_0} J[\Sc^*](q_0)^\top$. Again, for all three \phidivergences, the asymptotic covariance matrix $\Sigma$ of $\sqrt{n} (\Sc^*(q_n) - \Sc^*(q_0))$ and its approximation $\Sigma_{n,N}$ showed very good agreement. We print their rounded versions below for the squared Hellinger divergence:

\begin{align*}
\Sigma &= \begin{bmatrix}
  0.021 & -0.058 & 0.048 & -0.005 & -0.005 \\ 
  -0.058 & 0.168 & -0.154 & 0.037 & 0.007 \\ 
  0.048 & -0.154 & 0.176 & -0.080 & 0.011 \\ 
  -0.005 & 0.037 & -0.080 & 0.070 & -0.022 \\ 
  -0.005 & 0.007 & 0.011 & -0.022 & 0.009 \\ 
   \end{bmatrix}
, \\\Sigma_{n,N} &= \begin{bmatrix}
  0.021 & -0.059 & 0.048 & -0.006 & -0.005 \\ 
  -0.059 & 0.169 & -0.156 & 0.039 & 0.007 \\ 
  0.048 & -0.156 & 0.179 & -0.083 & 0.011 \\ 
  -0.006 & 0.039 & -0.083 & 0.072 & -0.022 \\ 
  -0.005 & 0.007 & 0.011 & -0.022 & 0.009 \\ 
   \end{bmatrix}
.
\end{align*}

\change{\subsubsection{$\phi$-projection on a Fréchet class of bivariate probability arrays}
\label{sec:Frechet}}

Let $r,s \in \N^+$. In this last \change{example}, the initial form of the probability vector to be $\phi$-projected is a $r \times s$ matrix $\un{q}_0$. We further assume that all its elements are strictly positive, that is, $\un{q}_0 \in (0,1)^{r \times s}$. Let $a \in (0,1)^r$ and $b \in (0,1)^s$ be two fixed probability vectors. This last illustration is about $\phi$-projecting the bivariate probability array $\un{q}_0$ on the Fréchet class $\un{\Mc} \subset [0,1]^{r \times s}$ of bivariate probability arrays with univariate margins $a$ and $b$, a problem which arises in many areas of probability and statistics \citep[see, e.g.,][]{Csi75,LitWu91,VajVan05,Gee20}. To be able to exploit the results of Section~\ref{sec:convex}, we will need to consider a vectorized version of this problem. 

Let us first define the set $\un{\Mc}$ in the spirit of Condition~\ref{cond:M}. Let
\begin{align*}
  \un{\Theta} = \Bigg\{ \un{\theta} \in [0,1]^{(r-1) \times (s-1)} : &\sum_{i=1}^{r-1} \un{\theta}_{ij} \leq b_j, j \in \{1,\dots,s-1\},  \\
                                                                    &\sum_{j=1}^{s-1} \un{\theta}_{ij} \leq a_j, i \in \{1,\dots,r-1\} \Bigg\}.
\end{align*}
Note that $\un{\Theta}$ is closed and convex. Some thought reveals that the function $\un{\Sc}$ from $\un{\Theta}$ to $[0,1]^{r \times s}$ defined by
\begin{equation*}
  \un{\Sc}(\un{\theta}) = \begin{bmatrix}
    \theta_{11}  & \dots & \theta_{1,s-1} & a_1 - \sum_{j=1}^{s-1} \theta_{1j} \\
    \vdots &  & \vdots & \vdots \\
    \theta_{r-1,1}  & \dots & \theta_{r-1,s-1} & a_{r-1} - \sum_{j=1}^{s-1} \theta_{r-1,j}  \\
    b_1 - \sum_{i=1}^{r-1} \theta_{i1} & \dots & b_{s-1} - \sum_{i=1}^{r-1} \theta_{i,s-1} & a_r + b_s - 1 + \sum_{i=1}^{r-1} \sum_{j=1}^{s-1} \theta_{ij}
  \end{bmatrix}
\end{equation*}
then generates the Fréchet class $\un{\Mc} \subset [0,1]^{r \times s}$ of bivariate probability arrays with univariate margins $a$ and $b$, that is, $\un{\Mc} = \un{\Sc}(\un{\bar \Theta})$. Furthermore, some thought reveals that $\un{\Sc}(\un{\mathring \Theta}) =  \un{\Mc} \cap (0,1)^{r \times s}$ and $\un{\Mc}$ is not empty as $\un{\Sc}(\un{\mathring \Theta})$ contains the bivariate probability array $a b^\top$. Finally, $\un{\Mc}$ is compact (since $\un{S}$ is continuous and $\un{\bar \Theta}$ is compact) and convex.

For $c,d \in \N^+$, let $\vect_{c,d}$ be the vectorization operator, which, given a matrix in $\R^{c \times d}$, returns its column-major vectorization in $\R^{cd}$ and let $\vect_{c,d}^{-1}$ denote the inverse operator. Next, let $\Theta = \vect_{r-1,s-1}(\un{\Theta})$ and let $\Sc$ be the function from $\Theta$ to $[0,1]^{rs}$ defined by
$$
\Sc(\theta) =  \vect_{r,s} \circ \un{\Sc} \circ \vect_{r-1,s-1}^{-1}(\theta), \qquad \theta \in \Theta.
$$
Some thought then reveals that
$$
\Sc(\theta) = A \theta + \Sc(0_{\R^{(r-1)(s-1)}}), \qquad \theta \in \Theta,
$$
where $A$ is the $rs \times (r-1)(s-1)$ matrix given by
\begin{equation*}
  \begin{bmatrix}
    Q  & 0 & \dots & 0 \\
    0 & Q & \dots & 0 \\
    \vdots & \vdots & \ddots & \vdots \\
  0 & 0 & \dots & Q \\
    -Q & -Q & \dots & -Q \\
  \end{bmatrix}
  \text{ with }
  Q =
  \begin{bmatrix}
    1  & 0 & \dots & 0 \\
    0 & 1 & \dots & 0 \\
    \vdots & \vdots & \ddots & \vdots \\
    0 & 0 & \dots & 1 \\
    -1 & -1 & \dots & -1 \\
  \end{bmatrix}
  \in \R^{r \times (r-1)}.
\end{equation*}
Hence, Condition~\ref{cond:M} holds with $m = rs$ and $k = (r-1)(s-1)$, and the resulting set $\Mc = \Sc(\bar \Theta)$ is simply $\vect_{r,s}(\un{\Mc})$. Since $\un{\Sc}(\un{\mathring \Theta}) =  \un{\Mc} \cap (0,1)^{r \times s}$, Condition~\ref{cond:interior} is also satisfied. Finally, Condition~\ref{cond:S:affine} holds as well since $\Theta = \bar \Theta$ is convex and $\Sc$ is affine. We shall thus be able to apply Corollary~\ref{cor:diff:convex} provided Conditions~\ref{cond:phi:strong:convex} and~\ref{cond:support} hold. Alternatively, since $\Theta$ is defined by linear inequalities, we could also rely on Corollary~\ref{cor:diff:linear:ineq} provided Condition~\ref{cond:phi:strong:convex} and $\lim_{x \to 0^+} \phi'(x) = -\infty$ hold. 

Let us now $\phi$-project $\un{q}_0$ on $\un{\Mc}$ or, equivalently, $q_0 = \vect_{r,s}(\un{q}_0)$ on $\Mc$. As an example, we take $r = s = 3$, $a = (0.2, 0.3, 0.5)$, $b = (0.5, 0.25, 0.25)$ and
$$
\un{q}_0 = \begin{bmatrix}
  0.04 & 0.11 & 0.13 \\
  0.10 & 0.07 & 0.08 \\
  0.14 & 0.12 & 0.21 \\
\end{bmatrix}.
$$
To simplify computations, we shall only use the Kullback--Leibler divergence here, as the $I$-projection of a bivariate probability array on a Fréchet class can be conveniently carried out in practice using the iterative proportional fitting procedure (IPFP), also known as Sinkhorn's algorithm or matrix scaling in the literature \cite[see, e.g.,][]{Puk14,Ide16,BroLeu18}. Specifically, we use the function \texttt{Ipfp()} of the \textsf{R} package \texttt{mipfp} \citep{mipfp}. The $I$-projection of $\un{q}_0$ on $\un{\Mc}$, rounded to two decimal places, is found to be
$$
\un{p}_0 = \begin{bmatrix}
  0.06 & 0.08 & 0.06 \\
  0.18 & 0.07 & 0.06 \\
  0.27 & 0.10 & 0.13 \\
\end{bmatrix},
$$
so that the vector $\theta_0$ (rounded to two decimal places) at which $D_\phi(\Sc(\cdot) \mid q_0)$ reaches its minimum on $\mathring \Theta$ is $(0.06, 0.18, 0.08, 0.07)$. Since Condition~\ref{cond:phi:strong:convex} and $\lim_{x \to 0^+} \phi'(x) = -\infty$ hold for the Kullback--Leibler divergence (see Table~\ref{tab:phi}), we can directly apply Corollary~\ref{cor:diff:linear:ineq} and compute $J[\Sc^*](q_0)$ as well as the asymptotic covariance matrix $\Sigma = J[\Sc^*](q_0) \Sigma_{q_0} J[\Sc^*](q_0)^\top$. The rounded version of the latter as well as of its approximation $\Sigma_{n,N}$ are:
{\small \begin{align*}
\Sigma = \begin{bmatrix}
  0.041 & -0.013 & -0.028 & -0.026 & 0.009 & 0.017 & -0.014 & 0.004 & 0.011 \\ 
  -0.013 & 0.059 & -0.047 & 0.009 & -0.037 & 0.028 & 0.003 & -0.023 & 0.019 \\ 
  -0.028 & -0.047 & 0.075 & 0.017 & 0.028 & -0.045 & 0.011 & 0.019 & -0.030 \\ 
  -0.026 & 0.009 & 0.017 & 0.034 & -0.011 & -0.023 & -0.007 & 0.002 & 0.006 \\ 
  0.009 & -0.037 & 0.028 & -0.011 & 0.039 & -0.028 & 0.002 & -0.002 & 0.001 \\ 
  0.017 & 0.028 & -0.045 & -0.023 & -0.028 & 0.051 & 0.005 & 0.001 & -0.006 \\ 
  -0.014 & 0.003 & 0.011 & -0.007 & 0.002 & 0.005 & 0.022 & -0.005 & -0.017 \\ 
  0.004 & -0.023 & 0.019 & 0.002 & -0.002 & 0.001 & -0.005 & 0.025 & -0.020 \\ 
  0.011 & 0.019 & -0.030 & 0.006 & 0.001 & -0.006 & -0.017 & -0.020 & 0.036 \\ 
   \end{bmatrix}
,
\end{align*}
\begin{align*}
\Sigma_{n,N} = \begin{bmatrix}
  0.040 & -0.013 & -0.027 & -0.025 & 0.009 & 0.016 & -0.015 & 0.004 & 0.011 \\ 
  -0.013 & 0.059 & -0.046 & 0.010 & -0.037 & 0.027 & 0.003 & -0.022 & 0.019 \\ 
  -0.027 & -0.046 & 0.073 & 0.016 & 0.028 & -0.043 & 0.012 & 0.018 & -0.030 \\ 
  -0.025 & 0.010 & 0.016 & 0.033 & -0.011 & -0.022 & -0.007 & 0.001 & 0.006 \\ 
  0.009 & -0.037 & 0.028 & -0.011 & 0.039 & -0.028 & 0.002 & -0.002 & 0.000 \\ 
  0.016 & 0.027 & -0.043 & -0.022 & -0.028 & 0.050 & 0.006 & 0.001 & -0.006 \\ 
  -0.015 & 0.003 & 0.012 & -0.007 & 0.002 & 0.006 & 0.022 & -0.005 & -0.017 \\ 
  0.004 & -0.022 & 0.018 & 0.001 & -0.002 & 0.001 & -0.005 & 0.024 & -0.019 \\ 
  0.011 & 0.019 & -0.030 & 0.006 & 0.000 & -0.006 & -0.017 & -0.019 & 0.036 \\ 
   \end{bmatrix}
,
\end{align*}}
showing again very good agreement. 

Finally, let us briefly explain the slight difference that exists between the above application of Corollary~\ref{cor:diff:convex} to $I$-projections on Fréchet classes and the application of Proposition 2.5 of \cite{KojMar24} to a similar setting resulting in Proposition~4.3 in the same reference. Essentially, instead of obtaining differentiability results for the function $\Sc^*$ in~\eqref{eq:S:*:map} at $q_0$, \cite{KojMar24} provided differentiability results for the function
$$
(q_1,\dots,q_{rs-1}) \mapsto \Sc^*\left(q_1,\dots,q_{rs-1},1-\sum_{i=1}^{rs-1} q_i\right)
$$
at $q_0$. By the chain rule, the Jacobian of the latter function at $q_0$ is simply $J[\Sc^*](q_0) R$, where $J[\Sc^*](q_0)$ is given in Corollary~\ref{cor:diff:convex} and $R$ is the $rs \times (rs-1)$ matrix obtained by adding a row of -1's below the identity matrix $I_{rs-1}$.

\appendix


\bigskip
\section{Proofs of the results of Section~\ref{sec:prelim}}
\label{proofs:prelim}

\begin{proof}[\bf Proof of Proposition~\ref{prop:D:phi:strong:convex}]
  First, notice that the set $[0,1]^m_{\supp(t)}$ is indeed convex since any convex combination of elements of $[0,1]^m_{\supp(t)}$ is in $[0,1]^m$ and has support included in $\supp(t)$. Next, recall the definition of $f$ in~\eqref{eq:f} and let us first show that Condition~\ref{cond:phi:strong:convex} implies that, for any $w \in (0,\infty)$, $f(\cdot, w)$ is strongly convex on $[0,1]$. Fix $w \in (0,\infty)$ and let $\kappa_\phi(w)$ be the strong convexity constant of the restriction of $\phi$ to $[0,1/w]$. Then, for any $v,v' \in [0,1]$ and $\alpha \in [0,1]$,
\begin{align*}
  f(\alpha v + (1-\alpha) v', w) &= w \phi \left(\alpha \frac{v}{w} + (1-\alpha) \frac{v'}{w} \right) \\
                    &\leq w \alpha \phi \left(\frac{v}{w} \right) + w (1-\alpha) \phi \left( \frac{v'}{w} \right) - w \frac{\kappa_\phi(w)}{2} \alpha (1-\alpha) \left| \frac{v}{w}  - \frac{v'}{w} \right|^2 \\
                    &=  \alpha f(v,w) + (1-\alpha) f(v',w) - \frac{\kappa_\phi(w)}{2 w} \alpha (1-\alpha) | v - v' |^2.
\end{align*}
Hence, for any $w \in (0,\infty)$, $f(\cdot, w)$ is strongly convex on $[0,1]$ with strong convexity constant $\kappa_\phi(w)/w$.

Now, let $t \in [0,1]^m$, $t \neq 0_{\R^m}$, and notice that, for any $s \in [0,1]^m_{\supp(t)}$, all the summands in the expression of $D_\phi(s \mid t)$ in~\eqref{eq:phi:div} corresponding to the $t_i$'s that are not strictly positive are zero because of the definition of $f$ in~\eqref{eq:f}. As a consequence, without loss of generality, we can assume that $t \in (0,1]^m$. Hence, for any  $t \in (0,1]^m$, $s,s' \in [0,1]^m$ and $\alpha \in [0,1]$,
\begin{align*}
  D_\phi(\alpha s &+ (1-\alpha) s' \mid t) = \sum_{i=1}^m f(\alpha s_i + (1-\alpha) s'_i, t_i) \\
                                    &\leq \alpha \sum_{i=1}^m f(s_i, t_i) + (1-\alpha)   \sum_{i=1}^mf(s'_i, t_i) - \alpha (1-\alpha)  \sum_{i=1}^m \frac{\kappa_\phi(t_i)}{2 t_i}  | s_i - s'_i |^2 \\
                                    &\leq \alpha D_\phi(s \mid t) + (1-\alpha)   D_\phi(s' \mid t) -  \alpha (1-\alpha)  \| s - s' \|_2^2 \min_{i \in \{1,\dots,m\}} \frac{\kappa_\phi(t_i)}{2 t_i}.
\end{align*}
Hence, for any $t \in (0,1]^m$, the function $D_\phi(\cdot \mid t)$ is strongly convex on $[0,1]^m$ with strong convexity constant $\min_{i \in \{1,\dots,m\}} \kappa_\phi(t_i) / t_i$.
\end{proof}

\begin{proof}[\bf Proof of Lemma~\ref{lem:phi:strict:convex}]
  Let $x,y$ in $[0, \infty)$ such that $x \neq y$. Notice that any $w \leq 1 / \max \{x, y\}$ is such that $x$ and $y$ are in $[0, 1/w]$. Fix one such $w$ and let $\kappa_\phi(w)$ be the strong convexity constant of the restriction of $\phi$ to $[0,1/w]$. Then, for any $\alpha \in (0,1)$,
  $$
  \phi(\alpha x + (1-\alpha)y) \leq \alpha\phi(x) + (1-\alpha)\phi(y) - \frac{1}{2} \kappa_\phi(w) \alpha(1-\alpha) | x - y |^2 < \alpha\phi(x) + (1-\alpha)\phi(y),
  $$
  since $\frac{1}{2} \kappa_\phi(w) \alpha(1-\alpha) | x - y |^2 > 0$. As $x,y$ are arbitrary in $[0, \infty)$, this establishes the strict convexity of $\phi$. \qed
\end{proof}


\section{Proofs of the results of Section~\ref{sec:param}}
\label{proofs:param}

\begin{proof}[\bf Proof of Proposition~\ref{prop:cont:vartheta:t0}]
The proof is an adaption of the proof of Theorem~3.3~(iii) in \cite{GieRef17}. Let $(t_n)$ be a sequence in $[0,1]^m$ such that $\lim_{n \to \infty} t_n = t_0$. For $n$ large enough, $\theta_n = \vartheta^*(t_n)$ is well-defined via~\eqref{eq:theta:map} and $s_n = \Sc(\theta_n)$ is the $\phi$-projection of $t_n$ on $\Mc = \Sc(\bar \Theta)$. Since $\bar \Theta$ is a compact subset of $\R^k$, by the Bolzano--Weierstrass theorem, the sequence $(\theta_n)$ in $\bar \Theta$ has a convergent subsequence which converges to an element of $\bar \Theta$. Let $(\theta_{\eta_n})$ be such a subsequence and let $\lim_{n \to \infty} \theta_{\eta_n} = \theta^{**}$. By continuity of $\Sc$, we immediately obtain that $\lim_{n \to \infty} \Sc(\theta_{\eta_n}) = \Sc(\theta^{**})$. Then, using the fact that every subsequence of $(t_n)$ converges to $t_0$ and the lower semicontinuity of $D_\phi$ (see Proposition~\ref{prop:properties:D:phi}~(i)), we obtain that, for any $\theta \in \bar \Theta$,
  \begin{align*}
    D_\phi(\Sc(\theta^{**}) \mid t_0) &= D_\phi(\lim_{n \to \infty} \Sc(\theta_{\eta_n}) \mid \lim_{n \to \infty} t_n) = D_\phi(\lim_{n \to \infty} \Sc(\theta_{\eta_n}) \mid \lim_{n \to \infty} t_{\eta_n})\\
    &\leq \liminf_{n \to \infty} D_\phi(\Sc(\theta_{\eta_n}) \mid t_{\eta_n}) \leq \liminf_{n \to \infty} D_\phi(\Sc(\theta) \mid t_{\eta_n}),
  \end{align*}
  where we have used the fact that, for $n$ large enough, $D_\phi(\Sc(\theta_{\eta_n}) \mid t_{\eta_n}) \leq D_\phi(\Sc(\theta) \mid t_{\eta_n})$ since $\theta_{\eta_n} = \vartheta^*(t_{\eta_n})$. Finally, using the continuity of $D_\phi$ with respect to its second argument (see Proposition~\ref{prop:properties:D:phi}~(ii)), we obtain
  $$
  \liminf_{n \to \infty} D_\phi(\Sc(\theta) \mid t_{\eta_n}) = \lim_{n \to \infty} D_\phi(\Sc(\theta) \mid t_{\eta_n}) = D_\phi(\Sc(\theta) \mid t_0).
  $$
  In other words, we have shown that, for any $\theta \in \bar \Theta$, $D_\phi(\Sc(\theta^{**}) \mid t_0) \leq D_\phi(\Sc(\theta) \mid t_0)$, which, since $\Sc(\vartheta^*(t_0))$ is the unique $\phi$-projection of $t_0$ on $\Mc = \Sc(\bar \Theta)$ by Condition~\ref{cond:unicity:t0}, implies that $\Sc(\theta^{**}) = \Sc(\vartheta^*(t_0))$, and, by the injectivity of $\Sc$, that $\theta^{**} = \vartheta^*(t_0)$. Hence, $\lim_{n \to \infty} \theta_{\eta_n} = \lim_{n \to \infty} \vartheta^*(t_{\eta_n}) = \vartheta^*(t_0)$. Reasoning by contraposition, this implies that $\vartheta^*$ is continuous at $t_0$. \qed
\end{proof}

\begin{proof}[\bf Proof of Lemma~\ref{lem:J2}]
Let us first state a formula that we will need later in the proof. Having in mind the definitions given above Lemma~\ref{lem:J2}, let $g : U \rightarrow \R^{b \times c} $ and $h : U \rightarrow \R^{c \times d}$, where $U$ is an open subset of $\R^a$, be continuously differentiable on $U$. Then, the product function $gh$ given by $(gh)(x) = g(x) h(x)$, $x \in U$, is well-defined and, using for instance Theorem T4.3 in Table IV of \cite{Bre78}, we have
\begin{equation}
  \label{eq:J:prod}
J[gh](x) = \left( h(x)^\top \otimes I_b \right) J[g](x) + \left( I_d \otimes g(x) \right) J[h](x), \qquad x \in U.
\end{equation}

We shall now proceed with the proof. From~\eqref{eq:f} and~\eqref{eq:phi:div}, for any $t \in (0,1)^m$ and any $s \in [0,1]^m$,
$$
D_\phi(s \mid t) = \sum_{i=1}^m t_i \phi \left( \frac{s_i}{t_i} \right),
$$
implying that, under Condition~\ref{cond:phi:diff}, for any $t \in (0,1)^m$, $D_\phi(\cdot \mid t)$ is twice continuously differentiable on $(0,1)^m$. Then, since $\Sc$ is assumed to be twice continuously differentiable on $\mathring \Theta$ and $\Sc(\mathring \Theta) \subset (0,1)^m$ by Condition~\ref{cond:M}, for any $t \in (0,1)^m$, $D_\phi(\Sc(\cdot) \mid t)$ is twice continuously differentiable on $\mathring \Theta$. Next, fix $t \in (0,1)^m$ and consider the function from  $(0,1)^m$ to $\R^m$ defined by
\begin{equation*}
  \Phi_t'(s) = J[D_\phi(\cdot \mid t)](s) = \left(\phi' \left( \frac{s_1}{t_1} \right), \dots, \phi' \left( \frac{s_m}{t_m} \right) \right), \qquad s \in (0,1)^m.
\end{equation*}
Then, by the chain rule, for any $\theta \in \mathring \Theta$, 
\begin{align*}
  J[D_\phi(\Sc(\cdot) \mid t)](\theta) &= J[D_\phi(\cdot \mid t)](\Sc(\theta)) J[\Sc](\theta) =  \Phi'_t(\Sc(\theta))^\top  J[\Sc](\theta).
\end{align*} 
Note in passing that
$$
J[\Phi_t'](s) = J_2[D_\phi(\cdot \mid t)](s) = \diag \left(\frac{1}{t_1}\phi'' \left( \frac{s_1}{t_1} \right), \dots, \frac{1}{t_m}\phi'' \left( \frac{s_m}{t_m} \right) \right), \qquad s \in (0,1)^m.
$$
Finally, from~\eqref{eq:J:prod} and the chain rule, for any $\theta \in \mathring \Theta$,
\begin{align*}
  J_2[D_\phi(\Sc(\cdot) \mid t)](\theta) =& J \big[ J[D_\phi(\Sc(\cdot) \mid t)] \big](\theta) = J \big[ \Phi'_t(\Sc(\cdot))^\top  J[\Sc] \big](\theta) \\
  =&  \left( J[\Sc](\theta)^\top \otimes I_1 \right) J \left[ \Phi'_t(\Sc(\cdot)) \right](\theta) + \left( I_k \otimes \Phi'_t(\Sc(\theta))^\top \right) J_2[\Sc] (\theta) \\
  =&  J[\Sc](\theta)^\top J \left[ \Phi'_t \right](\Sc(\theta)) J[\Sc](\theta) + \left( I_k \otimes \Phi'_t(\Sc(\theta))^\top \right) J_2[\Sc] (\theta) \\
                                            =&  J[\Sc](\theta)^\top  \diag \left(\frac{1}{t_1}\phi'' \left( \frac{\Sc_1(\theta)}{t_1} \right), \dots, \frac{1}{t_m}\phi'' \left( \frac{\Sc_m(\theta)}{t_m} \right)  \right) J[\Sc](\theta) \\
                                          &+ \left( I_k \otimes \left(\phi' \left( \frac{\Sc_1(\theta)}{t_1} \right), \dots, \phi' \left( \frac{\Sc_m(\theta)}{t_m} \right) \right)^\top \right) J_2[\Sc] (\theta).
\end{align*}
\qed
\end{proof}

\begin{proof}[\bf Proof of Theorem~\ref{thm:diff:vartheta}]
As verified in the proof of Lemma~\ref{lem:J2}, under the conditions considered from Section~\ref{sec:diff} onwards, we have that, for any $t \in (0,1)^m$, the function $D_\phi(\Sc(\cdot) \mid t)$ is twice continuously differentiable on $\mathring \Theta$.  Next, let $F$ be the function from $(0,1)^m \times \mathring \Theta$ to $\R^k$ defined by $F(t, \theta) = J[D_\phi(\Sc(\cdot) \mid  t )](\theta)$, $t \in (0,1)^m$, $\theta \in \mathring \Theta$. From the chain rule, we have that, for any $t \in (0,1)^m$ and $\theta \in \mathring \Theta$,
\begin{equation}
  \label{eq:F}
  F(t, \theta) = J[D_\phi(\cdot \mid t)](\Sc(\theta)) J[\Sc](\theta) =  \left(\phi' \left( \frac{\Sc_1(\theta)}{t_1} \right), \dots, \phi' \left( \frac{\Sc_m(\theta)}{t_m} \right) \right)^\top  J[\Sc](\theta),
\end{equation}
implying that $F$ is continuously differentiable on the open subset $(0,1)^m \times \mathring \Theta$ of $\R^{m+k}$.

From Condition~\ref{cond:unicity:interior:t0}, for any $t \in \Nc(t_0)$, the function $D_\phi(\Sc(\cdot) \mid t)$ from $\mathring \Theta$ to $\R$ reaches its unique minimum at $\vartheta^*(t) \in \mathring \Theta$, where the function $\vartheta^*$ is defined in~\eqref{eq:theta:map:2}. First order necessary optimality conditions then imply that
\begin{equation}
  \label{eq:first:order}
  J[D_\phi(\Sc(\cdot) \mid  t )](\vartheta^*(t)) = F(t, \vartheta^*(t)) = 0_{\R^k} \quad \text{for all } t \in \Nc(t_0).
\end{equation}

Since $F(t_0, \vartheta^*(t_0)) = 0_{\R^k}$ by~\eqref{eq:first:order} and $J_2[D_\phi(\Sc(\cdot) \mid  t_0 )](\vartheta^*(t_0)) = J[F(t_0, \cdot)](\vartheta^*(t_0))$ is invertible by Condition~\ref{cond:invertibility}, we can apply the implicit function theorem \citep[see, e.g.,][Theorem 17.6, p 450]{Fit09} to obtain that there exists $r \in (0,\infty)$, an open ball $\Bc \subset (0,1)^m$ of radius $r$ centered at $t_0$ and a continuously differentiable function $g:\Bc \to \R^k$ such that
$$
F(t, g(t)) = 0_{\R^k} \qquad \text{for all } t \in \Bc,
$$
and, whenever $\|t - t_0 \| < r$, $\| \theta - \vartheta^*(t_0) \| < r$ and $F(t, \theta) = 0$, then $\theta = g(t)$. Moreover,
$$
J[F(\cdot,g(t)](t)  + J[F(t,\cdot)](g(t)) J[g](t) = 0_{\R^k} \qquad \text{for all } t \in \Bc.
$$
Now, under Condition~\ref{cond:unicity:interior:t0}, Proposition~\ref{prop:cont:vartheta:t0} states that the function $\vartheta^*$ in~\eqref{eq:theta:map:2} is continuous at~$t_0$. Hence, there exists an open neighborhood $\Bc' \subset \Bc$ of $t_0$ such that, for any $t \in \Bc'$,  $\| \vartheta^*(t) - \vartheta^*(t_0) \|  < r$. Besides, by~\eqref{eq:first:order}, $F(t, \vartheta^*(t)) = 0_{\R^k}$ for all $t \in \Bc' \cap \Nc(t_0)$. Therefore, by the implicit function theorem, for any $t \in \Bc' \cap \Nc(t_0)$, $\vartheta^*(t) = g(t)$ and thus $\vartheta^*$ is continuously differentiable at $t_0$ with Jacobian matrix at $t_0$ given by
$$
J[\vartheta^*](t_0) = - J[F(t_0,\cdot)](\vartheta^*(t_0))^{-1} J[F(\cdot,\vartheta^*(t_0)](t_0).
$$
Consider the continuously differentiable function from $(0,1)^m$ to $\R^m$ defined by
$$
\Psi'(t) = \left(\phi' \left( \frac{\Sc_1(\vartheta^*(t_0))}{t_1} \right), \dots, \phi' \left( \frac{\Sc_m(\vartheta^*(t_0))}{t_m} \right) \right), \qquad t \in (0,1)^m,
$$
and note that $J[\Psi'](t_0)$ is equal to $-\Delta(t_0)$, where $\Delta(t_0)$ is given in~\eqref{eq:Delta:t0}. The expression in~\eqref{eq:J:vartheta:map} finally follows from the fact that $J[F(t_0, \cdot)](\vartheta^*(t_0)) = J_2[D_\phi(\Sc(\cdot) \mid  t_0 )](\vartheta^*(t_0))$ and the fact that, by~\eqref{eq:J:prod} and~\eqref{eq:F},
\begin{align*}
  J[F(\cdot,\vartheta^*(t_0)](t_0) &= J \big[ \Psi'(\cdot)^\top  J[\Sc](\vartheta^*(t_0)) \big](t_0) = \big( J[\Sc](\vartheta^*(t_0))^\top \otimes I_1 \big) J[\Psi'](t_0).
\end{align*}
\qed
\end{proof}

\begin{proof}[\bf Proof of Lemma~\ref{lem:cond:unicity:interior}]
  From Conditions~\ref{cond:unicity:t0} and~\ref{cond:support}, there exists an open neighborhood $\Nc(t_0) \subset (0,1)^m$ of $t_0$ such that, for any $t \in \Nc(t_0)$, the $\phi$-projection $s^*$ of $t$ on $\Mc$ exists, is unique and belongs to $(0,1)^m$. Furthermore, Condition~\ref{cond:M} implies that the function $\Sc$ is a bijection between $\bar \Theta$ and $\Mc$. Under Condition~\ref{cond:interior}, we immediately obtain that it is also a bijection between $\mathring \Theta$ and $\Mc \cap (0,1)^m$. Hence, for any $t \in \Nc(t_0)$, the unique $\phi$-projection $s^*= \Sc^*(t) \in \Mc \cap (0,1)^m$ satisfies $s^* = \Sc(\theta^*)$ for some (unique) $\theta^* \in \mathring \Theta$. \qed
\end{proof}


\section{Proofs of the results of Section~\ref{sec:convex}}
\label{proofs:convex}

\begin{proof}[\bf Proof of Proposition~\ref{prop:cond:unicity:t0}] 
  First recall that, by continuity of $\Sc$ and compactness of $\bar \Theta$, $\Mc = \Sc(\bar \Theta)$ is a compact subset of $[0,1]^m$. Note also that, for any $t \in (0,1)^m$, $[0,1]^m_{\supp(t)} = [0,1]^m$. Then, by Proposition~\ref{prop:existence:phi:proj} and, since Condition~\ref{cond:phi:strict:convex} holds, by Proposition~\ref{prop:phi:proj:convex}~(i), for any $t \in (0,1)^m$, the $\phi$-projection $s^*$ of $t$ on $\Mc = \Sc(\bar \Theta)$ exists and is unique (which implies that Condition~\ref{cond:unicity:t0} holds). \qed
\end{proof}

\begin{proof}[\bf Proof of Proposition~\ref{prop:S:strong:invert}]
  Fix $t \in (0,1)^m$. As already verified in the proof of Lemma~\ref{lem:J2}, the function $D_\phi(\Sc(\cdot) \mid t)$ is twice continuously differentiable on $\mathring \Theta$. From Theorem~2.1.11 in \cite{Nes04}, we have that the strong convexity of the function $D_\phi(\Sc(\cdot) \mid t)$ from $\bar \Theta$ to $\R$  is equivalent to the fact that, for any $\theta \in \mathring \Theta$, $J_2[D_\phi(\Sc(\cdot) \mid t)](\theta) - \kappa(t) I_k$ is positive semi-definite, where $\kappa(t)$ is the strong convexity constant of the function $D_\phi(\Sc(\cdot) \mid t)$. This then immediately implies that, for any $t \in (0,1)^m$ and $\theta \in \mathring \Theta$, $J_2[D_\phi(\Sc(\cdot) \mid t)](\theta)$ is positive definite. Since, under Condition~\ref{cond:unicity:interior:t0}, $\vartheta^*$ can be expressed as in~\eqref{eq:theta:map:2}, we immediately obtain $J_2[D_\phi(\Sc(\cdot) \mid t_0)](\vartheta^*(t_0))$ is positive definite, that is, that Condition~\ref{cond:invertibility} holds. \qed
\end{proof}

\begin{proof}[\bf Proof of Lemma~\ref{lem:cond:S}]
From Condition~\ref{cond:S:affine}, for any $\theta,\theta' \in \bar \Theta$ and $\alpha \in [0,1]$,
\begin{align*}
  \Sc(\alpha \theta + (1-\alpha) \theta') &=  A (\alpha \theta + (1-\alpha) \theta') + \gamma = \alpha \Sc(\theta) + (1-\alpha) \Sc(\theta').
\end{align*}
Next, since Condition~\ref{cond:phi:strong:convex} holds, from Proposition~\ref{prop:D:phi:strong:convex}, for any $\theta,\theta' \in \bar \Theta$, $t \in (0,1)^m$ and $\alpha \in [0,1]$,
\begin{align*}
  D_\phi(\Sc(\alpha \theta &+ (1-\alpha) \theta') \mid t) = D_\phi(\alpha \Sc(\theta) + (1-\alpha) \Sc(\theta') \mid t) \\
                    &\leq \alpha D_\phi(\Sc(\theta) \mid t) + (1-\alpha) D_\phi(\Sc(\theta') \mid t ) - \frac{\kappa_{D_\phi(\cdot \mid t)}}{2} \alpha (1-\alpha)  \| \Sc(\theta) - \Sc(\theta') \|_2^2 \\
                    &= \alpha D_\phi(\Sc(\theta) \mid t) + (1-\alpha) D_\phi(\Sc(\theta') \mid t) - \frac{\kappa_{D_\phi(\cdot \mid t)}}{2} \alpha (1-\alpha)  \| A(\theta - \theta') \|_2^2 \\
                    &\leq \alpha D_\phi(\Sc(\theta) \mid t) + (1-\alpha) D_\phi(\Sc(\theta') \mid t) - \frac{\kappa_{D_\phi(\cdot \mid t)}}{2}  \alpha (1-\alpha)  c \| \theta - \theta' \|_2^2,
\end{align*}
where $\kappa_{D_\phi(\cdot \mid t)}$ is the strong convexity constant of the function $D_\phi(\cdot \mid t)$ and $c$ is a constant that depends on $A$ such that, for any $x \in \R^k$, $\| Ax \|_2^2 \geq c \| x \|_2^2$. To prove that, for any $t \in (0,1)^m$, the function $D_\phi(\Sc(\cdot) \mid t)$ from $\bar \Theta$ to $\R$ is strongly convex, it remains to show that the constant $c$ can be chosen strictly positive. Let $S^{k-1} = \{ x \in \R^k : \| x \|_2 = 1\}$ be the unit sphere in $\R^k$ and let $c = \min_{x \in S^{k-1}} \| Ax \|_2^2$. Then $c \geq 0$ and, as expected, for any $x \in \R^k$,
$$
\| Ax \|_2^2 =  \| x \|_2^2 \left\| A \frac{x}{\| x \|_2} \right\|_2^2  \geq c  \| x \|_2^2.
$$
To prove that $c > 0$, we proceed by contradiction. Assume that $c = 0$. Then, there would exist $x \in S^{k-1}$ such that $Ax = 0$. Since $\Sc$ is assumed to be affine and injective, $x \mapsto Ax$ is a linear injective function and $Ax = 0$ implies that $x = 0_{\R^k}$. This is a contradiction since $0_{\R^k} \not \in S^{k-1}$. Hence, for any $t \in (0,1)^m$, the function $D_\phi(\Sc(\cdot) \mid t)$ from $\Theta$ to $\R$ is strongly convex. \qed
\end{proof}

\begin{proof}[\bf Proof of Corollary~\ref{cor:diff:convex}]
  To show the result, it suffices to verify that the conditions of Theorem~\ref{thm:diff:vartheta} and Corollary~\ref{cor:diff:S:*}, namely Conditions~\ref{cond:unicity:interior:t0} and~\ref{cond:invertibility}, are satisfied. From Proposition~\ref{prop:cond:unicity:t0}, we know that Condition~\ref{cond:unicity:t0} holds. Lemma~\ref{lem:cond:unicity:interior} then implies that Condition~\ref{cond:unicity:interior:t0} holds since Conditions~\ref{cond:interior} and~\ref{cond:support} are assumed to be satisfied. From Lemma~\ref{lem:cond:S}, we have that Conditions~\ref{cond:phi:strong:convex} and~\ref{cond:S:affine} imply Condition~\ref{cond:S:strong}, which implies Condition~\ref{cond:invertibility} by Proposition~\ref{prop:S:strong:invert}. \qed
\end{proof}

\begin{proof}[\bf Proof of Proposition~\ref{prop:cond:support}]
  Recall that the fact that, for any probability vector $q \in (0,1)^m$, the $\phi$-projection $p^*$ of $q$ on $\Mc = \Sc(\bar \Theta)$ exists and is unique follows from Proposition~\ref{prop:cond:unicity:t0}. Note also that the function $\Sc$ defined in Condition~\ref{cond:M} is a bijection between $\bar \Theta$ and $\Mc = \Sc(\bar \Theta)$. When it is affine (that is, under Condition~\ref{cond:S:affine}), an expression of its inverse is given by Lemma~\ref{lem:S:inv} below. Specifically, we then have that
  \begin{equation*}
    \Sc^{-1}(p) = (A^\top A)^{-1}A^\top (p - \gamma), \qquad p \in \Mc.
  \end{equation*}
  This implies that the set $\Mc$ can be equivalently expressed as
  $$
  \Mc = \left\{ p \in [0,1]^m : \sum_{i=1}^m p_i = 1,  (A^\top A)^{-1}A^\top (p - \gamma) \in \bar \Theta \right\},
  $$
  and, since $\Theta$ is defined by linear inequalities, that $\Mc$ is defined by linear inequalities. Some thought then reveals that there exists a $d \in \N^+$, $\alpha_1, \dots, \alpha_d \in \R$ and $d$ functions $g_1, g_2, \dots, g_d$ from $\{1,\dots,m\}$ to $\R$ such that
  $$
  \Mc = \left\{ p \in [0,1]^m : \sum_{i = 1}^m p_i  = 1, \sum_{i = 1}^m p_i g_j(i) \geq \alpha_j, j \in \{1, \dots, d\} \right\}.
  $$
  Setting $f_j(i) = g_j(i) - \alpha_j$ for $i \in \{1, \dots, m\}$ and $j \in \{1, \dots, d\}$, it is easy to verify that $\Mc$ can be equivalently rewritten as 
  $$
  \Mc = \left\{ p \in [0,1]^m : \sum_{i = 1}^m p_i  = 1, \sum_{i = 1}^m p_i f_j(i) \geq 0, j \in \{1, \dots, d\} \right\}.
  $$
  Theorem~2~(c) of \cite{Rus87} and the remark following it then imply that, for any probability vector $q \in (0,1)^m$, there exist $c = (c_0, \dots, c_d) \in \mathbb{R}^{d + 1}$ such that, for any $i \in \{1,\dots,m\}$,
  $$
  p_i^* = q_i \, \phi'^{-1}\left( c_0 + \sum_{j = 1}^d c_j f_j(i) \right),
  $$
  where $p^* = \Sc^*(q)$ is the $\phi$-projection of $q$ on $\Mc$. If $\lim_{x \to 0^+} \phi'(x) = - \infty$, $\phi'^{-1}(x) > 0$ for all $x \in (0,\infty)$ and the previous centered display immediately implies that $\Sc^*(q) \in (0,1)^m$ whenever $q \in (0,1)^m$. \qed
\end{proof}

The following lemma is necessary for proving Proposition~\ref{prop:cond:support}. 

\begin{lem}
  \label{lem:S:inv}
  Under Condition~\ref{cond:S:affine}, the inverse $\Sc^{-1}$ of $\Sc : \bar \Theta \rightarrow \Mc$ can be expressed as
  \begin{equation}
    \label{eq:S:inv:pseudo}
    \Sc^{-1}(s) = (A^\top A)^{-1}A^\top (s - \gamma), \qquad s \in \Mc.
  \end{equation}
\end{lem}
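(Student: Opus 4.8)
The plan is to leverage the injectivity of $\Sc$ together with its affine form so as to turn the statement into a routine left-inverse identity. First I would record that, under Condition~\ref{cond:S:affine}, $\Sc(\theta) = A \theta + \gamma$ for some $m \times k$ matrix $A$ and some $\gamma \in \R^m$, and that Condition~\ref{cond:M} requires $\Sc$ to be injective with $k \leq m$. Because $\Sc$ is affine, its injectivity is equivalent to that of the linear map $x \mapsto A x$: the identity $\Sc(\theta) = \Sc(\theta')$ reduces to $A \theta = A \theta'$, so $A$ has trivial kernel and hence full column rank $k$.

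The one substantive step is then to argue that $A^\top A$ is invertible. This is immediate from the full column rank of $A$, since for every $x \in \R^k$ we have $x^\top A^\top A x = \| A x \|_2^2$, which vanishes only when $A x = 0$, that is, only when $x = 0_{\R^k}$; thus the symmetric $k \times k$ matrix $A^\top A$ is positive definite and therefore invertible. I note that this is exactly the positivity fact already exploited in the proof of Lemma~\ref{lem:cond:S}.

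With the invertibility of $A^\top A$ in hand, the formula follows by direct substitution. For $s \in \Mc = \Sc(\bar \Theta)$, I would take its unique preimage $\theta = \Sc^{-1}(s) \in \bar \Theta$, so that $s - \gamma = A \theta$, and then compute
\begin{equation*}
(A^\top A)^{-1} A^\top (s - \gamma) = (A^\top A)^{-1} (A^\top A) \theta = \theta = \Sc^{-1}(s),
\end{equation*}
which is precisely~\eqref{eq:S:inv:pseudo}. The only point requiring any care is the invertibility of $A^\top A$, which rests on the interplay between injectivity and affineness; everything else is a one-line algebraic verification, so I do not anticipate a genuine obstacle here.
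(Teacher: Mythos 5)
Your proof is correct and follows essentially the same route as the paper's: both rest on the observation that injectivity of the affine map $\Sc$ forces $A$ to have full column rank, hence $A^\top A$ is invertible, after which the formula is verified by direct substitution. The only difference is that you check just the left-inverse identity $(A^\top A)^{-1}A^\top(\Sc(\theta)-\gamma)=\theta$, which indeed suffices since $\Sc$ is already known to be a bijection onto $\Mc$, whereas the paper additionally verifies $\Sc(\Sc^{-1}(s))=s$ via the orthogonal projection identity $A(A^\top A)^{-1}A^\top y = y$ for $y \in \im(A)$ --- a step your argument renders unnecessary.
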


\begin{proof}
  Since $A$ is of full rank because of the injectivity of $\Sc$, we know, for instance from \citet[Theorem 5, p 48]{BenGre03}, that $A^\top A$ is invertible. Moreover, for instance from expression~(6.13) in \cite{TreBau97}, we have that the orthogonal projection of $x \in \R^m$ on $\im(A) = \{ y \in \R^m : y = Az, z \in \R^k \}$ can be computed as $A (A^\top A)^{-1} A^\top x$. This implies that
  \begin{equation}
    \label{eq:S:inv:pseudo:I2}
    A(A^\top A)^{-1}A^\top y = y, \qquad \text{for all } y \in \im(A).
  \end{equation}
  We shall use these facts to prove that~\eqref{eq:S:inv:pseudo} provides an expression of the inverse of the function $\Sc: \bar \Theta \rightarrow \Mc$. 
  Let $\theta \in \bar \Theta$. From the invertibility of $A^\top A$, we have that
  $$
  \Sc^{-1}( \Sc (\theta) ) = \Sc^{-1}(A\theta + \gamma) = (A^\top A)^{-1}A^\top A\theta = \theta.
  $$
  Next, let $s \in \Mc$. Then, using~\eqref{eq:S:inv:pseudo:I2},
  \begin{align*}
    \Sc( \Sc^{-1} (s) ) &= \Sc( (A^\top A)^{-1}A^\top (s - \gamma) ) = A (A^\top A)^{-1}A^\top (s - \gamma) + \gamma  = (s - \gamma) + \gamma = s,
  \end{align*}
  since $s - \gamma$ belongs to $\im(A)$. \qed
\end{proof}

\begin{proof}[\bf Proof of Corollary~\ref{cor:diff:linear:ineq}]
  To show the result, we shall check that the conditions of Corollary~\ref{cor:diff:convex} are satisfied. It thus just remains to verify Condition~\ref{cond:support}. The latter follows from Proposition~\ref{prop:cond:support} since attention is restricted to probability vectors. \qed
\end{proof}

\begin{proof}[\bf Proof of Proposition~\ref{prop:linear:eq}]
  The fact that $\Mc$ is defined by linear constraints is equivalent to the existence of a $d \times m$ matrix $B$ such that $\Mc = \left\{ s \in [0,1]^m : Bs = \alpha \right\}$. Because $\Mc$ is convex and not reduced to a singleton, it contains an infinity of elements. Since $\Mc \cap (0,1)^m$ is nonempty, we can choose $s_0 \in \Mc \cap (0,1)^m$. Then, for any $s \in \Mc$, $s - s_0 \in \ker{(B)}$, where $\ker{(B)} = \{ x \in \R^m : Bx = 0_{\R^d}\}$. Since $\ker{(B)} \neq \{0_{\R^m}\}$, there exists a strictly positive integer $k < m$ and $\delta_1, \dots, \delta_k \in \R^m$ that form a basis of $\ker{(B)}$. Let $A$ be the matrix whose column vectors are $\delta_1, \dots, \delta_k$ and let
  \begin{equation}
    \label{eq:linconstr:Theta}
    \Theta = \left\{ \theta \in \R^k : 0_{\R^m} \leq s_0 + A\theta \leq 1_{\R^m} \right\}.
  \end{equation}
  Since, for any $s \in \Mc$ (resp.\ $\Mc \cap (0,1)^m$), $s - s_0 \in \ker{(B)}$, there exists a unique $\theta \in \R^k$ such that $s - s_0 = A\theta$ and, as $s = s_0 + A\theta$ belongs to $[0,1]^m$ (resp.\ $(0,1)^m$), $\theta$ necessarily belongs to $\bar \Theta$ (resp.\ $\mathring \Theta$). In other words, 
  \begin{equation}
    \label{eq:Mc:repr}
    \forall \, s \in \Mc \text{ (resp.\ $\Mc \cap (0,1)^m$)}, \text{ there exists a unique } \theta \in \bar \Theta \text{ (resp.\ $\mathring \Theta$)} \text{ s.t.\ } s = s_0 + A \theta.
  \end{equation}

  To complete the proof, we shall show that Conditions~\ref{cond:M},~\ref{cond:interior} and~\ref{cond:S:affine} are satisfied for $\Theta$ in~\eqref{eq:linconstr:Theta} and the function $\Sc$ defined by $\Sc(\theta) = A\theta + s_0$, $\theta \in \bar\Theta$. Condition~\ref{cond:S:affine} clearly holds since $\bar \Theta$ is convex and $\Sc$ has the right form with $\gamma = s_0$. Let us next verify that Condition~\ref{cond:M} is satisfied. Note that $\Theta$ is clearly bounded. Furthermore, from~\eqref{eq:linconstr:Theta}, the fact that $s_0 \in \Mc \cap (0,1)^m$ is equivalent to $0_{\R^m} < s_0 + A0_{\R^k}  < 1_{\R^m}$ implies that $0_{\R^k} \in \mathring \Theta$ and thus that $\mathring \Theta \neq \emptyset$. The fact that $\Sc$ is a bijection between $\bar \Theta$ and~$\Mc$ follows, on one hand, from the fact that, for any $\theta \in \bar \Theta$, $\Sc(\theta) \in \Mc$ (since $B(A\theta+s_0) = BA\theta+ Bs_0= 0_{\R^d} + \alpha$) and, on the other hand, from \eqref{eq:Mc:repr} (since, for any $s \in \Mc$, there exists a unique $\theta \in \bar \Theta$ such that $\Sc(\theta) = s$). 
  In a similar way, Condition~\ref{cond:interior} is a consequence of the fact that, for any $\theta \in \mathring \Theta$, $\Sc(\theta) \in \Mc \cap (0,1)^m$ and~\eqref{eq:Mc:repr}.
\end{proof}

\change{
\section{Proofs of the results of Section \ref{sec:app}}
\label{proofs:IF}

\begin{proof}[\bf Proof of Corollary~\ref{cor:IF}]
  Fix $j \in \{1,\dots,m\}$. Computing~\eqref{eq:infl} amounts to computing $J[\vartheta^* \circ q^{[j]}_{0}](0)$, the Jacobian matrix of $\vartheta^* \circ q^{[j]}_{0}$ at 0. Clearly, $q_0^{[j]}$ in~\eqref{eq:tj} is differentiable at $0$ with Jacobian matrix at $0$ given by $(e_j - q_0)$. Under Condition~\ref{cond:unicity:interior:t0} with $t_0 = q_0$, $\vartheta^*$ can be expressed as in~\eqref{eq:theta:map:2} with $t_0 = q_0$. Assuming additionally Condition~\ref{cond:invertibility} with $t_0 = q_0$, from Theorem~\ref{thm:diff:vartheta}, $\vartheta^*$ is differentiable at $q_0^{[j]}(0) = q_0$. Its Jacobian matrix at $q_0$, $J[\vartheta^*](q_0)$, is given in~\eqref{eq:J:vartheta:map} with $t_0 = q_0$. By the chain rule \cite[see, for istance,][Theorem 15.39]{Fit09}, $\vartheta^* \circ q^{[j]}_{0}$ is differentiable at $0$ and its Jacobian at $0$ is given by
  $$
  J[\vartheta^* \circ q^{[j]}_{0}](0) = J[\vartheta^*](q^{[j]}_{0}(0)) J[q^{[j]}_{0}](0) = J[\vartheta^*](q_0) (e_j - q_0).
  $$
\end{proof}

}

\change{
\begin{acknowledgements}
The authors would like to thank anonymous Referees for their constructive comments on an earlier version of this manuscript.
\end{acknowledgements}
}

\bibliographystyle{chicago}
\bibliography{biblio}

\end{document}